\documentclass[12pt]{amsart}

\title[Hyperbolic contact symplectic lifts]{Hyperbolic contact symplectic lifts}
\author[F. Bracci]{Filippo Bracci}
\author[B. McKay]{Benjamin McKay}
\author[R. Ugolini]{Riccardo Ugolini}

\usepackage[utf8]{inputenc}
\usepackage[T1]{fontenc}
\usepackage{times}
\usepackage{textcomp}
\usepackage{mathtext}
\usepackage[pdftex,pagebackref]{hyperref}
\hypersetup{
colorlinks   = true,  
urlcolor     = black, 
linkcolor    = black, 
citecolor    = black  
}
\usepackage[kerning=true,tracking=true]{microtype}
\usepackage{url}
\usepackage{amsmath}
\usepackage{amssymb}
\usepackage{amsthm}
\usepackage{mathrsfs}
\usepackage{mathtools}
\usepackage{varioref}
\usepackage{amssymb}
\usepackage{amsmath}
\usepackage{comment}
\usepackage{enumitem}
\usepackage{tcolorbox}

\usepackage{mathtools}
\DeclarePairedDelimiter{\SET}{\lbrace}{\rbrace}

\DeclareDocumentCommand{\set}{ o m }
  { \IfNoValueTF{#1}
    { \SET*{\,#2\,} }
    { \SET*{\,#1\,:\,#2\,} }
  }

\usepackage{tikz-cd}
\usetikzlibrary{
arrows.meta,
calc,
fit, 
positioning, 
shapes.geometric,
shapes.multipart
}
\setlength{\textheight}{20cm} \textwidth16cm \hoffset=-2truecm
\numberwithin{equation}{section}

\NewDocumentCommand\ip{E{_}{{}}m}%
{%
\iota_{#1}{#2}%
}%

\newcommand{\dimR}{\dim_{\R}}
\newcommand{\dimC}{\dim_{\C}}
\renewcommand{\Re}{\operatorname{Re}}

\newcommand{\LieG}{\mathfrak{g}}

\newcommand{\LieL}{\mathfrak{l}}
\newcommand{\LieN}{\mathfrak{n}}
\DeclareMathOperator\Ad{Ad}
\newcommand{\R}{\mathbb R}
\newcommand{\C}{\mathbb C}
\newcommand{\B}{\mathbb B}
\DeclareMathOperator{\Aut}{Aut}
\newcommand{\D}{\mathbb D}
\newcommand{\N}{\mathbb N}
\newcommand{\LieDer}{{\mathcal L}}
\let\caron=\v
\DeclareMathOperator{\PSU}{PSU}
\DeclareMathOperator{\Arg}{Arg}
\def\Ha{\mathbb H}
\usepackage{newunicodechar}
\DeclareUnicodeCharacter{010D}{\caron{c}}
\newunicodechar{č}{\caron{c}}
\DeclareUnicodeCharacter{00E0}{\` a}

\NewDocumentCommand\ala{m}{\emph{à la} #1} 

\NewDocumentCommand\ie{}{i.e.\@ }
\NewDocumentCommand\eg{}{e.g.\@ }

\newtheorem{theorem}{Theorem}[section]
\newtheorem{lemma}[theorem]{Lemma}
\newtheorem{proposition}[theorem]{Proposition}
\newtheorem{corollary}[theorem]{Corollary}

\theoremstyle{definition}
\newtheorem{definition}[theorem]{Definition}
\newtheorem{example}[theorem]{Example}

\theoremstyle{remark}
\newtheorem{remark}[theorem]{Remark}
\numberwithin{equation}{section}

\makeatletter
\@namedef{subjclassname@2020}{\textup{2020} Mathematics Subject Classification}
\makeatother

\newlist{ProofStepList}{enumerate}{1}
\setlist[ProofStepList]{label=\arabic*. ,ref=\arabic*}

\NewDocumentEnvironment{ProofSteps}{}%
{%
\begin{ProofStepList}%
}%
{%
\end{ProofStepList}%
}%
\NewDocumentCommand\ProofStep{om}%
{%
\IfValueTF{#1}%
{
\item\label{#1}{\emph{#2}}\\
}
{
\item{\emph{#2}}\\
}
}%

\NewDocumentCommand\cls{m}{\ensuremath{[\![#1]\!]}}

\NewDocumentCommand\dotIndex{}{\scalebox{.5}{$\bullet$}}
\NewDocumentCommand\PSL{m}{\ensuremath{\mathbb{P}\!\operatorname{SL}_{#1}}}
\NewDocumentCommand\RP{m}{\ensuremath{\mathbb{RP}^{#1}}}

\date\today

\address{Filippo Bracci\newline
Dipartimento di Matematica e Informatica ``Ulisse Dini''\\
Università di Firenze\\
Viale Morgagni 67/a\\
50134 Firenze\\
Italy}
\email{filippo.bracci@unifi.it}
\thanks{
The first author is partially supported by GNSAGA of INdAM.
The third author was supported in part by the European Union (ERC Advanced grant HPDR, 101053085 to F.~Forstnerič).
This article is based upon work from COST Action CaLISTA CA21109 supported by COST (European Cooperation in Science and Technology).}

\address{Benjamin McKay\newline School of Mathematical Sciences,  University College Cork, Cork, Ireland}
\email{b.mckay@ucc.ie}

\address{Riccardo Ugolini\newline Department of Mathematics \\ 
Faculty of Mathematics and Physics \\
Jadranska 19 \\
1000 Ljubljana \\
Slovenia}
\email{riccardo.ugolini@fmf.uni-lj.si}

\subjclass[2020]{14J42, 32Q45, 53D35, 53E50}
\keywords{holomorphic contact forms; hyperbolicity of contact structure; holomorphic symplectic manifolds; Kobayashi hyperbolic}

\begin{document}
\begin{abstract}
Consider a holomorphic contact manifold.
Holomorphic discs tangent to the contact planes define a pseudometric on the manifold. This pseudometric integrates to a pseudodistance.
When the pseudodistance is a distance, we call the contact manifold \emph{contact-hyperbolic}, by analogy with Kobayashi hyperbolicity.

The goal of this paper is to construct explicit examples of contact-hyperbolic contact manifolds with large automorphism groups.

We study Reeb manifolds: holomorphic contact structures equipped with a Reeb vector field whose flow acts freely.
Our first main theorem shows that every proper Reeb manifold admits a holomorphic symplectic quotient.
It also identifies which symplectic manifolds arise this way.
The isomorphism classes of proper Reeb manifolds over a fixed symplectic base manifold are parameterised by the first cohomology.

Our second main theorem: a proper Reeb manifold is (complete) contact-hyperbolic if and only if its symplectic quotient manifold is (complete) Kobayashi hyperbolic.
This theorem allows us to construct many new explicit examples of contact-hyperbolic contact manifolds.

Finally, we study the group of contact biholomorphisms. 
Contact hyperbolicity implies that this group is a finite-dimensional Lie group.
For contact \(3\)-manifolds, we sharply bound the dimension of the automorphism group.
We give examples with automorphism groups reaching every possible dimension.
Our third main theorem: the unique maximally symmetric example, up to isomorphism, is the contact manifold $\B^2_{z,w}\times\C_y$ with contact form \(dy+(1-z)^{-2}dw\).
\end{abstract}

\maketitle

\tableofcontents

%
%

\section{Introduction}
The purpose of this paper is to investigate rigidity and invariant metrics in holomorphic contact structures.
Holomorphic contact structures are not as well understood as their real counterparts.
In particular, unlike real contact structures, they are not all equally rigid: the automorphism group of a holomorphic contact structure can have finite or infinite dimension.

Kobayashi defined his pseudometric on a complex manifold from its holomorphic discs.
The holomorphic discs tangent to a contact structure determine the contact structure.
From these discs, Demailly \cite{Dem95, Dem12} constructs a sub-Finsler pseudometric, analogous to the Kobayashi pseudometric.
He integrates this pseudometric along every curve tangent to the contact structure.
This integral is a length for that curve.
The associated pseudodistance between two points is the infimum length of curves tangent to the contact structure and joining the points.
By analogy with Kobayashi hyperbolicity, we call a holomorphic contact structure \emph{contact-hyperbolic} when the pseudodistance is a distance. 
In this paper, we study this pseudodistance in the presence of a global contact form.

When a complex manifold is Kobayashi hyperbolic, every contact structure on it is clearly contact-hyperbolic.
On the other hand, Forstnerič \cite{For17b} described a striking example of a contact-hyperbolic contact structure on complex Euclidean space.
Its automorphism group remains unknown.
On many complex manifolds that are not Kobayashi hyperbolic, we find new examples of contact-hyperbolic contact structures.
We also find the automorphism groups of those contact structures.

The aim of this paper is twofold: 
\begin{enumerate}
\item
We provide a method to construct explicit examples of contact-hyperbolic contact manifolds, particularly with large automorphism groups.
\item
In low dimension, we classify contact-hyperbolic contact manifolds based on the dimension of their automorphism groups. 
\end{enumerate}
This paper has one central insight: we reduce contact hyperbolicity of a large and important family of contact manifolds to classical Kobayashi hyperbolicity via a geometric quotient. 
This observation allows us to replace difficult sub-Finsler arguments on contact manifolds with classical complex-analytic arguments on their quotients. As a consequence, we reduce questions about rigidity and automorphism groups of contact-hyperbolic contact structures to known results in Kobayashi hyperbolicity.

Boothby and Wang \cite{BW} constructed this quotient with its symplectic form in the setting of real contact structures.
Our complex setting gives rise to a new and more sophisticated story, which has no real analogue.
Once we fix the symplectic quotient manifold, we reconstruct all possible contact manifolds with that given quotient, in terms of Dolbeault cohomology.
Only with that reconstruction established does our central insight give us access to automorphism groups, via lifting automorphisms from the quotient.

The automorphism group of a Kobayashi hyperbolic complex manifold is a finite-dimensional real Lie group.
Isaev \cite{Isa07, Isa08} provided a sharp classification of such manifolds 
with biholomorphism group near maximal dimension.
Similarly, the automorphism group of a contact-hyperbolic contact manifold is a finite-dimensional real Lie group.
Via elementary Lie theory, we prove an upper bound on its dimension: see Theorem~\vref{Thm:Riccardo}.
This bound is not sharp.
It does not appear to lead us to a classification in the style of Isaev.
Finding a sharp bound remains an open problem.

\subsection{Statement of the main theorems}
Our work proceeds in three steps: contact geometry, metric geometry and automorphism groups.
Each step yields a main theorem.
To describe our results, we introduce the necessary notation and terminology.

A \emph{contact manifold} \((M,V)\) is a complex manifold \(M\) equipped with a holomorphic contact structure \(V\).
A \emph{Reeb vector field} $v$ on a contact manifold $(M,V)$ is a holomorphic vector field $v$ everywhere transverse to \(V\) but whose flow preserves $V$ \cite[p.~5]{Gei08}.
In particular, a Reeb vector field has no zeroes.
A \emph{Reeb manifold} $(M, V, v)$ is a contact manifold $(M,V)$ equipped with a $\C$-complete Reeb vector field $v$ whose flow acts freely on $M$.
A Reeb manifold is \emph{proper} if the flow of the Reeb vector field acts properly.

Our goal is to reduce problems about a Reeb manifold to problems in lower dimensions, on the quotient space by the flow of the Reeb vector field.
Each Reeb manifold has a canonical contact form: the unique contact form \(\xi\) with \(\ip_v\xi=1\).
The first part of this work proves:
\begin{theorem}[Contact geometry theorem]\label{Thm:main1-intro}
Let $(M,V,v)$ be a proper Reeb manifold.
\begin{itemize}
\item
Bundle structure: the flow of $v$ generates a holomorphic principal \(\C\)-bundle
\[
\begin{tikzcd}
\C\arrow{r}&M\arrow{d}\\
&S,
\end{tikzcd}
\]
with the contact form as a holomorphic connection form.
\item
Symplectic structure: the curvature of this connection is a $C^{\infty}$-exact holomorphic symplectic form \(\omega\) on $S$.
\item
Reconstruction: Conversely, every complex manifold $S$ equipped with a $C^\infty$-exact holomorphic symplectic form \(\omega\) is the quotient of a proper Reeb manifold.
\item
Classification: $H^1(S,\C)$ parameterises the isomorphism classes of proper Reeb manifolds with a given symplectic quotient $(S,\omega)$.
\end{itemize}
\end{theorem}
This theorem summarises Lemma~\vref{lemma:Reeb.to.csl}, Theorem~\vref{thm:C.infty.exact.iff.csl}, Proposition~\vref{prop:flat-conn} and Theorem~\vref{thm:how-many-lifts}.
See Theorem~\vref{thm:how-many-lifts} for a detailed description of how $H^1(S, \C)$ parametrises such classes.
 
Note that two $\C$-principal bundles over a complex manifold $S$ may be isomorphic as $\C$-principal bundles, without being isomorphic as Reeb manifolds over $S$. For instance, if $S$ is Stein, every holomorphic $\C$-principal bundle over $S$ is holomorphically trivial.
Assume in addition that $H^1(S,\C)\neq 0$.
Then the trivial bundle carries nonisomorphic contact structures; see Example~\vref{non-equiv}.

In order to lift a holomorphic symplectic form $\omega$ to a holomorphic contact form on $M$, $\omega$ must be $C^\infty$-exact but need not be holomorphically exact. The following conditions are equivalent:
\begin{itemize}
\item
$\omega$ is holomorphically exact,
\item
the bundle $M\to S$ admits a holomorphic flat connection,
\item
after perhaps replacing $M$ by a covering space, the bundle $M\to S$ is holomorphically trivial.
\end{itemize}
We give additional equivalent conditions in Theorem~\vref{theorem:fit}.

Recall that an \emph{entire curve} is a nonconstant holomorphic map from the complex plane.
Any complex manifold containing an entire curve fails to be Kobayashi hyperbolic.
No Reeb manifold is Kobayashi hyperbolic, because the flow lines of the Reeb vector field are entire curves.
We ask when a Reeb manifold is contact-hyperbolic in Demailly's sense (see section~\vref{section:hyperbolicity}).

A connected contact manifold $(M,V)$ is \emph{(complete) contact-hyperbolic} if the pseudodistance is a (complete) distance.
\begin{theorem}[Metric geometry theorem]\label{Thm:intro-main2}
A proper Reeb manifold is (complete) contact-hyperbolic just when its symplectic quotient is (complete) Kobayashi hyperbolic.
\end{theorem}
We prove this result in Theorems~\vref{Thm:Reeb-principal-bdle} and~\vref{Thm:complete-lift-hyp}.
One direction is easy: if $(M,V)$ is (complete) contact-hyperbolic, then its symplectic quotient $S$ is (complete) Kobayashi hyperbolic, because the $\C$-principal bundle map $M\to S$ does not expand distances. However, the converse is not easy. An analogue of the ball-box theorem appears to be beyond reach, as the metric we associate to $V$ is merely an upper semicontinuous sub-Finsler metric. Instead our proof relies on holomorphic localisation arguments in both \(M\) and \(S\).

Theorem~\ref{Thm:intro-main2} facilitates construction of many nontrivial examples of (complete) contact-hyperbolic contact manifolds, starting from holomorphic symplectic (complete) Kobayashi hyperbolic manifolds.
For instance, if $S$ is any pseudoconvex domain  in $\C^{2N}$, then the standard holomorphic symplectic form on $\C^{2N}$ yields a family of pairwise nonisomorphic contact-hyperbolic contact structures on $S\times \C$, parameterised by $H^1(S,\C)$. 


\begin{theorem}[Automorphism group theorem]\label{Thm:class-3}
Let $(M, V, v)$ be a contact-hyperbolic Reeb manifold with $\dimC M=3$.
Then 
\[
2\leq \dimR \Aut_VM\leq 7.
\]
Each of these dimensions occurs.
Up to isomorphism, the unique Reeb 3-fold with $7$-dimensional automorphism group is
\[
(M,V,v)=(\B^2_{z,w}\times\C_y,[dy+(1-z)^{-2}dw=0],\partial_y),
\]
associated to the symplectic manifold $(S,\omega)=(\B^2,2(1-z)^{-3}dz\wedge dw)$; see Example~\vref{example:big.aut}.
\end{theorem}
This bound contrasts with Kobayashi hyperbolicity, where automorphism groups have dimension up to 15, and manifolds with near maximal symmetry are the most elementary examples: the ball and the bidisc.
The maximal symmetry Reeb manifolds are considerably more intricate.

The proof, in Section~\ref{Sec:class}, also sharpens an earlier bound on $\dimR \Aut_VM$ when $\dimR M=5$. To pin down the base $S$, our proof uses Isaev's classification of Kobayashi hyperbolic manifolds in terms of the dimension of their automorphism groups. This classification allows us to determine, up to biholomorphism, the holomorphic symplectic manifolds that arise from proper Reeb manifolds. 

For example, if $\dimR \Aut_VM=7$ then $\dimR \Aut S\geq 5$. The only possibilities for $S$ are the unit ball or the bidisc, up to biholomorphism. We rule out the bidisc by carefully inspecting the closed Lie subgroups of its automorphism group.

\subsection{Organisation of the paper}
Section~\ref{section:hyperbolicity} defines and develops the notion of contact hyperbolicity.
Section~\ref{section:automorphisms} studies automorphism groups of contact structures.
Sections~\ref{section:contact.symplectic.lifts} to \ref{section:hyperbolicity.of.lifts} show that proper Reeb manifolds arise by lifting holomorphic symplectic structures from quotient manifolds, and develop the moduli spaces.
Sections~\ref{Sec:Lie} to \ref{Sec:class} apply this to reduce classification of automorphism groups to the study of automorphisms of holomorphic symplectic manifolds.

\bigskip

We thank Fabio Podest\`a for providing us with the proof of Theorem~\ref{thm:tilted.subgroups.2}.

\section{Contact hyperbolicity}%
\label{section:hyperbolicity}
In this section, we define
\begin{enumerate}
\item
isotropic discs;
\item
the pseudometric, using isotropic discs;
\item
lengths of isotropic curves, by integrating the pseudometric;
\item
the pseudodistance, as the infimum length of isotropic curves; and
\item
contact hyperbolicity.
\end{enumerate}

\subsection{Contact structures and contact forms}
Let $M$ be a complex manifold of complex dimension $2N+1$, with $N\geq 1$.
\begin{definition}
A \emph{contact form} on $M$ is a holomorphic $1$-form $\xi$ so that $\xi\wedge(d\xi)^N\ne 0$.
A \emph{local contact form} is a contact form on an open subset of $M$.
A \emph{contact structure} $V$ is a holomorphic subbundle of $TM$, of rank $2N$, such that, locally, $V$ is the kernel of a local contact form.
A \emph{contact manifold} $(M,V)$ is a complex manifold $M$ equipped with a contact structure $V$.
\end{definition}
\subsection{The standard example}
On \(\C^{2N+1}\), with coordinates $(z,w,y)\in\C^N\times \C^N\times\C$, the \emph{standard contact form} is
\begin{equation}\label{Eq:standard-contact}
\xi=dy-w_1\,dz_1-\dots-w_N\,dz_N.
\end{equation}
Its kernel is the standard contact structure.
The holomorphic vector field \(\partial_y\) is a Reeb vector field.
A vector field is tangent to the contact structure precisely when it is pointwise in the span of
\[
\partial_{z_1}+w_1\partial_y,\dots,\partial_{z_N}+w_N\partial_y,
\partial_{w_1},\dots,\partial_{w_N}.
\]
Their Lie brackets clearly span $\C^{2N+1}$, since
\[
\left[\partial_{w_j},\partial_{z_j}+w_j\partial_y\right]
=
\partial_y.
\]
Thus the contact structure is one-step bracket generating.
By Sussmann's Orbit Theorem, any two points in any connected open set are connected by a piecewise smooth curve, with each piece the flow line of a smooth vector field tangent to the contact structure \cite{Sus73}.

By the holomorphic Darboux theorem for contact manifolds \cite[p.~66]{Gei08}, every contact structure is locally isomorphic to the standard example, by a local isomorphism carrying a given Reeb vector field to \(\partial_y\). 
Hence, any two points in any connected open set are connected by flow lines of smooth vector fields tangent to the contact planes.
So a contact manifold is connected just when it is connected by piecewise smooth paths tangent to the contact planes.
\subsection{Isotropic discs}
\begin{definition}
Let $(M,V)$ be a holomorphic contact manifold.
A holomorphic map $\varphi\colon C\to M$ from a Riemann surface $C$ is a \emph{$V$-isotropic curve}, or \emph{isotropic curve} or \emph{$V$-curve} for short, if
\[
 d_\zeta\varphi(T_{\zeta} C)\subset V_{\varphi(\zeta)}, \text{ for every $\zeta\in C$.}
 \] 
If $C$ is the unit disc, the $V$-curve is a \emph{$V$-disc}.
\end{definition}
\begin{example}
For the standard contact structure 
\[
\xi=dy-w_1\,dz_1-\dots-w_N\,dz_N,
\]
on $M=\C^{2N+1}$, a $V$-disc consists of holomorphic functions
\[
y=y(\zeta), z_j=z_j(\zeta), w_j=w_j(\zeta), j=1,2,\dots, N,
\]
defined for $|\zeta|<1$, satisfying the holomorphic differential equation
\[
\frac{dy}{d\zeta}=w_1(\zeta)\frac{dz_1}{d\zeta}+\dots+w_N(\zeta)\frac{dz_N}{d\zeta}.
\]
We can construct every $V$-disc in $\C^{2N+1}$ by picking any holomorphic functions 
\[
z_j=z_j(\zeta), w_j=w_j(\zeta), j=1,2,\dots, N,
\]
on the disk, and letting
\[
y(\zeta):=\int 
\left(
w_1(\zeta)\frac{dz_1}{d\zeta}+\dots+w_N(\zeta)\frac{dz_N}{d\zeta}
\right)d\zeta.
\]
Thus, locally, all $V$-curves (not just $V$-discs) arise by explicit quadrature.
\end{example}
\subsection{The pseudometric}
Let $V_0$ be a real or complex vector space.
Let $X\subseteq V_0$ be a subset.
The \emph{Minkowski functional} of \(X\) is
\[
|w|_X:=\inf\set[\frac{1}{\lambda}]{\lambda w\in X,\lambda>0}.
\]
For example, if $X$ is the unit ball of a norm then $|w|_X$ the norm.
Intuitively, $|w|_X$ is large when $w$ is far away from $X$, or more precisely, when a small scale factor is required to scale $w$ into $X$.
As $|w|_X=|w|_{[0,1]X}$, we may assume without loss of generality that \(X\) is closed under rescaling by real numbers in \([0,1]\).
The functional is finite just when \(X\) contains a nonzero vector on every ray from the origin.

Assume that $V_0$ is a complex vector space.
Also assume that $X$ is invariant under rescaling by unit complex numbers (hence by complex numbers in the unit disk).
Then \(w\mapsto|w|_X\) is a \emph{quasi-norm}: $|cw|_X=|c||w|_X$ for all $c\in\C$.

Let $M$ be a complex manifold endowed with a contact structure $V$. 
The \emph{velocity} of a holomorphic disc $\D\xrightarrow\varphi M$ is $\varphi'(0)$.
Let $X_p\subset V_p$ be the set of velocities of \(V\)-discs through \(p\).
Clearly, \(X_p\) is invariant under rescaling by complex numbers in the closed unit disc.
Denote $|v|_{X_p}$ by $\kappa_V(p;v)$, the \emph{Kobayashi--Royden pseudometric} of $V$.
Equivalently,
\[
\kappa_V(p;v)
=\inf\set[\frac{1}{|\lambda|}]{\varphi'(0)=\lambda v, \text{ some $V$-disc } \varphi}.
\]
The pseudometric \(\kappa_V\) measures the scale by which we must divide a vector to get it to become the velocity of an isotropic disc.
The pseudometric \(\kappa_V\) vanishes on a vector just when arbitrarily large dilations of that vector are realised as velocities of isotropic discs.
In particular, if all vectors in \(V\) are velocities of isotropic discs, so velocities are unconstrained, then the pseudometric vanishes.
Clearly, \(\kappa_V(p;cv)=|c|\kappa_V(p;v)\) for \(c\in\C\).
\begin{lemma}\label{lemma:upper.semi}
For any holomorphic contact manifold $(M,V)$, the Kobayashi--Royden pseudometric $\kappa_V$ is a finite, upper semicontinuous, quasi-norm.
\end{lemma}
\begin{proof}
The holomorphic Darboux theorem ensures that, for any $v\in V_p$, there is an isotropic disc with velocity $\lambda v$ for some $\lambda>0$ small enough.
Therefore, $\kappa_V$ is a finite quasi-norm; it is also upper semicontinuous in $p$ \cite[Prop.~1.5, p. 9]{Dem95}.
\end{proof}
%
Let $\D:=\{\zeta\in\C: |\zeta|<1\}$.
Let $k_\D$ denote its hyperbolic distance and $\kappa_\D$ its hyperbolic metric; see, \eg \cite{AbateTaut,Kob}.
Clearly,
\[
\kappa_V(p;v)
=\inf\set[\kappa_\D \left(0;\frac{1}{\lambda}\right)]{\varphi'(0)=\lambda v, \text{ some $V$-disc } \varphi}.
\]
\begin{remark}\label{rem:infint-poinc}
As $\kappa_\D$ is invariant under the group of automorphisms of $\D$, if $\varphi\colon \D\to M$ is a $V$-disc then
\[
\kappa_V(\varphi(\zeta); \varphi'(\zeta))\leq \kappa_\D(\zeta,\partial_\zeta);
\]
an analogue of the Schwarz--Pick lemma.
\end{remark}

\subsection{The pseudodistance}
Assume that $M$ is connected.
Let \(p,q\in M\).
Consider piecewise smooth $V$-tangent curves $\gamma\colon [0,1]\to M$ that join $p$ to $q$.
Sussmann's orbit theorem \cite{Sus73} constructs such curves.
Since \(\kappa_V\) is upper semicontinuous, the function 
\[
t\mapsto\kappa_V(\gamma(t); \gamma'(t))
\]
is upper semicontinuous so bounded and measurable on compact sets \cite{WZ77} p.~56 Corollary~4.16.
The \emph{$V$-length} of $\gamma$ is
\[
\ell_V(\gamma):=\int_0^1 \kappa_V(\gamma(t); \gamma'(t)) dt.
\]
The \emph{$V$-pseudodistance} is
\[
k_V(p,q):=\inf_{\gamma}\ell_V(\gamma).
\]
We take the infimum over \(V\)-tangent curves \(\gamma\) joining \(p\) to \(q\).
The triangle inequality for the pseudodistance follows from concatenating \(V\)-tangent curves.
\begin{lemma}\label{lem:Schwarz}
Let $M$ be a complex manifold endowed with a contact structure $V$. 
Let $\varphi\colon \D\to M$ be a $V$-disc. Then for every $\zeta,\tilde\zeta\in \D$,
\[
k_V(\varphi(\zeta),\varphi(\tilde\zeta))\leq k_\D(\zeta,\tilde\zeta).
\]
\end{lemma}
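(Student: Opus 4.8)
The plan is to reduce the (integrated) inequality to the infinitesimal Schwarz inequality already recorded in Remark~\ref{rem:infint-poinc}, exploiting the fact that $k_V$ is by construction the length distance associated with the sub-Finsler quasi-norm $\kappa_V$, while $k_\D$ is likewise the length distance of the hyperbolic metric $\kappa_\D$.

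Fix $\zeta,\tilde\zeta\in\D$ and let $\sigma\colon[0,1]\to\D$ be an arbitrary piecewise smooth curve with $\sigma(0)=\zeta$ and $\sigma(1)=\tilde\zeta$. First I would push $\sigma$ forward by $\varphi$ to obtain $\gamma:=\varphi\circ\sigma\colon[0,1]\to M$. Since $\varphi$ is holomorphic and $\sigma$ is piecewise smooth, $\gamma$ is piecewise smooth; and because $\varphi$ is a $V$-disc, $\gamma'(t)=d_{\sigma(t)}\varphi(\sigma'(t))\in V_{\gamma(t)}$ wherever $\sigma$ is differentiable. Hence $\gamma\in\mathcal W_{\varphi(\zeta),\varphi(\tilde\zeta)}$, so it is an admissible competitor in the definition of $k_V(\varphi(\zeta),\varphi(\tilde\zeta))$.

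Next I would estimate its $V$-length. Writing $\sigma'(t)\in\C\cong T_{\sigma(t)}\D$ and using holomorphy of $\varphi$, we have $\gamma'(t)=\varphi'(\sigma(t))\,\sigma'(t)$. Since $\kappa_V(p;\cdot)$ is a quasi-norm, hence $\C$-homogeneous, $\kappa_V(\gamma(t);\gamma'(t))=|\sigma'(t)|\,\kappa_V(\varphi(\sigma(t));\varphi'(\sigma(t)))$, and the infinitesimal Schwarz inequality of Remark~\ref{rem:infint-poinc} gives $\kappa_V(\varphi(\sigma(t));\varphi'(\sigma(t)))\le\kappa_\D(\sigma(t);1)$. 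Therefore
\[
\ell_V(\gamma)=\int_0^1\kappa_V(\gamma(t);\gamma'(t))\,dt\le\int_0^1|\sigma'(t)|\,\kappa_\D(\sigma(t);1)\,dt=\ell_\D(\sigma),
\]
the hyperbolic length of $\sigma$. Taking the infimum over all admissible $\sigma$ and invoking the classical fact that $k_\D$ is the length distance of $\kappa_\D$, I would conclude $k_V(\varphi(\zeta),\varphi(\tilde\zeta))\le\inf_\sigma\ell_\D(\sigma)=k_\D(\zeta,\tilde\zeta)$.

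There is no serious obstacle here: the argument is the standard ``infinitesimal-to-integrated'' comparison. Only two minor points need a line of justification. The first is the homogeneity $\kappa_V(p;\lambda v)=|\lambda|\,\kappa_V(p;v)$, which follows directly from the defining infimum over $\mathcal L_p(v)$ by reparametrising the scalars $\lambda_\varphi$ (a disc in $\mathcal L_p(\lambda v)$ with parameter $\lambda_\varphi$ lies in $\mathcal L_p(v)$ with parameter $\lambda\lambda_\varphi$). The second is the measurability and integrability of $t\mapsto\kappa_V(\gamma(t);\gamma'(t))$, which is ensured because $\kappa_V$ is upper semicontinuous in $p$ and homogeneous in the vector slot while $\gamma$ is piecewise smooth. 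The only genuinely external input is that $k_\D$ equals the integrated hyperbolic metric, a standard fact about the Poincaré disc.
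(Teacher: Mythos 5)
Your proposal is correct and follows essentially the same route as the paper's proof: push a curve in $\D$ forward by $\varphi$ to get an admissible competitor in $\mathcal W_{\varphi(\zeta),\varphi(\tilde\zeta)}$, then bound its $V$-length via Remark~\ref{rem:infint-poinc} and homogeneity of $\kappa_V$ and $\kappa_\D$. The only cosmetic difference is that the paper takes $\sigma$ to be a Poincar\'e geodesic, so its hyperbolic length equals $k_\D(\zeta,\tilde\zeta)$ outright and no final infimum over curves is needed.
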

\begin{proof}
Pick a geodesic $\sigma\colon [0,1]\to \D$ for the Poincar\'e distance such that $\sigma(0)=\zeta$ and $\sigma(1)=\tilde\zeta$. 
Then $\varphi \circ \sigma$ joins \(\varphi(\zeta)\) to \(\varphi(\tilde\zeta)\). 
By definition of \(k_V\) as an infimum,
\begin{align*}
k_V(\varphi(\zeta),\varphi(\tilde\zeta))
\leq 
\ell_V(\varphi \circ \sigma)
=\int_0^1 \kappa_V(\varphi(\sigma(t)); \varphi'(\sigma(t))\sigma'(t))dt.
\end{align*}
By Remark~\ref{rem:infint-poinc},
\[
\int_0^1 
	\kappa_V
	(
		\varphi
		(
			\sigma(t)
		); 
		\varphi'
		(
			\sigma(t)
		)
		\sigma'(t)
	) dt\leq 
\int_0^1 \kappa_\D(\sigma(t); \sigma'(t))dt=k_\D(\zeta,\tilde\zeta).
\]
\end{proof}

\subsection{Continuity}
To establish some continuity of the pseudodistance $k_V$, we need a local lemma:
\begin{lemma}\label{lem:ball-contact}
For $r\in (0,1]$, let $U:=\{p=(z,w,y)\in \C^{2N}\times \C: \|(z,w)\|^2< r^2, |y|<r\}$.
Endow \(U\) with the standard contact structure $V_0$ defined by the standard contact form $\xi$ given in equation \eqref{Eq:standard-contact}. Then 
\[
\lim_{p\to 0}k_{V_0}(p,0)=0.
\]
\end{lemma}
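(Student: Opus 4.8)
The plan is to bound $k_{V_0}\bigl((z,w,y),(0,0,0)\bigr)$ by splitting the passage to the origin into three elementary moves and adding their lengths via the triangle inequality, treating the ``easy'' horizontal directions $z,w$ and the ``hard'' vertical (Reeb) direction $y$ separately. Each move is realised by an explicit Legendrian disc $\varphi\colon\D\to B_r$ joining $\varphi(\zeta_0)$ to $\varphi(\zeta_1)$, and its contribution is controlled from above by $k_\D(\zeta_0,\zeta_1)$ through the Schwarz-type Lemma~\ref{lem:Schwarz}. Concretely, I would establish
\begin{equation*}
k_{V_0}\bigl((z,w,y),(0,0,0)\bigr)\le k_{V_0}\bigl((z,w,y),(0,w,y)\bigr)+k_{V_0}\bigl((0,w,y),(0,0,y)\bigr)+k_{V_0}\bigl((0,0,y),(0,0,0)\bigr)
\end{equation*}
and show that each of the three terms tends to $0$ as $(z,w,y)\to(0,0,0)$.

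For the two horizontal moves I use that $\partial_{z_j}$, and (once $z=0$) also $\partial_{w_j}$, lie in $\ker\xi$. Thus the discs $\zeta\mapsto(\tfrac{\zeta}{\zeta_*}z,\,w,\,y)$ and $\zeta\mapsto(0,\,\tfrac{\zeta}{\zeta_*}w,\,y)$ are Legendrian, sending $0$ and a chosen $\zeta_*\in(0,1)$ to the two endpoints of each move. Taking $\zeta_*=\sqrt{\|z\|}$ (resp. $\sqrt{\|w\|}$) makes the image radius $\sqrt{\|z\|}\to0$ (so the disc stays in $B_r$ for $(z,w,y)$ near the origin), while Lemma~\ref{lem:Schwarz} gives a length bound $\operatorname{arctanh}\sqrt{\|z\|}\to0$; the same works for the $w$-move. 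Hence the first two terms vanish in the limit.

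The crux is the vertical move from $(0,0,y)$ to $(0,0,0)$, where no Legendrian direction points along $\partial_y$; here I exploit the bracket relation $[\partial_{z_1},\partial_{w_1}+z_1\partial_y]=\partial_y$ to generate a change in $y$ through a closed horizontal loop. Using only the first Heisenberg pair, I set $z_1(\zeta)=a(\zeta^2-h^2)$, $w^1(\zeta)=b\,\zeta(\zeta^2-h^2)$, all other components $0$, and $y(\zeta)=y+\int_{-h}^{\zeta}z_1\,dw^1$; this is a Legendrian disc with $z_1(\pm h)=w^1(\pm h)=0$, and a direct computation gives $y(h)-y(-h)=\tfrac{8}{15}\,ab\,h^5$. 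I would then fix $|a|=|b|$ small enough that the image stays inside $B_r$, and choose the modulus and phase of $h$ so that $\tfrac{8}{15}ab\,h^5=-y$, which forces $|h|\sim|y|^{1/5}$. Then $\varphi(-h)=(0,0,y)$, $\varphi(h)=(0,0,0)$, and Lemma~\ref{lem:Schwarz} yields $k_{V_0}\bigl((0,0,y),(0,0,0)\bigr)\le k_\D(-h,h)\to0$ as $y\to0$. The main obstacle is precisely this balancing: the disc must enclose enough ``area'' to shift $y$ by the prescribed amount while its two marked points stay hyperbolically close and its image remains in the box, and it is the scaling $|h|\sim|y|^{1/5}$ that forces the length to vanish. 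Feeding the three estimates into the triangle inequality proves the claim.
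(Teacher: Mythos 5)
Your proof is correct, and it shares the paper's overall strategy---straight Legendrian moves in the $z$- and $w$-directions plus one ``area-generating'' move for the Reeb direction---but the vertical step is realized in a genuinely different way. The paper (which writes out only $N=1$) connects the two points on the same Reeb fiber by a staircase of straight Legendrian lines: it first moves the base point out to $z=\sqrt{y}$, then uses $\gamma_3(t)=(\sqrt{y},\sqrt{y}+t,\sqrt{y}t)$, along which a $w$-displacement of size $\sqrt{|y|}$ at height $z=\sqrt{y}$ produces the full $y$-displacement, and then returns; all parameters scale like $|y|^{1/2}$, and the estimate is on $V$-lengths of explicit curves. You instead use a single polynomial Legendrian disc $z_1=a(\zeta^2-h^2)$, $w^1=b\zeta(\zeta^2-h^2)$, $y(\zeta)=y+\int z_1\,dw^1$, whose horizontal components vanish at both marked points $\pm h$, so the disc hits $(0,0,y)$ and $(0,0,0)$ exactly, with no intermediate excursion; your computation $y(h)-y(-h)=\tfrac{8}{15}ab\,h^5$ checks out ($\tfrac{6}{5}-\tfrac{8}{3}+2=\tfrac{8}{15}$), and with $|a|=|b|$ a fixed small constant the balancing condition forces $|h|\sim|y|^{1/5}$, whence $k_{V_0}\bigl((0,0,y),(0,0,0)\bigr)\le k_\D(-h,h)\to 0$ by Lemma~\ref{lem:Schwarz}. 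Your route buys a cleaner argument (endpoints hit exactly, the bound comes directly from the Schwarz lemma for chains of discs rather than from estimating $\ell_{V_0}$ of piecewise curves, and it works verbatim for all $N$ by using only the first Heisenberg pair), at the cost of a worse quantitative rate ($|y|^{1/5}$ versus the paper's $|y|^{1/2}$); neither rate matters for the qualitative limit. One detail to make explicit in a final write-up: the $y$-component of your vertical disc equals $y$ plus $ab$ times a polynomial bounded on $\D$ by a universal constant (for $|h|\le 1$), so the image does stay in $B_r$ once $|ab|$ is chosen small relative to $r$ and $(z,w,y)$ is close to the origin.
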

\begin{proof}
We assume that $N=1$; the general case is analogous.

For any $(z,w,y)$ sufficiently close to $(0,0,0)$, $(z,w,y)$ is connected to $(0,0,0)$ by a combination of three $V_0$-curves whose $V_0$-lengths tend to $0$ as $(z,w,y)$ converges to $(0,0,0)$:
\begin{align*}
\gamma_1 (t) =&  (z,t,y), \\
\gamma_2 (t) = & (t,0,y), \\
\gamma_3 (t) = & (\sqrt{y} + t,\sqrt{y}, \sqrt{y}t).
\end{align*}
Travel first along $\gamma_1$ and then $\gamma_2$ to connect $(z,w,y)$ to $(2\sqrt{y},0,y)$, then again $\gamma_1$ to reach $(2\sqrt{y}, \sqrt{y}, y)$. Travel first along $\gamma_3$ to reach $(\sqrt{y}, \sqrt{y}, 0)$, and finally again $\gamma_1$ and $\gamma_2$ to reach the origin.
\end{proof}

\begin{proposition}\label{lem:K_V-continuous}
Let $M$ be a connected complex manifold endowed with a contact structure $V$. 
Then $k_V(p,q)<+\infty$ for every $p,q\in M$ and $k_V\colon M\times M\to [0,+\infty)$ is continuous.
\end{proposition}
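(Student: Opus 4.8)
The plan is to prove the two assertions in turn: finiteness is immediate from Sussmann's orbit theorem together with the fact that $\kappa_V$ is finite-valued and upper semicontinuous, while continuity will be reduced, via the triangle inequality, to the single local statement already isolated in Lemma~\ref{lem:ball-contact}. For finiteness, fix $p,q\in M$. By Sussmann's orbit theorem the set $\mathcal W_{p,q}$ contains a curve $\gamma$ obtained by concatenating finitely many flow lines of local sections of $V$. Along such a $\gamma$ the map $t\mapsto(\gamma(t),\gamma'(t))$ is piecewise continuous into the total space of $V$, so its image lies in a compact subset of $V$. Since $\kappa_V$ is finite at each point (every $\mathcal L_p(v)$ contains a $V$-disc with $\lambda_\varphi\neq 0$, by the Darboux theorem) and upper semicontinuous on $V$, the function $t\mapsto\kappa_V(\gamma(t);\gamma'(t))$ is measurable and bounded above on $[0,1]$. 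Hence $\ell_V(\gamma)<+\infty$, and therefore $k_V(p,q)\le\ell_V(\gamma)<+\infty$.

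For continuity I would first record two elementary structural properties of $k_V$. It is symmetric, because reversing a curve in $\mathcal W_{p,q}$ yields a curve in $\mathcal W_{q,p}$ of the same $V$-length, and it satisfies the triangle inequality by concatenation of admissible curves. These give
\[
|k_V(p,q)-k_V(p',q')|\le k_V(p,p')+k_V(q,q'),
\]
so the whole statement reduces to proving the local estimate $\lim_{q\to p}k_V(p,q)=0$ for every fixed $p\in M$.

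To establish the local estimate, fix $p$ and, by the Darboux theorem, choose a biholomorphism $\Phi\colon U\to B_r$ from a neighbourhood $U$ of $p$ with $\Phi(p)=(0,0,0)$ and $\Phi_*V=V_0$. The crucial comparison is the pointwise inequality $\kappa_V(x;u)\le\kappa_{V_0}(\Phi(x);\Phi_*u)$ for $x\in U$: indeed $\Phi$ carries $V$-discs lying in $U$ bijectively to $V_0$-discs in $B_r$, whereas in computing $\kappa_V$ on $M$ one is allowed to use additional $V$-discs that leave $U$, and enlarging the competing family of discs can only decrease the infimum defining the Kobayashi--Royden pseudometric. Consequently, for any curve $\gamma\in\mathcal W_{p,q}$ whose image stays in $U$ one has $\ell_V(\gamma)\le\ell_{V_0}(\Phi\circ\gamma)$, and taking the infimum over such curves yields $k_V(p,q)\le k_{V_0}^{B_r}\big((0,0,0),\Phi(q)\big)$ for $q\in U$. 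Lemma~\ref{lem:ball-contact} then gives $k_{V_0}^{B_r}\big((0,0,0),\Phi(q)\big)\to 0$ as $q\to p$, which is exactly the desired local estimate and, combined with the previous paragraph, proves continuity. I expect the main obstacle to be precisely this comparison step: one must argue carefully that restricting the defining $V$-discs to lie inside the Darboux chart can only \emph{increase} $\kappa_V$, so that the globally defined distance $k_V$ is dominated by the model distance $k_{V_0}^{B_r}$ to which Lemma~\ref{lem:ball-contact} applies; a secondary technical point is to ensure, from upper semicontinuity, the measurability and boundedness of $t\mapsto\kappa_V(\gamma(t);\gamma'(t))$ that make $\ell_V$ well defined.
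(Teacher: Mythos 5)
Your proposal is correct and follows essentially the same route as the paper: finiteness from Sussmann's theorem together with upper semicontinuity of $\kappa_V$, then reduction of continuity via the triangle inequality to the local estimate $\lim_{q\to p}k_V(p,q)=0$, which is obtained from Lemma~\ref{lem:ball-contact} in a Darboux chart. The only difference is that you spell out the monotonicity step (discs and curves confined to the chart form a smaller competing family, so $k_V$ is dominated by the model distance $k_{V_0}$ on $B_r$), which the paper leaves implicit in its ``follows at once''; this is exactly the non-increasing property of contact morphisms recorded elsewhere in the paper, so no gap remains.
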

\begin{proof}
Lemma~\vref{lemma:upper.semi} ensures finiteness.
By Lemma~\ref{lem:ball-contact}, $\lim_{p\to q}k_V(p,q)=0$, 
as $V$ is locally the standard contact form \eqref{Eq:standard-contact}.
By the triangle inequality, $k_V$ is continuous.
\end{proof}
\subsection{Hyperbolicity of contact structures}
\begin{definition}
A connected contact manifold $(M,V)$ is \emph{contact-hyperbolic} if $k_V$ is a distance, \ie $k_V$ separates points, \ie $k_V(p,q)=0$ if and only if $p=q$.
We say that \((M,V)\) is \emph{complete contact-hyperbolic} if $k_V$ is a complete distance.
\end{definition}
\begin{proposition}\label{prop:same-top}
A connected contact manifold \((M,V)\) is contact-hyperbolic if and only if $k_V$ induces the manifold topology on $M$.
\end{proposition}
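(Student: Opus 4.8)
The plan is to prove both implications of this equivalence, where the forward direction is essentially formal and the reverse direction requires the local estimates established above.

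For the easy direction, suppose $M$ is $V$-hyperbolic, so $k_V$ is a genuine distance. I would argue that the topology induced by $k_V$ coincides with the manifold topology. By Proposition~\ref{lem:K_V-continuous}, the map $k_V\colon M\times M\to[0,+\infty)$ is continuous with respect to the manifold topology; this continuity shows immediately that every $k_V$-open ball is open in the manifold topology, so the $k_V$-topology is coarser than (contained in) the manifold topology. The substance is the reverse containment, but for the forward implication I only need the statement ``if $V$-hyperbolic then $k_V$ induces the manifold topology,'' and the continuity of $k_V$ already gives that every manifold-open set is $k_V$-open once we know the two Hausdorff topologies with one coarser than the other actually agree. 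I would make this precise by showing that the identity map from $(M,\text{manifold top.})$ to $(M,k_V)$ is a continuous bijection, and then use a local argument.

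The reverse direction is immediate: if $k_V$ induces the manifold topology, then in particular the $k_V$-topology is Hausdorff (the manifold topology is), which forces $k_V(p,q)=0\Rightarrow p=q$, so $k_V$ is a distance and $M$ is $V$-hyperbolic. Thus the real content lies in proving, for the forward direction, that the manifold topology is no finer than the $k_V$-topology, i.e.\ that $k_V$-balls form a neighbourhood basis at each point. This is exactly where Lemma~\ref{lem:ball-contact} enters: given $q\in M$, choose a Darboux chart around $q$ biholomorphic to a standard contact ball $B_r$, so that near $q$ the distance $k_V$ is comparable to the local distance $k_{V_0}$ on the model; Lemma~\ref{lem:ball-contact} guarantees $k_{V_0}((z,w,y),0)\to 0$ as $(z,w,y)\to 0$, which shows small manifold-neighbourhoods of $q$ are contained in small $k_V$-balls, giving continuity of the identity in the required direction. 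Combined with continuity of $k_V$ (for the opposite inclusion), the two topologies agree.

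The main obstacle I anticipate is the standard subtlety in going from a pseudodistance to a topology: a priori $k_V$ restricted to a chart only bounds the genuine pseudodistance from above (by the localisation/monotonicity $k_V^M\le k_V^{\text{chart}}$), so I must control $k_V$ from below near the diagonal to conclude that manifold-convergence and $k_V$-convergence coincide. Since we are assuming $V$-hyperbolicity ($k_V$ is a true distance), the $k_V$-topology is Hausdorff, and the comparison with the model distance on the Darboux chart via Lemma~\ref{lem:ball-contact} should pin down the neighbourhood basis. I would handle the lower bound by observing that a sequence $k_V(p_n,q)\to 0$ cannot escape a compact coordinate neighbourhood of $q$ (otherwise one could separate the limit point from $q$, contradicting that $k_V$ is a distance), reducing everything to the local model where Lemma~\ref{lem:ball-contact} applies.
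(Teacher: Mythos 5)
Your treatment of the easy direction is correct (and matches the paper): if $k_V$ induces the manifold topology, Hausdorffness forces $k_V(p,q)=0\Rightarrow p=q$. Likewise, continuity of $k_V$ (Proposition~\ref{lem:K_V-continuous}) does give one inclusion of topologies, namely that every $k_V$-ball is open in the manifold topology. But your plan for the substantive inclusion --- every manifold-open set is $k_V$-open, i.e.\ $k_V(p_n,q)\to 0$ forces $p_n\to q$ in the manifold topology --- has a genuine gap. The two tools you invoke, continuity of $k_V$ and Lemma~\ref{lem:ball-contact}, prove the \emph{same} inclusion, not opposite ones: the localisation inequality only gives $k_V\le k_{V_0}^{\text{chart}}$ on a Darboux chart, so Lemma~\ref{lem:ball-contact} converts manifold-closeness into $k_V$-smallness, never the reverse. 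Your phrase ``$k_V$ is comparable to the local distance $k_{V_0}$'' asserts a two-sided bound that is not available (and whose failure for general pseudodistances is exactly the difficulty), as you yourself concede in your final paragraph.

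Your fallback --- ``a sequence with $k_V(p_n,q)\to 0$ cannot escape a compact coordinate neighbourhood of $q$, otherwise one could separate the limit point from $q$, contradicting that $k_V$ is a distance'' --- is circular and, as stated, false reasoning. The non-escaping claim \emph{is} the hard direction of the proposition; it does not follow from $k_V$ being a distance, because hyperbolicity gives $k_V(p,q)>0$ for each fixed $p\ne q$ but says nothing about $\inf\{k_V(p,q): p\in M\setminus U\}$ being positive, which is what you need. Worse, a sequence escaping $U$ need not have any limit point in $M$ at all (it can diverge to infinity), so there is no point to ``separate.'' The idea you are missing is the paper's boundary-crossing argument: take curves $\gamma_n\in\mathcal W_{q,p_n}$ with $\ell_V(\gamma_n)<k_V(q,p_n)+\tfrac1n$; since $p_n\notin\overline U$ and $\gamma_n$ is continuous, each $\gamma_n$ meets $\partial U$ at some point $q_n$, and these crossing points live in the \emph{compact} set $\partial U$, so (after passing to a subsequence) $q_n\to q_0\in\partial U$. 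Then $k_V(q,q_n)\le\ell_V(\gamma_n)\to 0$, while continuity of $k_V$ gives $k_V(q,q_n)\to k_V(q,q_0)>0$ by hyperbolicity (note $q_0\ne q$), a contradiction. This transfer of compactness from the escaping points $p_n$ to the crossing points $q_n$ is the step your sketch cannot do without.
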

\begin{proof}
If $k_V$ induces the manifold topology on $M$ then $k_V$ separates points so $(M,V)$ is contact-hyperbolic. Conversely, suppose $(M,V)$ is contact-hyperbolic.
Assume by contradiction that there is a relatively compact open set $U\subset M$, $p\in U$ and a sequence $p_1,p_2,\dots\in M\setminus\overline{U}$ such that $\lim_{n\to \infty}k_V(p_n,p)=0$. 
For each $n$, pick a $V$-tangent curve $\gamma_n$ from \(p\) to \(p_n\) 
with
\[
\ell_V(\gamma_n)<k_V(p,p_n)+\frac{1}{n}.
\]
By continuity, the curve $\gamma_n$ meets $\partial U$ at a point $q_n$ and
\[
k_V(p,q_n)\leq \ell_V(\gamma_n)<k_V(p,p_n)+\frac{1}{n}.
\]
As $\partial U$ is compact, after passing to a subsequence, $\{q_n\}$ converges, say $q_n \to q_0$, and $q_0\in\partial U$. 
By contact-hyperbolicity, $k_V(p,q_0)>0$.  
But $k_V$ is continuous by Proposition~\ref{lem:K_V-continuous}, hence
\[
0<k_V(p,q_0)=\lim_{n\to\infty} k_V(p,q_n)\leq \lim_{n\to\infty}\ell_V(\gamma_n)<\lim_{n\to\infty}k_V(p,p_n)+\frac{1}{n}=0.
\]
\end{proof}

\subsection{The chain pseudodistance}
We now define a pseudodistance \ala Kobayashi on $M$ using chains of $V$-discs. Take $p, q\in M$.
A \emph{$V$-chain} $\mathcal C=\{(\varphi_j,t_j)\}$ between $p$ and $q$ is a finite set of $V$-discs $\varphi_j\colon \D\to M$, $j=1,\ldots, m$ and real numbers $t_j\in (0,1)$ for every $j\in \{1,\ldots, m\}$, such that $\varphi_1(0)=p$, $\varphi_m(t_m)=q$ and, if $1<j<m$, $\varphi_j(t_j)=\varphi_{j+1}(0)$. 
(We could use complex numbers \(\{t_j\}\), but we can arrange by automorphisms of the disk that they are real.)
The \emph{length} of the chain $\mathcal C$ is
\[
\ell_V(\mathcal C):=\frac{1}{2}\sum_{j=1}^m \log \frac{1+t_j}{1-t_j}=\sum_{j=1}^m k_\D(0,t_j).
\]
By Sussmann's orbit theorem~\cite{Sus73}, some chain of $V$-discs joins any two points of $M$. 
Let \(\tilde k_V(p,q)\) be the infimum length of such chains; $\tilde k_V$ is a pseudodistance on $M$. 
By Lemma~\ref{lem:Schwarz}, for all $p,q\in M$
\begin{equation}\label{Eq:quasi-Barth}
k_V(p,q)\leq \tilde k_V(p,q).
\end{equation}

\begin{remark} Royden \cite{Ro71} shows that the chain and integral Kobayashi pseudodistances coincide; also see Barth~\cite{Bar72}. A key step in the proof is the Lipschitz continuity of the Kobayashi pseudodistance.
Contact pseudodistance is not known to be Lipschitz continuous. Even a ball-box theorem for $k_V$, in the spirit of sub-Riemannian geometry, only implies H\"older regularity of $k_V$. Royden's argument does not apply to $k_V$. It remains an open question whether $k_V=\tilde k_V$. 
\end{remark}

\section{Automorphisms of contact manifolds}%
\label{section:automorphisms}
In this section we study the symmetries of holomorphic contact manifolds. We define contact morphisms and automorphisms.
We introduce the main example that appears throughout the paper.
We establish that the automorphism group of a contact-hyperbolic contact manifold is a finite-dimensional Lie group. 
Finally, we introduce Reeb vector fields and establish their basic properties.
\subsection{Definition of morphisms}
Contact morphisms are the natural maps in contact geometry: they are holomorphic immersions that respect the contact distribution.
\begin{definition}
Take contact manifolds \((M,V)\) and \((M',V')\).
A \emph{contact morphism} \((M,V)\xrightarrow{\Phi}(M',V')\) is a holomorphic immersion \(M\xrightarrow{\Phi}M'\) so that, for every point $p\in M$, $d\Phi_p\left(V_p\right)\subseteq V'_{p'}$ where $p'=\Phi(p)$.
\end{definition}
\begin{lemma}
A contact morphism does not increase the pseudodistance (or the chain pseudodistance) of any two points, and does not increase the Kobayashi--Royden pseudometric on any tangent vector:
\begin{align*}
k_{V'}(\varphi(p),\varphi(q))&\le k_V(p,q),\\
\tilde k_{V'}(\varphi(p),\varphi(q))&\le \tilde k_V(p,q),\\
\kappa_{V'}(\varphi(p),d\varphi_p(v))&\le\kappa_V(p,v),
\end{align*}
for any points $p,q\in M$ and tangent vector $v\in T_p M$.
\end{lemma}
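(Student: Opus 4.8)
The plan is to reduce all three inequalities to the single observation that a contact morphism carries Legendrian discs to Legendrian discs. I denote the contact morphism by $\Phi$, as in the definition, reserving $\varphi$ for discs. Concretely, if $\varphi\colon\D\to M$ is a $V$-disc, then $\Phi\circ\varphi$ is holomorphic and, by the chain rule, $d(\Phi\circ\varphi)_\zeta\bigl(T_\zeta\D\bigr)=d\Phi_{\varphi(\zeta)}\bigl(d\varphi_\zeta(T_\zeta\D)\bigr)\subseteq d\Phi_{\varphi(\zeta)}\bigl(V_{\varphi(\zeta)}\bigr)\subseteq V'_{\Phi(\varphi(\zeta))}$, so $\Phi\circ\varphi$ is a $V'$-disc. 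The same computation shows that $\Phi$ sends any $V$-curve to a $V'$-curve and any admissible path in $\mathcal W_{p,q}$ to an admissible path in $\mathcal W_{\Phi(p),\Phi(q)}$; note that only the inclusion $d\Phi(V)\subseteq V'$ is used, not the injectivity of $d\Phi$.

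First I would establish the metric inequality. Fix $p$ and $v\in V_p$. For every $\varphi\in\mathcal L_p(v)$ the disc $\Phi\circ\varphi$ satisfies $(\Phi\circ\varphi)'(0)=d\Phi_p(\varphi'(0))=\lambda_\varphi\,d\Phi_p(v)$, so $\Phi\circ\varphi\in\mathcal L_{\Phi(p)}(d\Phi_p(v))$ with the \emph{same} scalar $\lambda_\varphi$. Thus $\{\Phi\circ\varphi:\varphi\in\mathcal L_p(v)\}$ is a subfamily of $\mathcal L_{\Phi(p)}(d\Phi_p(v))$, and since an infimum of $1/\lvert\lambda\rvert$ over a larger family can only be smaller, this yields $\kappa_{V'}(\Phi(p);d\Phi_p(v))\le\kappa_V(p;v)$. (When $d\Phi_p(v)=0$ the left-hand side is $0$ and the inequality is trivial.)

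Next I would deduce the two distance inequalities by transporting the defining objects. For $k_V$: given $\gamma\in\mathcal W_{p,q}$, the path $\Phi\circ\gamma$ lies in $\mathcal W_{\Phi(p),\Phi(q)}$, and pointwise $\kappa_{V'}(\Phi(\gamma(t));d\Phi_{\gamma(t)}(\gamma'(t)))\le\kappa_V(\gamma(t);\gamma'(t))$ by the metric inequality just proved; integrating gives $\ell_{V'}(\Phi\circ\gamma)\le\ell_V(\gamma)$, and taking the infimum over $\gamma$ yields $k_{V'}(\Phi(p),\Phi(q))\le k_V(p,q)$. For $\tilde k_V$: given a $V$-chain $\{(\varphi_j,t_j)\}$ between $p$ and $q$, the family $\{(\Phi\circ\varphi_j,t_j)\}$ is a $V'$-chain between $\Phi(p)$ and $\Phi(q)$ with the \emph{same} parameters $t_j$, because the matching conditions $\varphi_j(t_j)=\varphi_{j+1}(0)$ are preserved under composition with $\Phi$. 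Its length is therefore $\sum_j k_\D(0,t_j)$, equal to the length of the original chain, and taking the infimum over all chains gives $\tilde k_{V'}(\Phi(p),\Phi(q))\le\tilde k_V(p,q)$.

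There is no serious obstacle here: once the pushforward of Legendrian discs is in place, every step is pure functoriality. The only point demanding a moment's care is the \emph{direction} of the infimum inequality for $\kappa$, which depends on $\{\Phi\circ\varphi\}$ being a possibly proper subfamily of the competitor set defining $\kappa_{V'}$ at $d\Phi_p(v)$, rather than the whole of it; in particular $\Phi$ need not be surjective or even an immersion onto its image for the argument to go through.
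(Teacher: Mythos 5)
Your proof is correct, and it is precisely the argument the paper has in mind: the lemma is stated there without proof, being regarded as immediate from the definitions, and your verification (pushing forward Legendrian discs, admissible paths, and chains under the contact morphism, then comparing infima over the resulting subfamilies) is exactly the routine functoriality check the authors omit. The two points you flag — the direction of the infimum inequality and the fact that only $d\Phi(V)\subseteq V'$ is needed, not injectivity — are handled correctly.
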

\begin{definition}
Let $M$ be a complex contact manifold with contact structure $V$.
A \emph{$V$-automorphism} of $M$, or \emph{automorphism} of \((M,V)\) or \emph{contact automorphism}, is a bijective contact morphism $(M,V) \to (M,V)$.
It therefore has an inverse contact morphism. We denote by \(\Aut_V M\) the group of automorphisms of the contact manifold $(M,V)$.
\end{definition}
\begin{example}\label{example:big.aut}
We introduce our main example: a contact-hyperbolic contact $3$-fold.
We prove that, up to isomorphism, this is the unique hyperbolic contact $3$-fold with maximal dimensional automorphism group.
Let $M=\B^2_{z,w}\times\C_y$ with contact form
\(\xi=dy+(1-z)^{-2}\,dw\), so \(d\xi=2(1-z)^{-3}dz\wedge dw\).

To describe the automorphism group, it is convenient to pass to the Siegel domain model of \(\B^2\):
\[
\Ha^2:=\set[(Z,W)\in\C^2]{\Re{Z}>|W|^2}.
\]
We also denote this manifold by $S$, because it is the holomorphic symplectic manifold in our theory below.
The Cayley transform \(\B^2\to\Ha^2\),
\[
(z,w)\mapsto F(z,w)=(Z,W)=\left(\frac{1+z}{1-z},\frac{w}{1-z}\right),
\]
is a biholomorphism of the ball with its Siegel domain.
The inverse map is
\[
(z,w)=F^{-1}(Z,W)=\left(\frac{Z-1}{Z+1},\frac{2W}{Z+1}\right).
\]
Under the Cayley transform, \(\xi=dy-\frac{1}{2}W\,dZ+\frac{1}{2}(Z+1)\,dW\), \(d\xi=dZ\wedge dW\).
Consider the map:
\[
(Z,W)\xmapsto{\varphi} (|a|^2 Z + 2 \bar{s}a W + i c +|s|^2, a W + s),
\]
for any \(a \in \C^\ast\), \(c \in \R\) and \(s \in \C\).
We also denote this map by \(\varphi_{a,c,s}\); it is a biholomorphism of $S$, and preserves the holomorphic symplectic form $\omega=dZ\wedge dW$ up to constant factor: 
\[
\varphi_{a,c,s}^\ast\omega=|a|^2a\omega.
\]

Each biholomorphism \(\varphi=\varphi_{a,c,s}\) lifts to a transformation $\Phi=\Phi_{a,c,s,t}$ of $M=S\times\C$:
\[
(Z,W,y)\in M
\xmapsto\Phi
(\varphi(Z,W),Y(Z,W,y)),
\]
for \(t\in\C\) where
\[
Y(Z,W,y)=Y_{a,c,s,t}(Z,W,y)
=
|a|^2ay-\frac{|a|^2s}{2}Z+\frac{a}{2}(c-[|a|^2+|s|^2]+1)W+t.
\]
This map \(\Phi\) is a biholomorphism of $M$ and 
\[
\Phi^*\xi=|a|^2a\xi.
\]
Thus, $\Phi$ preserves the contact structure, and it preserves the contact form precisely when $a=1$.
We prove that the automorphism group of the contact structure $\Aut_V M$ is precisely the set of these maps \(\Phi_{a,c,s,t}\).
Therefore \((M,V)\) has automorphism group of real dimension \(7\).

The Kobayashi pseudometric on the complex unit ball is the one associated to its geometry as complex hyperbolic space, with its Hermitian metric \cite{BL} p.~45, (4.15):
\[
h:=\frac{1}{(1- |z|^2)^2}\bigg(
					(1- |z|^2) \sum_{j=1}^{n} d z_j \otimes d \bar{z}_j
						+ \sum_{j,k=1}^{n} \bar{z}_j z_k d z_j \otimes d\bar{z}_k \bigg)
\]
This pseudometric, applied only the the \(z,w\) coordinates, is also the pseudometric on \(M\), as we will see.
The pseudodistance is not known.
\end{example}
\subsection{The automorphism group as a Lie group}
As in the theory of Kobayashi hyperbolic manifolds, contact hyperbolicity implies that the automorphism group of a contact structure is a finite-dimensional Lie group acting smoothly and properly.
The pseudometric and pseudodistance are invariant under any contact automorphism.
%
\begin{theorem}\label{Thm:Riccardo}
Let $(M,V)$ be a connected contact-hyperbolic complex manifold of complex dimension $2N+1$.
In the compact-open topology, $\Aut_V M$ is a real Lie group of dimension at most $2N^2+5N+4$ acting smoothly and properly on $M$.
\end{theorem}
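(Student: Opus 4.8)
The plan is to follow the classical route by which the isometry group of a hyperbolic manifold is shown to be a Lie group (Kobayashi, via van Dantzig--van der Waerden and Bochner--Montgomery), adapting it to the sub-Finsler distance $k_V$. First I would give $\Aut_V M$ the compact-open topology and view it as a subgroup of the isometry group $\mathrm{Isom}(M,k_V)$, which is legitimate because every $V$-automorphism is a $k_V$-isometry (the Proposition preceding the statement). Since $M$ is $V$-hyperbolic, Proposition~\ref{prop:same-top} guarantees that $k_V$ induces the manifold topology, so $(M,k_V)$ is a connected, locally compact metric space; continuity of $k_V$ (Proposition~\ref{lem:K_V-continuous}) and the local estimate of Lemma~\ref{lem:ball-contact} are what make this work in the absence of a ball--box theorem. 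The van Dantzig--van der Waerden theorem then shows that $\mathrm{Isom}(M,k_V)$, with the compact-open topology, is a locally compact topological group acting continuously and properly on $M$. The subgroup $\Aut_V M$ is closed in $\mathrm{Isom}(M,k_V)$: a $k_V$-isometry obtained as a locally uniform limit of holomorphic $V$-automorphisms is itself holomorphic by Weierstrass, and preserves $V$ by continuity of differentials. Hence $\Aut_V M$ inherits local compactness and the properness of the action.

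Next, to upgrade local compactness to a smooth Lie group structure, I would invoke the Bochner--Montgomery theorem: a locally compact topological group acting effectively and continuously by holomorphic (hence real-analytic, hence $C^\infty$) diffeomorphisms of a manifold is a Lie group acting smoothly. Effectiveness is automatic, since elements of $\Aut_V M$ are genuine maps of $M$. This already yields the first two assertions, namely that $\Aut_V M$ is a real Lie group acting smoothly and properly on $M$.

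For the dimension bound I would use properness to split, at a chosen base point $p$, the identity $\dimR \Aut_V M = \dim\!\big(\Aut_V M\cdot p\big) + \dimR (\Aut_V M)_p$. The orbit has real dimension at most $\dimR M = 4N+2$. The isotropy $(\Aut_V M)_p$ is compact (properness), and its linear isotropy representation $\Phi\mapsto d\Phi_p$ is faithful: by Bochner's linearisation theorem a compact group acting holomorphically and fixing $p$ can be holomorphically linearised near $p$, so an element with identity differential is the identity near $p$, hence everywhere by the identity principle. Thus $(\Aut_V M)_p$ embeds in the linear group at $p$ preserving the contact data: maps that preserve $V_p$, scale the quotient line $T_pM/V_p$, and scale the symplectic form $d\xi|_{V_p}$ compatibly, i.e. a group of the block form $\C^{2N}\rtimes \mathrm{GSp}(2N,\C)$. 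A compact subgroup is conjugate into a maximal compact one; the vector part $\C^{2N}$ contributes nothing, and the reductive part has maximal compact of real dimension at most $2N^2+N+2$ (a copy of $\mathrm{Sp}(N)$ of dimension $2N^2+N$, together with the two circle scalings on the symplectic multiplier and on $T_pM/V_p$). Summing the orbit and isotropy bounds gives $\dimR \Aut_V M \le (4N+2)+(2N^2+N+2)=2N^2+5N+4$, as claimed; the estimate is visibly crude (the two multipliers are in fact linked), consistent with the bound not being sharp.

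The main obstacle is verifying the metric hypotheses underlying the two transformation-group theorems, namely that $k_V$ is a genuine distance inducing the manifold topology on the locally compact space $M$ so that the isometry group is locally compact and acts properly. This is precisely where $V$-hyperbolicity enters, through Proposition~\ref{prop:same-top}, and where Proposition~\ref{lem:K_V-continuous} and Lemma~\ref{lem:ball-contact} substitute for the sub-Riemannian ball--box estimates that are unavailable in this holomorphic sub-Finsler setting. A secondary delicate point is the faithfulness of the linear isotropy representation: because $\kappa_V$ is only a sub-Finsler quasi-norm defined on $V$ rather than a norm on all of $T_pM$, and because $M$ need not be Kobayashi hyperbolic (it typically contains entire curves), one cannot invoke the usual Cartan uniqueness argument and must instead route through compactness of the isotropy and Bochner linearisation as above.
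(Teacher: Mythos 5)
Your proposal is correct and takes essentially the same route as the paper: the paper gets the Lie group structure and properness in one stroke by applying Kaup's theorem (\cite[Satz~2.5]{Kau67}) to the group of biholomorphisms preserving $k_V$ and then passing to the closed subgroup $\Aut_V M$, which is precisely what your van Dantzig--van der Waerden plus Bochner--Montgomery argument (with the Weierstrass closedness step) reconstructs by hand. Your dimension count is the paper's argument almost verbatim: compact isotropy faithfully linearised via Bochner, orbit dimension at most $4N+2$, and linear isotropy contained (up to conjugation) in $Sp_N\times \mathbb S^1\times\mathbb S^1$ of real dimension $2N^2+N+2$, summing to $2N^2+5N+4$.
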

\begin{proof}
Let $G:=\Aut_V M$ and let $G':=\Aut_{k_V} M$ be the group of biholomorphisms of $M$ preserving the metric $k_V$.
As \(k_V\) is a distance inducing the manifold topology, in the compact-open topology, $G'$ is a Lie group acting smoothly and properly on $M$ \cite[Satz~2.5]{Kau67} .
As \(G\subseteq G'\) is a closed subgroup, it is a Lie subgroup \cite[p.~319 Theorem~9.3.7]{HN2012}, so a Lie group acting smoothly and properly.

We bound the dimension of $G$ by bounding the dimensions of the orbits and of the isotropy subgroups. 
For every $p\in  M$, let $G_p:=\set[\Phi\in\Aut_V M]{\Phi(p)=p}$ be the isotropy of $G$. 
By \cite{Boc45}, $G_p$ is compact and is faithfully represented by its linearised version $L_p:=\{d\Phi_p: \Phi\in G_p\}$. 

As $V_p$ is $L_p$-invariant and $L_p$ is compact, the linear subspace $V_p\subseteq T_p M$ has an $L_p$-invariant complement.
As $V$ is a holomorphic contact structure, each subspace $V_p\subset T_p M$ is a complex hyperplane.
So this complement is of complex dimension $1$, so we denote it by $\C$, writing $T_pM=V_p\oplus\C$. 
A local contact form $\xi$ is uniquely defined up to replacing $\xi$ by $\xi'=e^f\xi$ for any holomorphic function $f$ defined near $p$.
On $V_p$, $\left.d\xi'\right|_p=e^{f(p)}\left.d\xi\right|_p$.
So $V_p$ bears the complex symplectic form $\omega=\left.d\xi\right|_p$, defined up to a nonzero complex scalar. The group $L_p$ acts by isometries preserving $\omega$ up to a complex scale factor when restricted to $V_p$.
Hence $L_p$ is a subgroup of $Sp_N\times \mathbb S^1 \times \mathbb S^1$. 
Here \(Sp_N\) denotes the compact real form of the symplectic group.
As $\dimR Sp_N=N(2N+1)$, $\dimR G_p\leq N (2N+1) +1 + 1=2N^2 + N +2$. As $\dimR M=2(2N+1)$, each orbit \(\mathcal{O}(p)\) has dimension bound $\dimR{\mathcal{O}(p)}\leq\dimR{M}=2(2N+1)=4N+2$. 

Denote by \(\mathfrak{g}_p\subseteq\LieG\) the Lie algebras of \(G_p\subseteq G\).
Each $A\in\mathfrak{g}$ generates an $\R$-complete vector field $X_A$whose flow preserves $V$. 
The Lie algebra \(\mathfrak{g}_p\) consists precisely of the elements  \(A\in \mathfrak{g}\) such that $X_A(p)=0$, while 
\[
\LieG\ni A\mapsto X_A(p)\in T_p \mathcal{O}(p)
\]
is onto, so an exact sequence of linear maps
\[
0\to\LieG_p\to\LieG\to T_p \mathcal{O}(p)\to 0.
\]
Differentiating,
\begin{align*}
\dimR G&=\dimR\LieG, \\ 
\dimR G_p&=\dimR\LieG_p, \\ 
\dimR\mathcal{O}(p)&=\dimR T_p \mathcal{O}(p).
\end{align*}
Hence
\begin{align*}
\dimR G
&=\dimR G_p+\dimR\mathcal{O}(p),
\\
&\leq(2N^2+N+2)+(4N+2),
\\
&=2N^2+5N+4.
\end{align*}
\end{proof}

\subsection{Reeb vector fields}\label{subsection:Reeb}
\begin{lemma}[Geiges {\cite[p.~5]{Gei08}}]\label{Lem:existence-of-local-Reeb}
Let $(M,V)$ be a holomorphic contact manifold. 
For each Reeb vector field \(v\) of  \(V\) there is a unique global contact form \(\xi\) so that $\ip_v{\xi}\equiv 1$.
Conversely, if \(\xi\) is a global contact form for $V$, there is a unique Reeb vector field $v$ for $V$ such that \(\xi(v)\equiv 1\).
\end{lemma}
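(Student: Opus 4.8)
The plan is to handle both directions by the same fibrewise recipe: construct the required object pointwise by linear algebra, then upgrade to holomorphic dependence using a local contact form, and finally read off uniqueness from the pointwise characterization.

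For the first direction, I start from a Reeb vector field $v$. At each $p$ the hypothesis $v(p)\notin V_p$ gives a splitting $T_pM=V_p\oplus\C v(p)$, so there is a unique linear functional $\xi_p$ vanishing on $V_p$ with $\xi_p(v(p))=1$; this defines $\xi$ as a section of $T^*M$ with $\ker\xi=V$ and $\iota_v\xi\equiv 1$, and uniqueness is immediate from this pointwise description. To see that $\xi$ is a holomorphic contact form I would work near $p$ where $V=\ker\eta$ for a local contact form $\eta$; since $v\notin\ker\eta$ the function $\eta(v)$ is holomorphic and nowhere zero, and by the pointwise uniqueness $\xi=\eta/\eta(v)$, so $\xi$ is holomorphic. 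Writing $\xi=f\eta$ with $f=1/\eta(v)$ nonvanishing, the $df\wedge\eta$ contributions to $(d\xi)^N$ die (they force $\eta$ to appear twice), giving $\xi\wedge(d\xi)^N=f^{N+1}\,\eta\wedge(d\eta)^N\ne 0$, so $\xi$ is again a contact form.

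For the converse, fix a global contact form $\xi$. The key linear-algebra input is that $\xi\wedge(d\xi)^N\ne 0$ is equivalent to $d\xi|_{V_p}$ being a nondegenerate symplectic form on $V_p$. On the odd-dimensional space $T_pM$ the antisymmetric form $d\xi|_p$ then has even rank exactly $2N$, so its radical is a line $K_p$, and nondegeneracy on $V_p$ forces $K_p\cap V_p=0$; since $\xi_p$ is nonzero on $K_p$ there is a unique $v(p)\in K_p$ with $\xi_p(v(p))=1$, which is the pointwise vector satisfying $\iota_v d\xi=0$ and $\xi(v)=1$. Uniqueness follows because the difference of two such vectors lies in $V_p$ and in the radical, hence vanishes. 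For holomorphic dependence I would choose locally a holomorphic field $w$ with $\xi(w)=1$ and look for $v=w+u$ with $u\in V$; since $\iota_v d\xi$ automatically vanishes on $v$, requiring $\iota_v d\xi=0$ reduces to the condition $(\iota_u d\xi)|_V=-(\iota_w d\xi)|_V$ on the complementary summand $V_p$, and because $u\mapsto(\iota_u d\xi)|_V$ is a holomorphic bundle isomorphism $V\to V^*$ (precisely where nondegeneracy of $d\xi|_V$ enters), its inverse produces $u$, and hence $v$, holomorphically. Finally $v$ is a Reeb vector field: $\xi(v)=1$ gives $v\notin V$, and Cartan's formula $\mathcal L_v\xi=\iota_v d\xi+d(\xi(v))=0$ shows the flow preserves $\xi$, hence $V=\ker\xi$.

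I expect the only genuine obstacle to be the holomorphy in the converse direction; everything else is pointwise uniqueness followed by gluing of the local constructions. The $w+u$ splitting isolates that difficulty into inverting the holomorphic isomorphism $V\to V^*$ induced by $d\xi|_V$, which is exactly the point where the contact (nondegeneracy) hypothesis is indispensable. As a consistency remark that also links the two directions, in the first direction one recovers $\iota_v d\xi\equiv 0$ for free: the flow preserving $V=\ker\xi$ forces $\phi_t^\ast\xi=f_t\xi$, so $\mathcal L_v\xi=g\,\xi$ for some holomorphic $g$; contracting with $v$ and using $\iota_v\xi\equiv 1$ together with $\iota_v\iota_v d\xi=0$ yields $g=0$, whence $\mathcal L_v\xi=0$ and $\iota_v d\xi\equiv 0$.
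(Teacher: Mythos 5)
Your proof is correct. There is, however, nothing in the paper to compare it against: the lemma is stated as a quotation from Geiges \cite{Gei08} and no proof is given there, so your write-up in effect supplies the details that the citation leaves implicit (Geiges works with smooth real contact manifolds; the point of interest here is the holomorphic category). Your argument is the standard one, correctly adapted: fibrewise linear algebra in both directions (the splitting $T_pM=V_p\oplus\C v(p)$ for existence and uniqueness of $\xi$, and the parity argument showing the radical of $d\xi_p$ is a line transverse to $V_p$ for existence and uniqueness of $v$), with the only genuinely holomorphic issue correctly isolated and resolved: the map $u\mapsto(\iota_u d\xi)|_V$ is a holomorphic bundle isomorphism $V\to V^*$, so its inverse, and hence $v=w+u$, depends holomorphically on the point; pointwise uniqueness then glues the local constructions. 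Your closing consistency remark, that any Reeb vector field automatically satisfies $\iota_v d\xi\equiv 0$ and hence $\mathcal{L}_v\xi=0$, is precisely the content of the paper's comment immediately following the lemma, where the Cartan formula is invoked to conclude that every Reeb vector field is a symmetry of its contact form.
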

By the Cartan formula
\[
\LieDer_v\xi=\ip_v{d\xi}+d(\ip_v{\xi}),
\]
every Reeb vector field is a symmetry of its contact form.
Conversely, it is the unique symmetry of its contact form with $\ip_v{\xi}=1$.
\begin{remark}
Suppose that $V$ is a holomorphic contact structure on a complex manifold $M$ and that $v$ is a $\C$-complete Reeb vector field for $V$.
So $v(p)\neq 0$ for all $p\in M$.
Therefore the flow of $v$ acts locally freely, but perhaps not freely.
At each point $p\in M$, the set of times $t\in\C$ at which the flow of $v$ takes $p$ to itself form a discrete subgroup of $\C$.
Every vector field is invariant under its own flow, so that subgroup is the same for all $p$ in any flow line of $v$.
So $(M,V,v)$ is a Reeb manifold just when that subgroup is $\set{0}$ for all $p\in M$, that is, when no point returns to itself after flowing for a nonzero complex time.
The methods of this paper extend readily to allow for a nontrivial subgroup, as long as it is the same at every point $p\in M$.
\end{remark}
In a contact-hyperbolic contact structure, Reeb vector fields are proper:
\begin{proposition}\label{prop:3bound}
Let $(M,V,v)$ be a contact-hyperbolic Reeb manifold of complex dimension $2N+1$.
Then it is proper and
\[
2\leq \dimR \Aut_V M\leq 2N^2+5N+4.
\]
\end{proposition}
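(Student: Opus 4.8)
The plan is to read off the upper bound from Theorem~\ref{Thm:Riccardo}, obtain the lower bound from the flow of $v$, and then devote the real work to showing that the $\C$-action is proper. For the bounds: since $(M,V,v)$ is a Reeb manifold of dimension $2N+1$ with $V$ hyperbolic, Theorem~\ref{Thm:Riccardo} immediately gives $\dimR\Aut_V M\le 2N^2+5N+4$. For the lower bound, let $\Phi$ be the flow of $v$. Because $v$ is a Reeb vector field its flow preserves $V$ (indeed preserves $\xi$, since $\LieDer_v\xi=0$), so each $\Phi_t$ lies in $\Aut_V M=:G$, and freeness of the Reeb manifold makes $\rho\colon t\mapsto\Phi_t$ an injective homomorphism $\C\to G$. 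As $v$ is $\C$-complete and nowhere vanishing, the two real-complete holomorphic vector fields $v$ and $\sqrt{-1}\,v$ lie in the Lie algebra $\LieG$ of $G$ and are $\R$-linearly independent at every point; hence $\dimR\Aut_V M=\dimR\LieG\ge 2$.

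The substantive claim is that $(M,V,v)$ is proper, i.e.\ that the free $\C$-action is proper. I would reduce this to the statement that $\rho(\C)$ is \emph{closed} in $G$. The $\C$-action map $\C\times M\to M\times M$, $(t,p)\mapsto(\Phi_t(p),p)$, factors as $\rho\times\mathrm{id}$ followed by the $G$-action map $G\times M\to M\times M$; the latter is proper by Theorem~\ref{Thm:Riccardo}, and since $M$ is locally compact the composite is proper precisely when $\rho$ is proper. For an injective continuous homomorphism of Lie groups this is equivalent to $\rho(\C)$ being a closed subgroup (in which case $\rho$ is automatically a closed embedding). So everything comes down to closedness of the one-parameter subgroup generated by the Reeb flow.

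To prove $\rho(\C)$ closed I would argue by contradiction and first reformulate non-closedness dynamically. If the $\C$-action were not proper, there are $p_n\to p$ and $t_n$ with $|t_n|\to\infty$ and $\Phi_{t_n}(p_n)\to q$. Using that each $\Phi_{t_n}$ is a $k_V$-isometry and that $k_V$ induces the manifold topology (Proposition~\ref{prop:same-top}), $k_V(\Phi_{t_n}(p),\Phi_{t_n}(p_n))=k_V(p,p_n)\to 0$ forces $\Phi_{t_n}(p)\to q$; properness of the $G$-action then yields $\Phi_{t_n}\to g$ in $G$ along a subsequence. A standard difference extraction ($w_m=\Phi_{s_{k(m)}}\Phi_{s_m}^{-1}$ from $s_n:=t_n-t_1$) produces $w_m\in\C$ with $|w_m|\to\infty$ and $\Phi_{w_m}\to\mathrm{id}$ in $G$. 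Equivalently, $A:=\overline{\rho(\C)}$ is a connected abelian Lie subgroup of dimension $\ge 3$; since a vector group admits no dense proper connected subgroup, $A$ must contain a positive-dimensional compact torus, and then for every $p$ the orbit $\rho(\C)\cdot p=\Phi_{\C}(p)$ is contained in the \emph{compact} orbit $A\cdot p$. Thus $\zeta\mapsto\Phi_\zeta(p)$ would be a nonconstant entire curve with relatively compact image, tangent everywhere to $v$ and satisfying $\Phi_{w_m}(p)\to p$ with $|w_m|\to\infty$.

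The main obstacle is excluding exactly this last configuration: I must show that a $V$-hyperbolic Reeb manifold admits no bounded nonconstant Reeb orbit. This is where genuine $V$-hyperbolicity is indispensable, and not merely properness of the $G$-action --- a compact torus acts properly as well, so the metric magnitude of $k_V$ alone is consistent with a recurrent orbit (the almost-periods $w_m$ make $k_V(p,\Phi_{w_m}(p))\to 0$). I would therefore exclude the bounded orbit by the localization/normal-families technique that the introduction announces for the hyperbolicity comparison: passing to Darboux charts along the compact orbit closure, the sub-Finsler ball--box lower bound $k_V(q,\Phi_s(q))\gtrsim\sqrt{|s|}$ for small $s$ controls short Reeb displacements, while a Brody reparametrization of the recurrent entire curve would produce a limiting object incompatible with $k_V$ being a true distance. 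Making this contradiction precise is the hard part; once it is in place, $\rho(\C)$ is closed, the $\C$-action is proper, and $(M,V,v)$ is a proper Reeb manifold satisfying $2\le\dimR\Aut_V M\le 2N^2+5N+4$.
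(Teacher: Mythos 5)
Your treatment of the two bounds coincides with the paper's: the upper bound is read off Theorem~\ref{Thm:Riccardo}, and the lower bound comes from the injective homomorphism $\rho\colon\C\to\Aut_V M$ given by the Reeb flow (injective by freeness), exactly as in the paper. The divergence is in the properness claim, which the paper dispatches in one sentence: the Reeb flow preserves $V$, hence lies in $\Aut_V M$, which acts smoothly and properly on $M$ by Theorem~\ref{Thm:Riccardo}, so the flow acts properly. You instead insist on first proving that $\rho(\C)$ is closed in $\Aut_V M$. As an abstract point this is a legitimate concern (a non-closed subgroup of a properly acting group need not act properly, and freeness does not help: an irrational line acting on a torus is free, sits inside the compact translation group, and is not proper), and your equivalences --- properness of the $\C$-action $\Leftrightarrow$ properness of $\rho$ $\Leftrightarrow$ closedness of $\rho(\C)$ --- are correct. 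But having raised the issue, you must resolve it, and your proposal does not.

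Concretely, two things go wrong in your closing argument. First, the reduction is faulty: writing $A=\overline{\rho(\C)}\cong\R^a\times T^b$ with $b\ge 1$, the orbit $A\cdot p\cong A/A_p$ is compact only when $a=0$, so you cannot conclude that every Reeb orbit is relatively compact; when $a=1$ only a real line's worth of the flow winds into a compact torus orbit, and your ``bounded nonconstant Reeb orbit'' configuration is not what you actually need to exclude. Second, and more seriously, the two tools you invoke to derive the final contradiction are unavailable: the ``sub-Finsler ball--box lower bound $k_V(q,\Phi_s(q))\gtrsim\sqrt{|s|}$'' is precisely what the paper disclaims (``We can not rely on a `ball-box theorem' as in sub-Riemannian geometry because $\kappa_V$ is merely a sub-Finsler upper semicontinuous metric''), and a Brody reparametrization of the recurrent orbit cannot conflict with $V$-hyperbolicity, because Reeb orbits are \emph{transverse} to $V$, not tangent to it: $k_V$ and $V$-hyperbolicity constrain only $V$-tangent curves, and indeed $M$ is never Kobayashi hyperbolic, so a nonconstant entire limit curve tangent to the Reeb direction is not by itself absurd. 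So the one implication that is the entire content of the proposition beyond Theorem~\ref{Thm:Riccardo} --- that $V$-hyperbolicity forces the free Reeb $\C$-action to be proper --- is left unproved in your proposal.
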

\begin{proof}
The flow acts properly by Theorem~\ref{Thm:Riccardo}.
The upper bound is given by Theorem~\ref{Thm:Riccardo}. 
The lower bound arises from the flow of the Reeb vector field.
\end{proof}
\begin{remark}
Forstnerič \cite{For17b} proves the existence of a contact-hyperbolic contact structure $V$ on $\C^3$. His structure does not admit a \(\C\)-complete Reeb vector field: by Theorem~\ref{Thm:Reeb-principal-bdle}, $\C^3$ would have a holomorphic fibre bundle map to a Kobayashi hyperbolic complex surface $S$.
But every holomorphic map from a complex Euclidean space of any dimension to a Kobayashi hyperbolic manifold is constant.
We conjecture that the group of automorphisms of this contact manifold is discrete.
\end{remark}

\section{Contact symplectic lifts}%
\label{section:contact.symplectic.lifts}
In this section, we show that every proper Reeb manifold, after we quotient by the Reeb flow, becomes the total space of a holomorphic principal bundle over a holomorphic symplectic manifold.
The resulting pair of a contact manifold with its symplectic quotient is a \emph{contact symplectic lift}.
We prove that the notion of contact symplectic lift is the essential bridge between contact manifolds and symplectic manifolds, from which we can relate their pseudometrics.
\subsection{Definition of contact symplectic lifts}
\begin{definition}\label{Def:contact-sympl-lift}
A \emph{contact symplectic lift} \((M,V,v,S,\omega)\) is: 
\begin{enumerate}
\item 
a Reeb manifold $(M,V,v)$, say with associated contact form \(\xi\) and 
\item
a holomorphic symplectic manifold $(S,\omega)$ and 
\item 
a $\C$-principal bundle $\pi\colon M\to S$, whose \(\C$-action is the flow of the Reeb vector field, so that
\item $d\xi=\pi^\ast\omega$.
\end{enumerate}
We denote the contact symplectic lift by the data $(M, V, v, S, \omega)$, with the projection map $\pi$ understood.
\end{definition}
\[
\begin{tikzpicture}[
  double/.style={
  		draw, 
  		anchor=north,
  		rectangle split,
  		rectangle split parts=2,
  		rounded corners,
  		rectangle split draw splits=false},
	BlankBox/.style={
		double,
		fill=white,
		draw=white,
		text=white},
	ReebBox/.style={
		double,
		fill=gray!15,
		draw=gray},
	ProperBox/.style={
		double,
		fill=gray!35,
		draw=gray,
		text=black},
	HyperbolicBox/.style={
		double,
		fill=gray!55,
		draw=gray,
		text=black},
	CompleteBox/.style={
  		anchor=text, 
		rectangle,
  		rounded corners,
		fill=gray!75,
		draw=gray,
		text=black},
  ]
  \coordinate (Reeb) at (0,0);
  \node[ReebBox,align=center] at (Reeb) {%
  Reeb manifolds
    \nodepart{second}
      \tikz{\node[ProperBox,align=center] (ProperReebNode) at (Reeb) {%
      proper
      \nodepart{second}
      	\tikz{\node[HyperbolicBox,align=center] {%
     	 		hyperbolic
   	   \nodepart{second}
      		\tikz{\node[CompleteBox,align=center] {%
     	 			complete   	   
		      };
	      	}      
   	   
	      };
      	}      
      };
      }
  };
  \coordinate (CSL) at ($ (ProperReebNode.north east)+(3cm,0) $);
	\node[BlankBox,align=center] (CSLNode) at (CSL) {
	Reeb manifolds
	\nodepart{second}
	\tikz{
  \node[ProperBox,align=center] (InnerCSLNode) at (CSL) {%
	contact symplectic lifts
    \nodepart{second}
      	\tikz{\node[HyperbolicBox,align=center] {%
     	 		hyperbolic
   	   \nodepart{second}
      		\tikz{\node[CompleteBox,align=center] {%
     	 			complete   	   
		      };
	      	}      
   	   
      };
      }
  };
  }
  };
  \path[very thick,stealth-stealth] 
  (ProperReebNode.east|-CSLNode.west)
  edge 
  (CSLNode.west);
  \coordinate (symplectic) at ($ (CSLNode.south)+(0,-.5) $);  \node[ProperBox,align=center] (symplecticNode) at (symplectic) {%
	$C^{\infty}$-exact holomorphic symplectic manifolds
    \nodepart{second}
      	\tikz{\node[HyperbolicBox,align=center] {%
     	 		hyperbolic
   	   \nodepart{second}
      		\tikz{\node[CompleteBox,align=center] {%
     	 			complete   	   
		      };
	      	}      
   	   
      };
      }
  };
  \path[very thick,-stealth] 
  (CSLNode.south)
  edge 
  (symplecticNode.north);
\end{tikzpicture}
\]
\begin{remark}
Because \(d\xi=\pi^\ast\omega$, the total space $M$ has complex dimension $2N+1$, while the base $S$ has complex dimension $2N$, for a unique integer $N\ge 1$.
\end{remark}
\begin{remark}
The fibres of  $\pi\colon M\to S$ are contractible, so $M$ is homotopically equivalent to $S$. In particular, $M$ is contractible if and only if $S$ is.
\end{remark}
Given a \(C^\infty\) (holomorphic) map \(M\to S\) of manifolds, a \(C^{\infty}\) (holomorphic) differential form \(\theta_M\) on \(M\) is \emph{basic} if it is the pullback of a \(C^{\infty}\) (holomorphic) differential form \(\theta_S\) on \(S\).
If \(M\to S\) is a \(C^{\infty}\) (holomorphic) surjective submersion, pullback is injective.
So \(\theta_S\) is then unique.
A \emph{vertical vector field} is a $C^\infty$ (holomorphic) vector field $v$ defined on an open subset of $M$, tangent to the fibres of $M\to S$.
A differential form \(\theta_M\) on \(M\) is \emph{semibasic} if, for any  vertical vector field \(v\), $0=\ip_v\theta_M$.
Every basic form is semibasic.
\begin{lemma}[Libermann \& Marle p.~57 Proposition~3.6, Ivey \& Landsberg p.~339]\label{lemma:semibasic}
Suppose that $M\to S$ is a $C^\infty$ (holomorphic) submersion with connected fibres.
A (holomorphic) differential form on \(M\) is basic if and only if both the form and its exterior derivative are semibasic.
\end{lemma}
\begin{proof}
This is well known, but we prove it for completeness.

Suppose that $v$ is a $C^\infty$ (holomorphic) vector field tangent to the fibres of $M\to S$.
By the Cartan formula, 
\[
\LieDer_v\theta_M=\ip_v{d\theta_M}+d(\ip_v{\theta_M})=0,
\]
the two vanishing conditions force $\theta_M$ to be invariant under the flow of $v$.

Suppose we can solve the problem locally.
Take an open cover \(\{S_\alpha\}\) of \(S\) and an open cover \(\{M_\alpha\}\) of \(M\).
Suppose that \(M\to S\) restricts to each \(M_\alpha\) to a surjective submersion \(M_\alpha\to S_\alpha\) with connected fibres.
Let \(\theta_{M_\alpha}\) be the restriction of \(\theta_M\) to \(M_\alpha\).
Since we assume we can locally solve the problem, we have a form \(\theta_{S_\alpha}\) on \(S_\alpha\) pulling back to \(\theta_{M_\alpha}\).

Consider the flow \(\Phi_t\) of some vertical vector field, say defined on an open set \(U\).
That flow preserves \(\theta_M\).
Since the flow commute with projection to \(S\), the same \(\theta_{S_\alpha}\) form pulls back to \(\Phi_t(M_\alpha\cap U)\) to agree with \(\theta_M\).
So we can expand the open set \(M_\alpha\) to be invariant under all flows of vertical vector fields.
Since the fibers of \(M\to S\) are connected, we can assume that \(M_\alpha=\pi^{-1}S_\alpha\).

Let \(\theta_{S_{\alpha\beta}},\theta_{M_{\alpha\beta}}\) be the restrictions of \(\theta_{S_\alpha},\theta_{M_\alpha}\) to \(S_{\alpha\beta}:=S_\alpha\cap S_\beta\) and \(M_{\alpha\beta}:=M_\alpha\cap M_\beta=\pi^{-1}S_{\alpha\beta}\).
So both \(\theta_{S_{\alpha\beta}}\) and \(\theta_{S_{\beta\alpha}}\) pullback to \(\theta_{M_{\alpha\beta}}=\theta_{M_{\beta\alpha}}\).
By injectivity of pullback, \(\theta_{S_{\alpha\beta}}=\theta_{S_{\beta\alpha}}\).
The differential forms on open sets of a manifold are a sheaf.
Define \(\theta_S\) by 
\[
\theta_S:=\theta_{S_\alpha} \quad \text{ on \(S_\alpha\)}.
\]

The claim is therefore local on $S$ and $M$.
We can pick (holomorphic) coordinates in which $\pi$ has the form
\[
(x,y)\xmapsto{\pi}x.
\]
If $I=(i_1,\dots,i_p)$, let
\[
dx^I=dx^{i_1}\wedge\dots\wedge dx^{i_p},
\]
and so on.
Write
\[
\theta_M=\sum a_{IJ}(x,y)dx^I\wedge dy^J.
\]
Take $v=\partial_{y^j}$ and expand out $0=\ip_v{\theta_M}$ to check that there are no $dy^j$ factors:
\[
\theta_M=\sum a_I(x,y)dx^I.
\]
Expand out $0=\ip_v{\theta_M}$ to check that $a_I(x,y)=a_I(x)$:
\[
0=\ip_v{d\theta_M}=\sum\frac{\partial a_I}{\partial y^j}dx^I.
\]
The form $\theta_M$ is invariant under the flow of $v$, where defined, and is the pullback of a form from $S$.
\end{proof}

\begin{lemma}\label{lemma:Reeb.to.csl}
A Reeb manifold \((M,V,v)\) is proper if and only if it admits a contact symplectic lift.
\end{lemma}
Foreman\cite[p.~187 Theorem 6.2]{For00} proves a similar result, with the same proof, but assuming compact fibres and base.
For real contact structures, Boothby and Wang \cite{BW} have a similar result and proof. 
\begin{proof}
If \((M,V,v,S,\omega)\) is a contact symplectic lift, then \((M,V,v)\) is clearly a proper Reeb manifold.
Suppose that \((M,V,v)\) is a proper Reeb manifold.
The flow of \(v\) is a free and proper holomorphic \(\C\)-action.
Let $S:=M/\C$ and denote the projection by $\pi\colon  M\to S$: $\pi(p)=\C p$.
The map $\pi\colon M\to S$ is a holomorphic $\C$-principal bundle \cite[p.~286, Proposition~4.1.23]{AM1978} for a unique holomorphic structure on $S$.

The contact form \(\xi\) yields a holomorphic \(2\)-form \(d\xi\) invariant under the flow of the Reeb vector field $v$.
By the Cartan formula,
\[
0=\LieDer_v\xi=\ip_v{d\xi}+d(\ip_v\xi)=\ip_v{d\xi}.
\]
So \(\ip_v d\xi=0\).
By lemma~\vref{lemma:semibasic}, \(d\xi\) is the pullback of a unique holomorphic \(2\)-form \(\omega\) on\(S\).
Pullback: $\pi^*d\omega=d\pi^*\omega=d(d\xi)=0$, so $d\omega=0$.

%
Let $N:=(1/2)\dimC{S}$; $\omega$ is non-degenerate because
\[
\xi\wedge\pi^*(\omega^N)=\xi\wedge (d\xi)^N\neq 0.
\]
Thus, $(M, V, v, S, \omega)$ is a contact symplectic lift.
\end{proof}

\begin{lemma}\label{lemma:csl.iso}
Suppose that \((M,V,v)\) is the Reeb manifold of two contact symplectic lifts \((M,V,v,S,\omega)\), \((M,V,v,S',\omega')\).
Then there is a unique biholomorphism \(S\xrightarrow{\varphi}S'\) so that the diagram
\[
\begin{tikzcd}
&M\arrow[dl]\arrow[dr]&\\
S\arrow[rr,"\varphi"]&&S'
\end{tikzcd}
\]
commutes and so that $\varphi^*\omega'=\omega$.
\end{lemma}
\begin{proof}
The two lifts have the same vector field $v$.
The fibres of $M\to S$ and $M\to S'$ are the orbits of $v$.
So the maps $M\to S$ and $M\to S'$ have the same fibres.
The map $M\to S'$ is constant on the fibres of $M\to S$.
So the map $M\to S'$ descends to a map $\varphi\colon S\to S'$, and vice versa, so $\varphi$ is a bijection.

The map \(M\to S\) is a holomorphic submersion, so admits local holomorphic sections near any point of \(S\).
Compose any section with \(M\to S'\) to see that \(\varphi\colon S\to S'\) is holomorphic.
By the same reasoning, \(\varphi^{-1}\colon S'\to S\) is holomorphic.
So \(\varphi\) is a biholomorphism.
The form $\omega$ is the unique holomorphic symplectic form with \(\pi^*\omega=d\xi$, because pullback of forms by any submersion is injective.
So $\varphi^*\omega'=\omega$.
\end{proof}
\begin{definition}\label{def:iso.csl}
An \emph{isomorphism} of contact symplectic lifts
\[
(M,V,v,S,\omega)\to (M',V',v',S',\omega')
\]
is an isomorphism of their associated Reeb manifolds 
\[
(M,V,v)\to (M',V',v'),
\]
to which we attach the induced symplectic biholomorphism 
\[
(S,\omega)\to (S',\omega').
\]
\end{definition}
\begin{example}
We return to example~\vref{example:big.aut}: $M=\B^2_{z,w}\times\C_y$ with contact form 
\[
\xi=dy+(1-z)^{-2}dw.
\]
Let $S:=\B^2_{z,w}$ and note that
\[
\omega=d\xi=\frac{2}{(1-z)^3}dz\wedge dw
\]
is a holomorphic symplectic form on $S$.
\end{example}
\subsection{Local contact symplectic coordinates}
We prove a Darboux theorem for contact symplectic lifts: a local normal form.
\begin{definition}\label{definition:trivial}
A contact symplectic lift $(M, V, v, S, \omega)$ is \emph{trivialised} if
\begin{itemize}
\item
$M=S\times\C$ as a holomorphically trivial principal $\C$-bundle and
\item
the bundle map is $\pi(x,y)=x$ for $(x,y)\in S\times\C$ and 
\item
the bundle action is translation in $y$, so $v=\partial_y$.
\end{itemize}
A contact symplectic lift $(M, V, v, S, \omega)$ is \emph{trivial} if it admits a \emph{trivialisation}, \ie an isomorphism with a trivialised contact symplectic lift.
In other words, by isomorphisms of the contact manifolds, and of the symplectic manifolds, we match the principal $\C$-actions and the Reeb vector fields.
\end{definition}
\begin{definition}
Take a holomorphic symplectic manifold $(S,\omega)$.
A \emph{holomorphic symplectic potential} is a holomorphic $1$-form $\nu$ on $S$ with $d\nu=\omega$.
\end{definition}
\begin{definition}
Let $(M,V,v,S,\omega)$ be a contact symplectic lift with holomorphic symplectic potential $\nu$.
The contact symplectic lift is \emph{$\nu$-fit} if the bundle $M\to S$ admits a trivialisation in which, in the notation of Definition~\ref{definition:trivial}, the contact form $\xi$ on $M$ associated to the Reeb vector field $v=\partial_y$ is $\xi=dy+\pi^*\nu$. 
A contact symplectic lift $(M,V,v,S,\omega)$ is \emph{fit} if it is $\nu$-fit for some holomorphic symplectic potential $\nu$.
\end{definition}
Different holomorphic symplectic potentials may yield different trivialisations. However we have the following preliminary result:
\begin{lemma}[Independence of potential]\label{lemma:nu.fit}
Take a contact symplectic lift $(M,V,v,S,\omega)$ and two holomorphic symplectic potentials $\nu$ and $\nu'$.
Suppose that $H^1(S,\C)=0$.
Then the contact symplectic lift is $\nu$-fit if and only if it is $\nu'$-fit.
\end{lemma}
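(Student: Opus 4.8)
The plan is to reduce everything to the observation that two holomorphic symplectic potentials for the same $\omega$ differ by a closed holomorphic $1$-form, and that under the hypothesis $H^1(S,\C)=0$ such a form is the differential of a holomorphic function. First I would set $\alpha:=\nu-\nu'$; since $d\nu=d\nu'=\omega$, the form $\alpha$ is a closed holomorphic $1$-form on $S$. Viewing $\alpha$ as a smooth closed complex-valued $1$-form, the vanishing $H^1(S,\C)=0$ supplies a smooth function $h\colon S\to\C$ with $\alpha=dh$. Decomposing $dh=\partial h+\bar\partial h$ and comparing types with the $(1,0)$-form $\alpha$ forces $\bar\partial h=0$ and $\partial h=\alpha$; hence $h$ is holomorphic and $\nu-\nu'=dh$ with $h\in\O(S)$.

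Next, assume the contact symplectic lift is $\nu$-fit, and fix a trivialization $M=S\times\C$ as in Definition~\ref{definition:trivial}, with coordinates $(x,y)$, bundle map $\pi(x,y)=x$, Reeb field $v=\partial_y$, and $\xi=dy-\pi^*\nu$. I would produce a new trivialization by the fiber shift $\Phi\colon (x,y)\mapsto(x,\tilde y)$ with $\tilde y:=y-h\circ\pi$. This $\Phi$ is a biholomorphism of $S\times\C$ that commutes with translation in the fiber, since $\tilde y$ differs from $y$ by the $y$-independent term $h\circ\pi$; hence it matches up the principal $\C$-actions and carries $v=\partial_y$ to $\partial_{\tilde y}$, so $(x,\tilde y)$ is again a trivialization in the sense of Definition~\ref{definition:trivial}. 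A direct computation gives $dy=d\tilde y+\pi^*dh=d\tilde y+\pi^*(\nu-\nu')$, whence
\[
\xi=dy-\pi^*\nu=d\tilde y+\pi^*(\nu-\nu')-\pi^*\nu=d\tilde y-\pi^*\nu'.
\]
Thus in the new trivialization $\xi$ has the required form and the lift is $\nu'$-fit.

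Finally, since the roles of $\nu$ and $\nu'$ are symmetric (replace $h$ by $-h$), the same argument shows that $\nu'$-fit implies $\nu$-fit, giving the desired equivalence. The only genuine point requiring the hypothesis is the passage from a closed holomorphic $1$-form to a \emph{holomorphically} exact one: de Rham triviality over $\C$ provides a smooth primitive, and the type decomposition upgrades it to a holomorphic primitive. Everything else is the routine verification that the fiber shift is an admissible change of trivialization, and I expect no real obstacle beyond the bookkeeping of signs and the $(1,0)$-type argument.
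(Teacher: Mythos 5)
Your proof is correct and follows essentially the same route as the paper's: a smooth primitive $h$ of the closed form $\nu'-\nu$ obtained from $H^1(S,\C)=0$, upgraded to a holomorphic primitive by the $(1,0)$-type argument, followed by the fiber shift $y\mapsto y\pm h\circ\pi$ to pass between the $\nu$-fit and $\nu'$-fit trivializations. The extra details you supply (equivariance of the fiber shift, the symmetry remark for the converse) are exactly the bookkeeping the paper leaves implicit.
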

\begin{proof}
As $d\nu=d\nu'=\omega$, $d(\nu'-\nu)=0$.
But $H^1(S,\C)=0$ so $\nu'=\nu+dh$ for some $C^\infty$ function $h\colon S\to\C$.
As $dh=\nu'-\nu$ is $(1,0)$, $h$ is holomorphic.
Write $\xi=dy+\pi^*\nu$ and set $y'(x,y):=y-h(x)$; check that 
\[
dy'+\pi^*\nu'=dy-d\pi^*h+d\pi^*h+\pi^*\nu=dy+\pi^*\nu.
\]
So \((x,y)\mapsto (x,y+h(x))\) trivialises $M$ as a $\nu'$-fit contact symplectic lift.
\end{proof}
\begin{definition}
Take a holomorphic symplectic manifold $(S,\omega)$ of complex dimension $2N$. \emph{Holomorphic Darboux coordinates} on an open subset $U\subseteq S$ are local holomorphic coordinates $(z,w)=(z_1,\ldots, z_N, w_1,\ldots, w_N)$ on $U$ such that
\begin{equation}\label{Eq:Darboux-down-for-lift}
\omega|_U=d\nu, \quad \nu:=-\sum_{j=1}^N w_j dz_j,
\end{equation}
\ie $\nu$ is a holomorphic symplectic potential on $U$.
\end{definition}
The holomorphic Darboux theorem \cite[section 8.1]{Cannas}, \cite{Darb82a},\cite{Darb82b}, \cite[section 22]{GS1990}, \cite[section 3.2]{McDuff17}, \cite{Moser} ensures that every point of any holomorphic symplectic manifold, lies in the domain of holomorphic Darboux coordinates.
We now extend the Darboux theorem to contact symplectic lifts; a global version appears in Theorem~\vref{theorem:fit}.
\begin{lemma}[Local contact symplectic coordinates]\label{Lem:contact-sympl-coordinates}
Take a contact symplectic lift $(M, V, v, S, \omega)$.
Every point of $S$ lies in a contractible Stein open set $U\subseteq S$.
We can pick \(U\) to lie in the domain of Darboux coordinates $(z,w)$.
Hence \(\omega=d\nu\) where
\[
\nu=-\sum_{j=1}^N w_j\,dz_j.
\]
Pull back these coordinates to the preimage $\pi^{-1}U\subseteq M$ of that domain.
There is a holomorphic function $y$ on \(\pi^{-1}U\) so that $(z,w,y)$ are coordinates in which the contact form of the Reeb manifold \((M,V,v)\) is \(\xi=dy+\nu\).
The Reeb vector field is \(v=\partial_y\).
Hence \(\pi^{-1}U\to U\) is a trivialised contact symplectic lift.
\end{lemma}
\begin{proof}
Choose a point of $S$, Darboux coordinates \((z,w)\) near that point, and a contractible Stein open set $U$ in the domain of those coordinates. 
We can replace $S$ by that open set, $M$ by its preimage.
So assume that $S$ is contractible and Stein.
Every holomorphic principal bundle on $S$ is equivariantly holomorphically trivial \cite[p.~356 Theorem 8.2.1]{For17}.
Up to a $\C$-equivariant biholomorphism, $M=S\times\C_y$ and
$z,w,y$ are holomorphic coordinates with
\[
\omega=d\nu, \quad \nu:=-\sum_{j=1}^N w_j dz_j.
\]
and $\C$ acts by translation in $y$.
The projection $\pi$ is $\pi(z, w, y)=(z, w)$.
The flow of the Reeb vector field is the $\C$-action, so $v=\partial_y$, and \(d(\xi-(dy+\nu))=0\).
Since \(M\) is contractible, \(H^1(M,\C)=0\) so there is a holomorphic function $h$ on $M$ so that 
\[
\xi=dy+\nu+dh=d(y+h)+\nu.
\]
But \(\LieDer_v h=\ip_v{dh}=\ip_v(\xi-dy-\nu))=0\), so $h$ is constant on the fibres of $M\to S$, i.e. \(h=h(z,w)\).
Let \(\tilde{y}:=y+h\).
So \((z,w,\tilde{y})\) are coordinates, and $v=\partial_{\tilde{y}}$.
\end{proof}
\begin{definition}
Such coordinates \((z,w,y)\) are \emph{contact symplectic coordinates}.
\end{definition}
\begin{corollary}\label{cor:autos.id.on.S}
Take a contact symplectic lift $(M, V, v, S, \omega)$ with \(S\) connected.
Every automorphism of the Reeb manifold $(M,V,v)$ acts on $S$ as a biholomorphism.
It descends to the identity on $S$ if and only if it is the time $t$ flow of the Reeb vector field for some $t\in\C$.
\end{corollary}
\begin{proof}
The flow of $v$ is the $\C$-bundle action.
A $V$-automorphism leaves $v$ invariant precisely when acting as a $\C$-bundle automorphism, hence acting on $S$ by biholomorphism.
Acting on $S$ trivially, the $V$-automorphism acts trivially on each open set $U\subseteq S$, hence acts preserving its preimage $\pi^{-1}U\subseteq M$.
We then see directly that the $V$-automorphism has the form 
\[
(z,w,y)\mapsto (z,w,y+h(z,w)).
\]
Preserving $v=\partial_y$ forces constant $h$.
\end{proof}
\subsection{Pullback of principal bundles}
We recall the standard construction of holomorphic principal bundles.
\begin{definition}
Let
\[
\begin{tikzcd}
G\arrow[r]&M'\arrow[d,"\pi'"]\\
&S'
\end{tikzcd}
\]
be a holomorphic principal bundle.
Take a holomorphic map $S\xrightarrow{\varphi}S'$.
The \emph{pullback bundle} has total space 
\[
M:=\set[(s,p')\in S\times M']{\varphi(s)=\pi'(p')},
\]
map $(s,p')\in M\xmapsto{\pi}s\in S$, and group action \(g(s,p')=(s,gp')\).
\begin{lemma}
The pullback bundle is a holomorphic principal bundle.
\end{lemma}
\begin{proof}
The proof is the same as for \(C^{\infty}\)-bundles \cite[Appendix~A, p.~374]{LM}.
Clearly, $M\subseteq S\times M'$ is a Zariski-closed subset.
Transversality of $\varphi$ to $\pi'$ makes $M\subseteq S\times M'$ a closed embedded complex submanifold.
The group action is holomorphic, free and proper on $S\times M'$.
The complex submanifold $M$ is invariant, so this $G$-action restricts to a holomorphic, free and proper $G$-action on $M$.
The map $\pi$ is invariant under this $G$-action, so descends to a biholomorphic map $M/G\to S$, so $S\cong M/G$ is the quotient space of this action and
\[
\begin{tikzcd}
G\arrow[r]&M\arrow[d,"\pi"]\\
&S
\end{tikzcd}
\]
is a holomorphic principal $G$-bundle.
\end{proof}
Define a holomorphic map $M\xrightarrow{\Phi}M'$, the \emph{pullback bundle morphism} by 
\[
(s,p')\in M\mapsto \Phi(s,p'):=p'
\]
giving a commutative diagram
\[
\begin{tikzcd}
M\arrow[d,"\pi"']\arrow[r,"\Phi"]&M'\arrow[d,"\pi'"]\\
S\arrow[r,"\varphi"]&S',
\end{tikzcd}
\]
so \(\Phi\) is a morphism of holomorphic principal $G$-bundles.
If, in addition, $S\xrightarrow{\varphi}S'$ is an immersion (respectively, a submersion) then $M\xrightarrow{\Phi}M'$ is also an immersion (respectively, a submersion).
\end{definition}
\subsection{Lifting holomorphic discs}
Holomorphic curves in the symplectic base lift to isotropic curves upstairs:
\begin{proposition}\label{Prop:V-lifts}
Let $(M, V, v, S, \omega)$ be a contact symplectic lift with projection $\pi\colon M\to S$. 
Take a holomorphic map $\varphi\colon C \to S$  from a connected Riemann surface.
Take points $\zeta_0\in C$, $p_0\in M$ so that $\pi(p_0)=\varphi(\zeta_0)$.
Take the universal covering space $(\tilde{C},\tilde{\zeta}_0)\to (C,\zeta_0)$.
There is a unique $V$-curve $\tilde\varphi\colon \tilde{C}\to M$ such that $\tilde\varphi(\tilde{\zeta}_0)=p$ and $\pi \circ \tilde\varphi=\varphi$.
\end{proposition}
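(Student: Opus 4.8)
The plan is to recognise the contact form $\xi$ as a holomorphic connection on the $\C$-principal bundle $\pi\colon M\to S$, so that a $V$-Legendrian lift of $\varphi$ is nothing but a horizontal section, and then to use the simple connectedness of $\tilde C$ to produce such a section and show that it is unique. First I would record that $\xi$ is a connection $1$-form: on the infinitesimal generator $v$ of the $\C$-action it satisfies $\xi(v)\equiv 1$, and it is invariant, since by Cartan's formula $\LieDer_v\xi=\iota_v d\xi+d(\iota_v\xi)=\iota_v d\xi=-\iota_v\pi^*\omega=0$ (here $\iota_v\pi^*\omega=0$ because $v$ is vertical). Its horizontal distribution is precisely $\ker\xi=V$. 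Consequently, for a holomorphic map $f$ from a Riemann surface into $M$, being $V$-Legendrian is equivalent to $f^*\xi=0$, that is, to being horizontal for this connection.

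Next I would pass to the universal cover and pull back the bundle, using the construction of the preceding subsection. Let $q\colon\tilde C\to C$ be the universal covering and set $\psi:=\varphi\circ q\colon\tilde C\to S$. Pulling back $M\to S$ along $\psi$ yields a holomorphic $\C$-principal bundle $\tilde\pi\colon\tilde M\to\tilde C$ together with the pullback bundle morphism $\Phi\colon\tilde M\to M$ covering $\psi$, and the connection $\xi$ pulls back to a connection $\Phi^*\xi$ on $\tilde M$. A horizontal section $\sigma$ of $\tilde M$ then composes with $\Phi$ to a $V$-Legendrian map $\tilde\varphi:=\Phi\circ\sigma\colon\tilde C\to M$ with $\pi\circ\tilde\varphi=\psi$, and conversely every such lift arises this way; the prescribed initial condition $\tilde\varphi(\tilde\zeta_0)=p_0$ translates into $\sigma(\tilde\zeta_0)=(\tilde\zeta_0,p_0)$.

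The key step is then to solve for $\sigma$. Since $\tilde C$ is simply connected, by uniformization it is one of $\D$, $\C$, or $\mathbb{CP}^1$, and in each case $H^1(\tilde C,\mathcal O)=0$ (the first two are Stein, and $H^1(\mathbb{CP}^1,\mathcal O)=0$). As holomorphic $\C$-principal bundles over $\tilde C$ are classified by $H^1(\tilde C,\mathcal O)$, the bundle $\tilde M\to\tilde C$ is holomorphically trivial, $\tilde M\cong\tilde C\times\C$. In such a trivialization the pulled-back connection reads $\Phi^*\xi=dy-\beta$ for a holomorphic $1$-form $\beta$ on $\tilde C$, and a section $y=Y(\tilde\zeta)$ is horizontal exactly when $dY=\beta$. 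Now $\beta$ is automatically closed for dimensional reasons (there are no nonzero holomorphic $2$-forms on a Riemann surface), so simple connectedness of $\tilde C$ furnishes a global holomorphic primitive $Y$, unique up to an additive constant; fixing the constant by the value $p_0$ over $\tilde\zeta_0$ produces the desired horizontal section and hence $\tilde\varphi$. Uniqueness then follows at once: two $V$-Legendrian lifts of $\psi$ agreeing at $\tilde\zeta_0$ have the same projection, so they differ by a holomorphic map $\tilde C\to\C$ into the fibre, which in the trivialization is $Y_2-Y_1$; horizontality of both forces $d(Y_2-Y_1)=0$, whence $Y_2-Y_1$ is a constant that must vanish by the initial condition.

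The hard part is conceptual rather than computational: the genuine obstruction to lifting $\varphi$ already over $C$ is the holonomy (period) of the connection around loops in $C$, and passing to the simply connected $\tilde C$ is exactly what removes it, gluing the local primitives of $\beta$ into a single global one. I expect the one point needing care is the global triviality of the pulled-back bundle together with the clean reduction of ``horizontal section'' to ``primitive of a holomorphic $1$-form,'' handled uniformly so as to include the compact case $\tilde C=\mathbb{CP}^1$ alongside $\C$ and $\D$.
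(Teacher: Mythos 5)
Your proof is correct, and it follows the same overall strategy as the paper's: read $\xi$ as a holomorphic connection on the principal $\C$-bundle, so that $V$-Legendrian lifts are precisely horizontal sections, pull the bundle back over the curve, and pass to the universal cover to remove the monodromy obstruction. The difference lies in how the bundle gets trivialized over $\tilde{C}$. The paper works with the pullback over $C$ itself, notes that the pulled-back connection is flat (its curvature is $-\pi_C^*\varphi^*\omega$, which vanishes because $\varphi^*\omega$ is a holomorphic $2$-form on a Riemann surface), and then quotes Atiyah's theorem on flat bundles: the local standard trivializations glue along paths, so over the universal cover the bundle becomes $\tilde{C}\times\C$ with connection form $dy$, whose level sets are the lifts. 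You instead trivialize the bundle over $\tilde{C}$ cohomologically --- uniformization gives $\tilde{C}\in\{\D,\C,\mathbb{CP}^1\}$, in each case $H^1(\tilde{C},\mathcal{O})=0$, and this group classifies holomorphic principal $(\C,+)$-bundles --- and only afterwards use one-dimensionality to see that in the trivialization $\Phi^*\xi=dy-\beta$ the form $\beta$ is automatically closed, hence admits a holomorphic primitive on the simply connected $\tilde{C}$. Your route is more self-contained: it leans only on facts the paper already uses elsewhere (classification of $\C$-bundles by $H^1(\cdot,\mathcal{O})$, vanishing for Stein or genus-zero surfaces), and it yields uniqueness in one line from $d(Y_2-Y_1)=0$, a point the paper's proof leaves implicit. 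The paper's route via parallel transport is the one that generalizes beyond curves, to flat connections over any simply connected base; but over a Riemann surface both arguments exploit the same underlying fact, which you state explicitly at the end: $\varphi^*\omega=0$ on a curve, so the only obstruction to lifting is monodromy, and passing to $\tilde{C}$ kills it.
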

\begin{proof}
%

Choose a Stein contractible neighborhood of $\bar p_0:=\pi(p_0)$ in $S$.
Pick contact symplectic coordinates as in Lemma~\ref{Lem:contact-sympl-coordinates}.
The map $\varphi$ locally consists of holomorphic functions $z_j=z_j(\zeta)$, $w_j=w_j(\zeta)$, $j=1,\ldots, N$.
Construct the pullback bundle
\[
\varphi^*M:=\set[(\zeta,p)]{\zeta\in C, p\in M, \varphi(\zeta)=\pi(p)}.
\]
The pullback bundle is a holomorphic principal $\C$-bundle.
The global contact form $\xi$ defined by $\xi(v)\equiv 1$ and $\xi(V)=0$ satisfies $\mathcal{L}_v\xi=0$ and $\ip_v\xi=1$.
It is a holomorphic  connection on $\varphi^*M$.
Its curvature is a holomorphic $2$-form on a Riemann surface $C$, so vanishes: $\xi$ is a flat holomorphic connection.
On some covering space $(\tilde{C},\tilde{\zeta}_0)\xrightarrow{\pi}(C,\zeta_0)\), the $\C$-bundle is holomorphically trivial, with $\xi$ identified with the standard flat holomorphic connection $dy$ \cite[Theorem~8,  p.~201]{Atiyah57}:
\[
\begin{tikzcd}
\tilde{C}\times\C\arrow[r]\arrow[d]&\varphi^*M\arrow[d]\\
\tilde{C}\arrow[r]&C.
\end{tikzcd}
\]
We can assume this covering space is the universal covering space.
The level sets of $y$ are curves in $\tilde{C}\times\C$, lifting $\tilde{C}$ into $\tilde{C}\times\C$, satisfying $\xi=0$, and foliating $\tilde{C}\times\C$.
The equation $\tilde\varphi(\tilde\zeta_0)=p_0$ singles out one leaf of that foliation.
\end{proof}
For example, there are entire curves in the base symplectic manifold just when there are entire $V$-curves upstairs.
\section{Exactness}
In this section, we study exactness properties of the holomorphic symplectic form in the base of a contact symplectic lift. Every contact symplectic lift has a $C^\infty$-exact symplectic form.
Holomorphic exactness is equivalent to vanishing of a cohomological obstruction.
\subsection{%
\texorpdfstring%
{\u{C}ech--de~Rham description of exactness}%
{Cech-de Rham description of exactness}%
}\label{section:De-Rham-coh}
We explain some elementary \u{C}ech--de~Rham theory.
We use it repeatedly below.
Take a $C^{\infty}$-exact holomorphic $2$-form $\mu$ on a complex manifold $S$.
We define a cohomology class $\cls\mu$ so that $\mu$ is holomorphically exact precisely when $\cls\mu=0$.

Choose a good open cover $\{S_\alpha\}$.
So every finite intersection $S_{\alpha\beta\dots}=S_\alpha\cap S_\beta\cap \dots$ is contractible \cite[Theorem~5.1]{Bo-Tu}.
We can also assume that every finite intersection is Stein.
By the de~Rham and Leray theorems \cite[p.~209 Theorem~5.17]{Dem12},
\[
H^2_{dR}(S)\cong H^2(S,\C)\cong \check H^2(\{S_\alpha\}, \C).
\]
By the holomorphic Poincar\'e lemma \cite[p.~448 and p.~451]{GH78}, there is a
holomorphic $1$-form $\nu_\alpha$ on each $S_\alpha$ with $d\nu_\alpha=\mu$. 
On each overlap $S_{\alpha\beta}$, $\nu_\beta-\nu_\alpha$ is closed.
Thus it is holomorphically exact, again by the holomorphic Poincar\'e lemma.
So there are holomorphic functions \(f_{\alpha\beta}\) on each \(S_{\alpha\beta}\) so that
\[
df_{\alpha\beta}=\nu_\beta-\nu_\alpha.
\]
On each triple overlap \(S_{\alpha\beta\gamma}\), the sum
\[
c_{\alpha\beta\gamma}:=f_{\alpha\beta}+f_{\beta\gamma}+f_{\gamma\alpha}
\]
is constant.
The class $[\{c_{\alpha\beta\gamma}\}]\in H^2(S, \C)$ represents $[\mu]$ in $H^2(S, \C)$ \cite[Proposition~8.8]{Bo-Tu}.

Consequently, $\mu$ is $C^{\infty}$-exact just when this class vanishes, \ie just when there are constants $c_{\alpha\beta}$ so that
\[
c_{\alpha\beta\gamma}:=c_{\alpha\beta}+c_{\beta\gamma}+c_{\gamma\alpha}.
\]
Replacing $f_{\alpha\beta}$ by $f_{\alpha\beta}-c_{\alpha\beta}$,
\[
0=f_{\alpha\beta}+f_{\beta\gamma}+f_{\gamma\alpha}.
\]
So $\set{f_{\alpha\beta}}$ defines a holomorphic $1$-cocycle, defining a class
\[
[\set{f_{\alpha\beta}}]\in H^1(S,\mathcal{O}).
\]
Denote the quotient of this class by \(\iota_*H^1(S,\C)\) as
\[
\cls\mu\in H^1(S,\mathcal{O})/\iota_* H^1(S,\C).
\]

Conversely, suppose that we alter the choice of \(f_{\alpha\beta}\), say to \(f'_{\alpha\beta}\), and \(\nu_\alpha\) to some \(\nu'_\alpha\).
So 
\[
df'_{\alpha\beta}=\nu'_{\beta}-\nu'_{\alpha},
\]
and \(d\nu'_\alpha=\mu\).
So \(\nu'_\alpha-\nu_\alpha\) is closed, so exact, say 
\[
\nu'_\alpha-\nu_\alpha=df_\alpha,
\]
for some holomorphic functions \(f_\alpha\).
The functions
\[
c_{\alpha\beta}:=f'_{\alpha\beta}-f_{\alpha\beta}-f_\beta+f_\alpha.
\]
are constants and satisfy the cocycle condition.
Hence the difference class
\[
[\{c_{\alpha\beta}\}]=[\{f'_{\alpha\beta}-f_{\alpha\beta}\}]
\]
is a class \(\) of constants.
Denote the inclusion of the complex numbers into the sheaf of holomorphic functions on a complex manifold by \(\C\xrightarrow\iota\mathcal{O}\):
\[
\iota_*[\{c_{\alpha\beta}\}]
\in
\iota_*H^1(S,\mathcal{O})
\subseteq
H^1(S,\mathcal{O}).
\] 
So the difference class is in the image of 
\[
H^1(S,\C)\xrightarrow{\iota_*}H^1(S,\mathcal{O}).
\]
In particular, \(\cls\mu\in H^1(S,\mathcal{O})/\iota_*H^1(S,\C)\) is well defined.

Conversely, fix potentials \(\nu_\alpha\) and functions \(f_{\alpha\beta}\).
Pick any constants \(c_{\alpha\beta}\) satisfying the cocycle condition 
\[
0=c_{\alpha\beta}+c_{\beta\gamma}+c_{\gamma\alpha},
\]
and any holomorphic function \(f_\alpha\) on each \(S_\alpha\).
Let
\[
\nu'_\alpha:=\nu_\alpha+df_\alpha,
\]
and
\[
f'_{\alpha\beta}:=c_{\alpha\beta}+f_{\alpha\beta}+f_\beta-f_\alpha.
\]
The class \([\{f_{\alpha\beta}\}]\in H^1(S,\mathcal{O})\) changes to
\[
[\{f'_{\alpha\beta}\}]=[\{c_{\alpha\beta}+f_{\alpha\beta}\}]\in H^1(S,\mathcal{O}),
\]
by an arbitrary element \([\{c_{\alpha\beta}\}]\in H^1(S,\C)\).
So the class \([\{f_{\alpha\beta}\}]\in H^1(S,\mathcal{O})\) is well defined up to adding any \([\{c_{\alpha\beta}\}]\in H^1(S,\C)\).
So the possible choices of \([\{f_{\alpha\beta}\}]\) are precisely the preimage of \(\cls\mu\in H^1(S,\mathcal{O})/\iota_*H^1(S,\C)\).

Fix the data \(\{\nu_\alpha\}\): we don't just have a closed holomorphic \(2\)-form \(\mu\) but also a choice of holomorphic \(1\)-form \(\nu_\alpha\) on each \(S_\alpha\) with \(d\nu_\alpha=\mu\).
By adjusting the functions \(f_\alpha\), we may assume that \(\nu'_\alpha=\nu_\alpha\).
With this particular normalisation, we can choose \(c_{\alpha\beta}\) above arbitrarily, subject only to the cocycle condition.
We construct the new data \(\set{\nu'_\alpha},\set{f'_{\alpha\beta}}\) by:
\[
\nu'_\alpha=\nu_\alpha, 	\quad f'_{\alpha\beta}=c_{\alpha\beta}+f_{\alpha\beta}.
\] 
We are now only free to add differences \(c_\beta-c_\alpha\) of constants to \(c_{\alpha\beta}\).
In other words, the difference \([\{c_{\alpha\beta}\}]\in H^1(S,\C)\) becomes well defined.
If the local potentials \(\{\nu_\alpha\}\) are not fixed, only \(\iota_*[\{c_{\alpha\beta}\}]\) is well defined.

So every closed holomorphic \(2\)-form \(\mu\) is represented by local data \((\{\nu_\alpha\}\), \(\set{f_{\alpha\beta}})\) in the sense that
\[
\mu=d\nu_\alpha, df_{\alpha\beta}=\nu_\beta-\nu_\alpha.
\]
Fixing the potentials \(\set{\nu_\alpha}\), the data \(\set{f_{\alpha\beta}}\) is uniquely determined, up to adding any element of \(H^1(S,\C)\).
Conversely, any element of \(H^1(S,\C)\) arises uniquely in this way.

The $2$-form $\mu$ is holomorphically exact just when \(\mu=d\nu\) for some global holomorphic $1$-form \(\nu\) on $M$.
But then \(\nu-\nu_\alpha\) is exact on \(S_\alpha\), say \(\nu-\nu_\alpha=df_\alpha\).
Conversely, if we can alter every $\nu_\alpha$, by adding some closed, hence holomorphically exact, holomorphic form $df_\alpha$ so that $\nu_\alpha+df_\alpha=\nu_\beta+df_\beta$, then $\nu:=\nu_\alpha+df_\alpha$ on $S_\alpha$ is a global holomorphic \(1\)-form with $d\nu=\mu$. 

These $df_\alpha$ then satisfy 
\[
\nu_\beta-\nu_\alpha=df_\alpha-df_\beta=d(f_\alpha-f_\beta),
\]
that is
\[
c_{\alpha\beta}:=f_{\alpha\beta}+f_\beta-f_\alpha
\]
are constants.
So $\mu$ is holomorphically exact precisely when $f_{\alpha\beta}$ is the image of some $c_{\alpha\beta}$ in cohomology, \ie just when \(\cls\mu=0\).

\subsection{Symplectic exactness}
\begin{definition}
Let $S$ be a complex manifold  of complex dimension $2N$  for some $N\geq 1$.
A \emph{holomorphic symplectic form} on $S$ is a closed holomorphic $2$-form $\omega$ on $S$ so that $\omega^N$ is nowhere zero.
A holomorphic symplectic form $\omega$ on \(S$ is \emph{holomorphically ($C^{\infty}$)  exact} if $\omega=d\vartheta$ for some holomorphic ($C^{\infty}$) complex valued $1$-form $\vartheta$.
We then say that $(S,\omega)$ is a \emph{holomorphically ($C^{\infty}$) exact symplectic manifold}.
A contact symplectic lift is \emph{holomorphically ($C^{\infty}$) exact} when its underlying holomorphic symplectic manifold is.
\end{definition}
\begin{remark}
We find numerous equivalent conditions for holomorphic exactness in section~\ref{subsection:monodromy}; the key obstruction is a monodromy class in $H^1(S,\C)$.
\end{remark}
\begin{definition}\label{def:adapted}
An \emph{adapted cover} of a contact symplectic lift $(M, V, v, S, \omega)$ is a good Leray open cover \(\{S_\alpha\}\) by contractible open sets together with contact symplectic coordinates \((z_\alpha,w_\alpha,y_\alpha)\) on each \(M_\alpha:=\pi^{-1}S_\alpha\).
We always let
\begin{align*}
S_{\alpha\beta}&:=S_\alpha\cap S_\beta,\\
S_{\alpha\beta\gamma}&:=S_\alpha\cap S_\beta\cap S_\gamma.
\end{align*}
We let
\[
\nu_\alpha:=-\sum_{j=1}^N w_{\alpha j}dz_{\alpha j}.
\]
Then
\begin{equation}\label{Eq:form-Darboux}
\left.\omega\right|_{S_\alpha}=\sum_{j=1}^N d z_{\alpha j}\wedge d w_{\alpha j}=d\nu_\alpha.
\end{equation}
Note that on each \(S_{\alpha\beta}\), \(\nu_\beta-\nu_\alpha\) is exact, say \(\nu_\beta-\nu_\alpha=df_{\alpha\beta}\).
Moreover, on each \(S_{\alpha\beta\gamma}\),
\[
c_{\alpha\beta\gamma}:=f_{\alpha\beta}+f_{\beta\gamma}+f_{\gamma\alpha}
\]
is constant.
\end{definition}
\begin{theorem}\label{thm:C.infty.exact}
Every contact symplectic lift is \(C^\infty\)-exact.
\end{theorem}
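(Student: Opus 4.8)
The plan is to reduce the statement to the cohomological criterion established in Remark~\ref{Rem:De-Rham-coh}: a closed holomorphic $2$-form $\omega$ on $S$ is $C^\infty$-exact precisely when its de~Rham class $[\omega]\in H^2(S,\C)$ vanishes. So the entire task becomes showing that $[\omega]=0$.

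The starting observation is that, by definition of a contact symplectic lift, $\pi^*\omega=-d\xi$ on $M$, where $\xi$ is the global holomorphic contact form associated to $v$. Since $\xi$ is a globally defined smooth $1$-form on $M$, the form $\pi^*\omega$ is exact on $M$; in terms of de~Rham cohomology this says exactly that $\pi^*[\omega]=0\in H^2(M,\C)$. Thus the pullback of the class we care about already vanishes upstairs, and what remains is to descend this information to $S$.

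The key step is therefore to argue that the pullback map $\pi^*\colon H^2(S,\C)\to H^2(M,\C)$ is injective---in fact an isomorphism. This is where the bundle structure enters: the fiber of $\pi\colon M\to S$ is $\C$, which is contractible, so $\pi$ is a homotopy equivalence (as already noted in the remark following Definition~\ref{Def:contact-sympl-lift}), and hence induces isomorphisms on cohomology with any coefficients. Equivalently, one may invoke the Leray spectral sequence of $\pi$: the higher direct images $R^q\pi_*\C$ vanish for $q>0$ because the fibers are contractible, while $R^0\pi_*\C=\C$, which collapses the sequence to the isomorphism $H^\bullet(S,\C)\cong H^\bullet(M,\C)$. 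Injectivity of $\pi^*$ on $H^2$ then forces $[\omega]=0\in H^2(S,\C)$, and the criterion of Remark~\ref{Rem:De-Rham-coh} concludes the proof.

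I expect no serious obstacle: the only point needing care is the justification that $\pi^*$ is injective on $H^2(S,\C)$, which rests entirely on the contractibility of the $\C$-fibers; once that is in place the argument is immediate. The one subtlety worth a sentence is that $H^2(S,\C)$ here is to be read as the de~Rham (equivalently \v Cech, equivalently singular) cohomology used in Remark~\ref{Rem:De-Rham-coh}, and that $\pi$ being a holomorphic submersion makes the pullback compatible with all of these descriptions, so that the identification $\pi^*[\omega]=0$ genuinely takes place in the same group on which $\pi^*$ is an isomorphism.
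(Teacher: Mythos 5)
Your proof is correct, but it runs along a different track than either of the two proofs the paper gives. The paper's first proof produces an explicit primitive: since the maximal compact subgroup of $(\C,+)$ is trivial, the principal bundle $\pi\colon M\to S$ admits a global $C^\infty$ section $f$, and then $\omega=f^*\pi^*\omega=f^*(-d\xi)=d\bigl(f^*(-\xi)\bigr)$. The paper's second proof is \v Cech-theoretic: it covers $S$ by Darboux/fit charts, identifies the image of $[\omega]$ in $H^2(\{S_\alpha\},\C)$ with the cocycle $f_{\alpha\beta}+f_{\beta\gamma}+f_{\gamma\alpha}$, and observes that this vanishes identically by the cocycle condition of the principal bundle. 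You instead argue purely at the level of cohomology classes: $\pi^*[\omega]=[-d\xi]=0$ in $H^2(M,\C)$, and $\pi^*$ is injective on $H^2(\cdot,\C)$ because $\pi$ has contractible fibers and is therefore a homotopy equivalence (a fact the paper itself records in the remark after Definition~\ref{Def:contact-sympl-lift}). The underlying topological input is the same in your argument and in the paper's first proof --- contractibility of $(\C,+)$ --- and indeed a global section is exactly what furnishes a left inverse $f^*$ to $\pi^*$; but your version never needs the section, at the price of being non-constructive: it certifies $[\omega]=0$ without exhibiting a primitive. What the paper's second, longer proof buys, and yours does not, is the explicit cocycle machinery ($f_{\alpha\beta}=y^\alpha-y^\beta$, equation~\eqref{Eq:M-to-cocycle}) that is reused verbatim in the converse direction (Theorem~\ref{thm:C.infty.exact.iff.csl}) and in the parameterization of inequivalent lifts by $H^1(S,\C)$ (Theorem~\ref{thm:how-many-lifts}). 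Your one point of care --- that the de~Rham, \v Cech, and singular descriptions of $H^2(S,\C)$ agree compatibly with $\pi^*$ --- is the right subtlety to flag, and naturality of the de~Rham isomorphism settles it.
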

\begin{proof}
We present two proofs.

\emph{First proof: differential topology.}
As $\C$ is contractible, every smooth principal $\C$-bundle admits a global smooth section $Y$ \cite[Vol.~1, Chapter~1, \S5, p.~58, Theorem~5.7]{KNI}.
Denoting the projection map by \(\pi\), \(\pi\circ Y\) is the identity on \(S\), so \(Y^*\pi^*\) is the identity on \(\Omega^*_S\), so
\[
\omega=Y^*\pi^*\omega=Y^*d\xi=d(Y^*\xi)
\]
is $C^\infty$-exact.
%

\emph{Second proof: \u{C}ech cohomology.}
We reuse this argument below.
Choose an adapted cover as in Definition~\vref{def:adapted}.
Let
\begin{equation}\label{Eq:local-contact}
\xi_\alpha:=\left.\xi\right|_{M_\alpha}=dy_\alpha+\nu_\alpha.
\end{equation}
The transition functions of the $\C$-bundle $M\to S$ are holomorphic functions $f_{\alpha\beta}$ on $S_{\alpha\beta}$ defined by
\begin{equation}\label{Eq:change-C-bundle-f}
f_{\alpha\beta}:=y_\alpha-y_\beta.
\end{equation}
Applying \ref{Eq:local-contact}, 
\begin{equation}\label{Eq:M-to-cocycle}
df_{\alpha\beta}=dy_\alpha-dy_\beta=(\xi_\alpha-\nu_\alpha)-(\xi_\beta-\nu_\beta)=\nu_\beta-\nu_\alpha.
\end{equation}
In particular, \(f_{\alpha\beta}\) is constant on the fibres of \(M\to S\), so is a function on \(S_{\alpha\beta}\) with
\begin{equation}\label{Eq:cocy-princ-bd-zero}
f_{\alpha\beta}+f_{\beta\gamma}+f_{\gamma\alpha}=0 
\end{equation}
on \(S_{\alpha\beta\gamma}\).
As in section~\vref{section:De-Rham-coh},
the class of 
\((f_{\alpha\beta}+f_{\beta\gamma}+f_{\gamma\alpha})|_{S_{\alpha\beta\gamma}}\)
is the image of \(\omega\) in $H^2(S,\C)$. By \eqref{Eq:cocy-princ-bd-zero}, this class vanishes, so $\omega$ is $C^\infty$-exact. 
The cohomology vanishing consists precisely in the existence of \(C^\infty\) functions \(Y_{\alpha}\) on each overlap \(S_{\alpha\beta}\) so that \(f_{\alpha\beta}=Y_{\beta}-Y_{\alpha}\).
We then have a section \(Y\) defined by \(y_{\alpha}=Y_{\alpha}\).
\end{proof}
Theorem~\ref{thm:C.infty.exact} shows that all contact symplectic lifts are \(C^{\infty}\)-exact.
By contrast, holomorphic exactness is a finer condition, controlled by the cohomology class \(\cls\omega\).
\begin{corollary}
Suppose that \((S,\omega)\) is a holomorphic symplectic manifold.
Suppose that it admits a contact symplectic lift.
Suppose that \(X\) is a compact reduced complex space of even complex dimension \(2k\).
Every holomorphic map \(X\xrightarrow{\varphi}S\) satisfies \(\varphi^*\omega^k=0\).
Moreover, $S$ is not isomorphic to the smooth locus of a compact reduced complex space.
\end{corollary}
\begin{proof}
By Theorem~\vref{thm:C.infty.exact}, we can suppose that \(\omega=d\nu\) for a \(C^\infty\) \(1\)-form \(\nu\) on $S$.
Let
\[
C:=\frac{(-1)^k}{4^k (k!)^2},\qquad\Omega:=C\omega^k\wedge\bar\omega^k.
\]
The form $\Omega$ is real and semipositive: take a smooth point \(x_0\in X\) near which \(X\xrightarrow{\varphi}S\) is an immersion, so
\[
\varphi^*\omega^k=f(z)dz_1\wedge\dots\wedge dz_{2k},
\]
in local holomorphic coordinates on $X$:
\[
\varphi^*\Omega=|f(z)|^2 dx_1\wedge dy_1\wedge\dots\wedge dx_k\wedge dy_k.
\]
The singularities of \(X\) are of real codimension two or more.
Let \(X_s\subseteq X\) be the set of smooth points.
So \(\int_{X_s}\Omega=\int_X\Omega\) \cite[p.~33]{GH78}.
So \(\int_X\varphi^*\Omega>0\) or else \(\varphi^*\omega^k=0\) at every smooth immersed point of \(X\), so at the generic point, so \(\varphi^*\omega^k=0\) on \(X\).

But \(\Omega\) is exact
\[
\Omega=Cd(\nu\wedge \omega^{k-1}\wedge\bar\omega^k).
\]
By Stokes's theorem for analytic varieties \cite[p.~33]{GH78}, \(\int_X\Omega=0\).
Hence \(\varphi^*\omega^k=0\) on \(X\).

Finally, $\omega$ is a holomorphic symplectic form on $S$, so $\int_S\Omega>0$ so $S$ is not the smooth locus of any compact reduced complex space.
\end{proof}
\begin{theorem}[Existence of lift]\label{thm:C.infty.exact.iff.csl}
A holomorphic symplectic manifold is \(C^\infty\)-exact if and only if it belongs to a contact symplectic lift.
\end{theorem}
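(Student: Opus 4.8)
The plan is to prove the two implications separately, noting that one of them is already in hand. If a holomorphic symplectic manifold $(S,\omega)$ belongs to a contact symplectic lift $(M,V,v,S,\omega)$, then Theorem~\ref{thm:C.infty.exact} asserts precisely that $\omega$ is $C^\infty$-exact, so the forward implication is immediate. All the work lies in the converse: starting from a $C^\infty$-exact holomorphic symplectic manifold $(S,\omega)$, I must manufacture a proper Reeb manifold $(M,V,v)$ together with a contact symplectic projection $\pi\colon M\to S$ satisfying conditions (1) and (2) of Definition~\ref{Def:contact-sympl-lift}. The strategy is to build $M$ as a $\C$-principal bundle over $S$ carrying a holomorphic connection whose curvature is $-\omega$, and to realise the contact form $\xi$ as that connection form.

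Concretely, I would reuse the \v{C}ech--de~Rham bookkeeping already set up in Remark~\ref{Rem:De-Rham-coh}. Fix a good Leray cover $\{S_\alpha\}$ of $S$ by contractible Stein opens; on each $S_\alpha$ the holomorphic Poincar\'e (equivalently Darboux) lemma gives a holomorphic potential $\nu_\alpha$ with $d\nu_\alpha=\omega$, and on overlaps $\nu_\alpha-\nu_\beta=df_{\alpha\beta}$ for holomorphic $f_{\alpha\beta}$. The hypothesis that $\omega$ is $C^\infty$-exact says exactly that the class $[\{c_{\alpha\beta\gamma}\}]\in H^2(\{S_\alpha\},\C)$, with $c_{\alpha\beta\gamma}=f_{\alpha\beta}+f_{\beta\gamma}+f_{\gamma\alpha}$, vanishes. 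Hence there are constants $c_{\alpha\beta}$ with $c_{\alpha\beta\gamma}=c_{\alpha\beta}+c_{\beta\gamma}+c_{\gamma\alpha}$, and after replacing $f_{\alpha\beta}$ by $f_{\alpha\beta}-c_{\alpha\beta}$ --- still holomorphic and still satisfying $df_{\alpha\beta}=\nu_\alpha-\nu_\beta$ --- the $\{f_{\alpha\beta}\}$ obey the additive cocycle identity $f_{\alpha\beta}+f_{\beta\gamma}+f_{\gamma\alpha}=0$. These are therefore transition functions for a holomorphic $\C$-principal bundle $\pi\colon M\to S$, obtained by gluing the charts $M_\alpha=S_\alpha\times\C$ (with fibre coordinate $y^\alpha$) via $y^\alpha=y^\beta+f_{\alpha\beta}$.

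It then remains to assemble the contact data. On $M_\alpha$ set $\xi_\alpha=dy^\alpha-\nu_\alpha$; on overlaps $\xi_\alpha-\xi_\beta=df_{\alpha\beta}-(\nu_\alpha-\nu_\beta)=0$, so the $\xi_\alpha$ glue to a global holomorphic $1$-form $\xi$ with $d\xi=-\pi^*\omega$. Nondegeneracy is checked chartwise: since $\nu_\alpha\wedge\omega^N$ vanishes for dimension reasons on $S_\alpha$, one gets $\xi_\alpha\wedge(d\xi_\alpha)^N=(-1)^N\,dy^\alpha\wedge\omega^N\neq 0$, exactly as in the computation in Lemma~\ref{lemma:Reeb.to.csl}, so $\xi$ is a contact form and $V:=\ker\xi$ a contact structure. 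Because the transitions shift $y$ only by functions of the base, the fields $\partial_{y^\alpha}$ agree on overlaps and define a global holomorphic vector field $v$; one checks $\xi(v)\equiv 1$ and $\iota_v d\xi=0$, so $v$ is the Reeb field of $\xi$ and its flow is precisely the $\C$-action. As the $\C$-action of a principal bundle it is free and proper, so $(M,V,v)$ is a proper Reeb manifold and $(M,V,v,S,\omega)$ is the desired contact symplectic lift. (The local normal form is just the $\nu_\alpha$-fitness of Lemma~\ref{Lem:contact-sympl-coordinates}.)

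The conceptual crux --- and the only genuinely non-formal point --- is the middle step: recognising that $C^\infty$-exactness is precisely the vanishing of the obstruction to turning the \emph{connection cochain} $\{f_{\alpha\beta}\}$, which always exists, into an honest additive $\C$-cocycle. This is exactly the subtlety flagged in the introduction: had we instead demanded that $\omega$ be holomorphically exact, we would be requiring $\{f_{\alpha\beta}\}$ to lie in the image of $H^1(S,\C)\to H^1(S,\mathcal{O})$, which is strictly stronger; $C^\infty$-exactness is the sharp condition that merely allows us to correct the triple overlaps by locally constant $c_{\alpha\beta}$. Once this cohomological translation is made, the remaining verifications (gluing of $\xi$, the Reeb equations, nondegeneracy, and properness of the resulting principal action) are routine.
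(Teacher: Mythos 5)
Your proposal is correct and follows essentially the same route as the paper's proof: the forward implication by citing Theorem~\ref{thm:C.infty.exact}, and for the converse the identical \v{C}ech construction --- local holomorphic potentials $\nu_\alpha$ on a good Stein cover, primitives $f_{\alpha\beta}$ of $\nu_\alpha-\nu_\beta$, correction by constants $c_{\alpha\beta}$ (which is exactly where $C^\infty$-exactness enters), gluing of the charts $S_\alpha\times\C$ into a $\C$-principal bundle, and patching the local forms $\xi_\alpha=dy^\alpha-\nu_\alpha$ into a global contact form with Reeb field $\partial_{y^\alpha}$. Your explicit verification of nondegeneracy and of the Reeb equations only spells out what the paper leaves implicit in its local normal form, so there is nothing to flag.
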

Foreman \cite[Theorem~4.3]{For00} has a similar result and proof, assuming compactness of base and total space.
For real contact structures, Boothby and Wang \cite{BW} have a similar result and proof. 
\begin{proof}
By Theorem~\vref{thm:C.infty.exact}, if a holomorphic symplectic manifold \((S,\omega)\) has a contact symplectic lift then it is $C^{\infty}$-exact.

Suppose that \((S,\omega)\) is \(C^\infty\)-exact.
Select an adapted cover as in Definition~\vref{def:adapted}.
On each $S_{\alpha\beta}$, the $1$-form $\nu_\beta-\nu_\alpha$ is closed, hence holomorphically exact, so admits a holomorphic primitive 
\begin{equation}\label{Eq:co-omega-cic}
df_{\alpha\beta}=\nu_\beta-\nu_\alpha.
\end{equation} 
As in section~\vref{section:De-Rham-coh},
$c_{\alpha\beta\gamma}:=f_{\alpha\beta}+f_{\beta\gamma}+f_{\gamma\alpha}$ is constant on \(S_{\alpha\beta\gamma}\), representing $[\omega]=0\in H^2_{dR}(S)$. 
So there are constants $c_{\alpha\beta}$ so that $c_{\alpha\beta\gamma}=c_{\alpha\beta}+c_{\beta\gamma}+c_{\gamma\alpha}$. Replacing $f_{\alpha\beta}$ with $f_{\alpha\beta}-c_{\alpha\beta}$, $[\{f_{\alpha\beta}\}]\in H^1(S,\mathcal O)$ are the transition maps of a $\C$-principal bundle 
\[
\begin{tikzcd}
\C\arrow{r}&M\arrow{d}\\
&S,
\end{tikzcd}
\]
by \eqref{Eq:change-C-bundle-f}.
Define the contact form $\xi_\alpha=dy_\alpha+\nu_\alpha$ on $S_\alpha\times \C$ as in \eqref{Eq:local-contact}. 
On $S_{\alpha\beta}$, by \eqref{Eq:co-omega-cic},
\[
\xi_\beta-\xi_\alpha=dy_\beta+\nu_{\beta}-(dy_\alpha+\nu_\alpha)
=df_{\alpha\beta}-\nu_\beta+\nu_\alpha=0.
\]
Therefore, the $\xi_\alpha$ glue to a global contact form $\xi$ on $M$. Its Reeb vector field $v$ on $S_\alpha\times\C$ is $\partial_{y_\alpha}$, hence $\C$-complete and with flow acting freely on $M$. Therefore, $(M, V, v, S, \omega)$ is a contact symplectic lift. 
\end{proof}

\subsection{Holomorphic flat connections}
We review standard material on holomorphic connections \cite{Atiyah57,Huy05} in a form useful for contact symplectic lifts.

Take a holomorphic left principal bundle 
\[
\begin{tikzcd}
G\arrow{r}&P\arrow[d,"\pi"]\\
&X,
\end{tikzcd}
\]
on a complex manifold \(X\), with action denoted $(g,p)\mapsto gp$.
Every element \(A\) in the Lie algebra \(\LieG\) of $G$ has fundamental vector field
\[
\mathscr{X}_A(p):=\left.\frac{d}{dt}\right|_{t=0}e^{tA}p.
\]
These vector fields constitute a Lie algebra action.
Let $\LieG_P:=P\times\LieG$ be the trivial vector bundle over $P$ with fibre $\LieG$.
Map
\[
(p,A)\in \LieG_P\xmapsto{\mathscr{X}}\mathscr{X}_A(p)\in T_p P.
\]
This map fits into the exact sequence
\[
\begin{tikzcd}
0\arrow{r}&\LieG_P\arrow[r,"\mathscr{X}"]&TP\arrow[r,"d\pi"]&\pi^*TM\to 0,
\end{tikzcd}
\]
of holomorphic vector bundles on \(P\).
A \emph{holomorphic connection} $V$ on this bundle is a \(G\)-invariant holomorphic splitting $TP=\LieG_P\oplus V$, \ie a \(G\)-invariant holomorphic distribution complementary to the vertical vector fields.

A \emph{holomorphic connection form} is a \(\LieG\)-valued holomorphic \(1\)-form \(\xi\) on \(P\) so that \(\xi(\mathscr{X}_A(p))=A\) for all \(A\in\LieG\) and \(g^*\xi=\Ad_g\xi\) for any \(g\in G\).
Then $V=\ker \xi$ is a connection.
Conversely every connection \(V\) has a unique connection form \(\xi\).
The curvature of the connection is the \(\LieG\)-valued \(2\)-form \(\Omega\) so that, for any tangent vectors \(u,w\in T_p P\),
\[
\Omega(u,w):=d\xi(u,w)+[\xi(u),\xi(w)].
\]

For $G=\C$ (as opposed to $\C^*$, the structure group of line bundles), we denote the bundle by
\[
\begin{tikzcd}
\C\arrow{r}&M\arrow{d}\\
&S.
\end{tikzcd}
\]
Let \(v:=\mathscr{X}_1\).
A complex manifold $M$ with a $\C$-complete holomorphic vector field whose flow acts freely and properly is precisely a principal $\C$-bundle over $S=M/\C$.
As $\C$ is abelian, the curvature is $d\xi$.
The curvature is the pullback of a unique holomorphic $2$-form $\omega$ on $S$ by Lemma~\vref{lemma:semibasic}.
A holomorphic connection $V$ on $M$ is precisely a $v$-invariant holomorphic hyperplane field $V$ transverse to $v$.
The connection $1$-form $\xi$ is the unique $1$-form with $\xi(v)=1$ and $\left.\xi\right|_V=0$, automatically $v$-invariant.

Summing up:
\begin{lemma}\label{lemma:quotient.by.vf}
Suppose that \(M\) is a complex manifold.
Suppose that \(v\) is a \(\C\)-complete holomorphic vector field on \(M\), whose flow acts freely and properly.
The quotient \(S\) by that action makes 
\[
\begin{tikzcd}
\C\arrow{r}&M\arrow[d,"\pi"]\\
&S
\end{tikzcd}
\]
into a holomorphic principal \(\C\)-bundle.
Every choice of holomorphic hyperplane field \(V\subseteq TM\) with \(\LieDer_v V\subseteq V\) and \(v(p)\not\in V_p\) for every $p\in M$ is a holomorphic connection for that bundle.
Its connection form is the unique \(1\)-form \(\xi\) with \(\xi(v)=1\) and \(\xi=0\) on \(V\).
It is \(v\)-invariant and holomorphic. 
Conversely, every \(1\)-form \(\xi\) with \(\LieDer_v\xi=0\) and \(\xi(v)=1\) is the connection form for a unique holomorphic connection \(V=\ker\xi\).
The curvature of that connection is \(d\xi\).
The curvature is the pullback of a unique holomorphic $2$-form $\omega$ on $S$.
\end{lemma}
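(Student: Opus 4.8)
The plan is to read off every assertion from the general theory of holomorphic connections on a principal $G$-bundle developed in the preceding paragraphs, specialised to the abelian group $G=\C$. First I would establish the bundle structure: since $v$ is $\C$-complete and flows freely and properly, its flow is a free and proper holomorphic $\C$-action, so the quotient $S=M/\C$ is a manifold and $M\to S$ a smooth principal $\C$-bundle by \cite[p.~286, Proposition~4.1.23]{AM1978}; as recalled in the proof of Lemma~\ref{lemma:Reeb.to.csl}, $S$ carries a unique holomorphic structure making $\pi\colon M\to S$ a holomorphic principal $\C$-bundle. Under the identification $\LieG=\C$ and $\mathscr{X}_a=av$, the image of the trivial bundle $\LieG_M$ in $TM$ is exactly the line field $\C v$.

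For the forward direction I would verify that a holomorphic hyperplane field $V$ with $\LieDer_v V\subseteq V$ and $v(p)\notin V_p$ meets the three requirements of a holomorphic connection. The splitting $TM=\C v\oplus V$ is immediate: $V$ has rank $\dim M-1$ and $v(p)\notin V_p$ forces the fibrewise direct sum $T_pM=\C v(p)\oplus V_p$, which is holomorphic because both summands are holomorphic subbundles. For $G$-invariance, I would use that the fibers of $M\to S$ are the orbits of the connected group $\C$, so invariance under the full group action is equivalent to invariance under the flow of $v$, which is precisely the hypothesis $\LieDer_v V\subseteq V$. The connection $1$-form is then, by its defining property $\xi_V(\mathscr{X}_A)=A$ together with $\xi_V|_V=0$, the unique $1$-form with $\xi(v)=1$ and $\xi=0$ on $V$, uniqueness following from the pointwise decomposition. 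Finally $\xi$ is holomorphic, being built from the holomorphic data $(V,v)$, and its $G$-equivariance $g^*\xi=\Ad_g\xi$ reduces, since $\C$ is abelian and hence $\Ad_g=I$, to plain invariance, which by connectedness of the fibers is $\LieDer_v\xi=0$.

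The converse and the curvature formula are then formal. Given $\xi$ with $\LieDer_v\xi=0$ and $\xi(v)=1$, the form is nowhere zero, so $V:=\ker\xi$ is a holomorphic hyperplane field; $\xi(v)=1$ gives $v(p)\notin V_p$, and $\LieDer_v\xi=0$ means the flow of $v$ preserves $\xi$, hence preserves $\ker\xi=V$, so $\LieDer_v V\subseteq V$. By the forward direction $V$ is a holomorphic connection and, by uniqueness, $\xi$ is its connection form. The curvature is $\Omega(u,w)=d\xi(u,w)+[\xi(u),\xi(w)]$, whose bracket term vanishes because $\LieG=\C$ is abelian, leaving $\Omega=d\xi$.

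The statement has essentially no hard core, being a summary of the discussion that precedes it; the only point I expect to require care is the repeated passage between $\C$-action invariance and flow invariance under $v$ (between $g^*(\cdot)=\cdot$ and $\LieDer_v(\cdot)=0$), which rests on the connectedness of the group $\C$ and hence of the bundle fibers. Everything else is a transcription of the general construction with $\Ad_g=I$ and vanishing Lie bracket.
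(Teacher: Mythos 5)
Your proposal is correct and follows essentially the same route as the paper: the lemma there is stated as a ``Summing up'' of the preceding discussion on holomorphic connections, specialised to $G=\C$ with $\Ad_g=I$, vanishing Lie bracket, and connectedness of the fibers used to pass between group invariance and $\LieDer_v(\cdot)=0$, exactly as you do. Your write-up simply makes that implicit argument explicit, including the easy converse and the curvature formula $\Omega=d\xi$.
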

Suppose in addition that $M$ has odd complex dimension.
Then $(M,V,v)$ is a Reeb manifold if and only if the curvature $\omega$ is \emph{non-degenerate} in the sense that $(d\xi)^N\wedge\xi\ne 0$.
We characterise the contact symplectic lifts that are holomorphically exact in terms of flat connections.
\begin{proposition}\label{prop:flat-conn}
The contact form of any contact symplectic lift is a holomorphic connection form for its bundle.
The curvature of that connection is the symplectic form on the symplectic base manifold.
A contact symplectic lift \(M\to S\) is holomorphically exact if and only if its principal \(\C\)-bundle admits a holomorphic flat connection, or equivalently, the pullback bundle 
\[
\begin{tikzcd}
\tilde M\arrow{r}\arrow{d}&M\arrow{d}\\
\tilde{S}\arrow{r}&S
\end{tikzcd}
\]
to some covering space \(\tilde S\to S\) is a holomorphically trivial principal \(\C\)-bundle.
%
%
\end{proposition}
\begin{proof}
\par\noindent%
\begin{ProofSteps}
\ProofStep{Holomorphic exactness \(\iff\) a flat connection}
By Lemma~\vref{lemma:quotient.by.vf}, a holomorphic connection for the bundle $M\to S$ is precisely a holomorphic $1$-form $\xi'$ on $M$ so that $\ip_v{\xi'}=1$ and $\LieDer_v \xi'=0$, with curvature pulling back to $d\xi'$.

Suppose $\xi'$ is flat.
Let $\eta:=\xi-\xi'$ so $\ip_v{\eta}=0$.
Then $\eta$ is invariant under the flow of $v$ so $\LieDer_v\eta=0$.
By the Cartan formula,
\[
\ip_v{d\eta}=\LieDer_v\eta-d\ip_v{\eta}=0-0=0.
\]
By Lemma~\vref{lemma:semibasic}, $\eta=\pi^*\nu$ for a unique holomorphic $1$-form $\nu$ on $S$.
But $d\eta=d\xi-d\xi'=\pi^*\omega$, so $\pi^*(\omega-d\nu)=0$.
Pullback of differential forms by submersions is injective, so
$\omega=d\nu$.
Conversely, if $\omega=d\nu$, let $\eta:=\pi^*\nu$, let $\xi':=\xi-\eta$.
We see that $\xi'$ is a flat holomorphic connection.

\ProofStep{A flat holomorphic connection \(\iff\) trivialises on a cover}
Every (holomorphic) flat connection pulls back to some covering space to be (holomorphically) trivial, \ie every (holomorphic) flat connection is equivalent to a representation of the fundamental group \cite[p.~200 Proposition~14]{Atiyah57}.
\end{ProofSteps}
\end{proof}
\subsection{Monodromy}%
\label{subsection:monodromy}
Take a contact symplectic lift $(M, V, v, S, \omega)$.
Suppose the holomorphic symplectic form $\omega$ on $S$ is holomorphically exact, say $\omega=d\nu$.
By Lemma~\vref{lemma:quotient.by.vf}, the $1$-form $\theta:=\xi-\pi^*\nu$ is a flat holomorphic connection.
Its monodromy around loops in $S$ is therefore a group morphism
\[
\pi_1(S)\to\C,
\]
so that $M=\tilde{S}\times^{\pi_1(M)}\C$.
In particular, the monodromy of a flat connection vanishes if and only if the bundle is holomorphically trivial.

As $\C$ is abelian, this group morphism factors through the abelianisation $H_1(S,\mathbb{Z})$ \cite[p.~225, Theorem~17.21]{Bo-Tu}.
By de~Rham duality, the monodromy determines and is determined by a unique element $\mu=\mu_{\nu}\in H^1(S,\C)$, the \emph{monodromy} of the symplectic potential $\nu$.

Concretely, pick a point $p_0\in M$ and let $x_0:=\pi(p_0)\in S$.
Each loop \(\gamma\) on \(S\) starting and ending at $x_0$ has a unique parallel lift $\hat\gamma$ on $M$ starting at $p_0$.
Here, parallel means $\theta=0$.
This path ends at some point \(p_1=e^{tv}p_0\)
where \(t=\int_{\gamma}\mu\).

Replacing $\nu$ by $\nu':=\nu-df$ changes $\theta$ to $\theta':=\xi-\pi^*\nu+d\pi^*f$.
The $\theta'$-parallel lift of a path is
\[
\hat\gamma'(t):=e^{f(\gamma(t))v}\hat\gamma(t).
\]
The monodromy is unchanged, depending only on the holomorphic de~Rham cohomology class of the holomorphic symplectic potential $\nu$.
%
Lift up every path in $S$ through $x_0$ to a path in $M$ through $p_0$, parallel for the flat connection.
By the Frobenius theorem, the lifts all lie on a unique complex hypersurface $\hat S\subseteq M$.
By $\C$-invariance, the map $\hat S\to S$ is evenly covered, so a covering map.
The monodromy vanishes just when loops lift to loops, \ie when $\hat S\to S$ is a biholomorphism, so a global holomorphic section.
The pullback contact symplectic lift on $\hat S$ has holomorphically trivial bundle. 
A covering space of $S$ has holomorphically trivial pullback bundle if and only if the covering space covers $\hat S$.


\subsection{Characterisations of holomorphic exactness}
We now prove the equivalence of various geometric conditions on a contact symplectic lift $(M,V,v,S,\omega)$, some depending only on $(M,v)$ and others depending only on $(S,\omega)$.
\begin{theorem}\label{theorem:fit}
Suppose that $(M,V,v,S,\omega)$ is a contact symplectic lift.
Then the following are equivalent:
\begin{enumerate}[label*=\arabic*.]
\item
$M$ contains a complex submanifold of codimension one intersecting each Reeb orbit once transversely.
\item
The holomorphic principal $\C$-bundle $M\to S$ admits a global holomorphic section.
\item
The holomorphic principal $\C$-bundle $M\to S$ is holomorphically trivial.
\item
The contact symplectic lift $(M,V,v,S,\omega)$ is fit, \ie \(M=S\times\C_y\) and \(\xi=dy+\pi^*\nu\) for some holomorphic \(\nu\).
\item
The symplectic $\omega$ is holomorphically symplectically exact and has trivial monodromy.
\end{enumerate}
\end{theorem}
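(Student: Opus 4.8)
The plan is to prove the five conditions equivalent by establishing the cycle
$(1)\Rightarrow(2)\Rightarrow(3)\Rightarrow(4)\Rightarrow(5)\Rightarrow(1)$, leaning throughout on the principal-bundle formalism of Lemma~\ref{lemma:quotient.by.vf}, on Proposition~\ref{prop:flat-conn}, and on the monodromy discussion preceding the statement. The implications $(1)\Rightarrow(2)\Rightarrow(3)\Rightarrow(4)$ are essentially bundle-theoretic bookkeeping, while the implications connecting to $(5)$ carry the real content.

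For $(1)\Rightarrow(2)$, let $\Sigma\subseteq M$ be a codimension-one complex submanifold meeting each Reeb orbit once and transversely. The orbits are the fibers of $\pi$, and $\dimC\Sigma=2N=\dimC S$, so transversality forces $\C v(p)\cap T_p\Sigma=\{0\}$ at every $p\in\Sigma$; hence $d\pi_p|_{T_p\Sigma}$ is injective, thus an isomorphism, so $\pi|_\Sigma$ is a local biholomorphism, and the word ``once'' upgrades it to a biholomorphism $\Sigma\to S$ whose inverse is a global holomorphic section. For $(2)\Rightarrow(3)$, a section $\sigma\colon S\to M$ gives the equivariant biholomorphism $(x,t)\mapsto e^{tv}\sigma(x)$ from $S\times\C$ onto $M$, the standard section-implies-triviality fact for principal bundles. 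For $(3)\Rightarrow(4)$, in a trivialization $M=S\times\C$ with $v=\partial_y$ the connection form $\xi$ satisfies $\ip_v\xi=1$ and $\LieDer_v\xi=0$, so $\xi-dy$ is semibasic and $v$-invariant; by Lemma~\ref{lemma:semibasic} it equals $\pi^*\alpha$ for a holomorphic $1$-form $\alpha$ on $S$, and then $-\pi^*\omega=d\xi=\pi^*d\alpha$ forces $\omega=d(-\alpha)$, so with $\nu:=-\alpha$ we obtain $\xi=dy-\pi^*\nu$ and the lift is $\nu$-fit.

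For the remaining two steps I would invoke the monodromy picture. For $(4)\Rightarrow(5)$, fitness furnishes a holomorphic symplectic potential $\nu$ with $\omega=d\nu$ (so $\omega$ is holomorphically exact) and a trivialization in which $\xi=dy-\pi^*\nu$; then $\theta:=\xi+\pi^*\nu=dy$ is a flat holomorphic connection whose parallel transport keeps $y$ constant, loops lift to loops, and the monodromy $\mu_\nu\in H^1(S,\C)$ vanishes. For $(5)\Rightarrow(1)$, holomorphic exactness $\omega=d\nu$ produces, by Proposition~\ref{prop:flat-conn}, the flat holomorphic connection $\theta=\xi+\pi^*\nu$; its parallel lifts foliate $M$, and by the Frobenius theorem the lifts through a base point lie on a maximal leaf $\hat S\subseteq M$ that covers $S$. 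Vanishing monodromy $\mu_\nu=0$ makes this covering trivial, so $\pi|_{\hat S}\colon\hat S\to S$ is a biholomorphism, and $\hat S$ is then a codimension-one submanifold meeting each Reeb orbit exactly once and transversely, which is $(1)$.

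The main obstacle, and the conceptual heart of the statement, is the equivalence of $(5)$ with the other conditions: holomorphic exactness of $\omega$ \emph{by itself} does not trivialize the bundle. The monodromy of the flat connection $\theta=\xi+\pi^*\nu$ depends on the choice of potential $\nu$---replacing $\nu$ by $\nu+\beta$ for a closed holomorphic $1$-form $\beta$ shifts $\mu_\nu$ by the periods of $\beta$---so condition $(5)$ must be read as the \emph{existence} of a potential with vanishing monodromy. The delicate point is thus the identification, via the correspondence between flat connections and representations of $\pi_1(S)$ \cite[p.~200 Proposition~14]{Atiyah57}, of ``the bundle is holomorphically trivial'' with ``some flat connection has trivial monodromy''; this is exactly what drives both $(4)\Rightarrow(5)$ and $(5)\Rightarrow(1)$, and once it is in hand the cycle closes.
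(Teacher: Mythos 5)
Your proof is correct and takes essentially the same route as the paper's: the same trivialization and semibasic-form argument (Lemma~\ref{lemma:semibasic}) for $(3)\Rightarrow(4)$, and the same flat-connection, monodromy, and Frobenius-leaf machinery (Proposition~\ref{prop:flat-conn} and the paper's monodromy discussion) for the equivalences involving $(5)$. The differences are minor: you organize the implications as a single cycle and spell out $(1)\Rightarrow(2)$, which the paper treats as obvious, and your $(4)\Rightarrow(5)$ is in fact slightly more careful than the paper's, since you compute the monodromy of the specific connection $\xi+\pi^*\nu=dy$ for the fit potential rather than invoking the (false in general) claim that triviality of the bundle forces every flat connection to have trivial monodromy; you also rightly flag that condition $(5)$ must be read as existential in the potential $\nu$.
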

\begin{proof}
\par\noindent%
\begin{ProofSteps}
\item[1\(\,\leftrightarrow\,\)2] 
These cases are equivalent; we include them separately only to highlight that case 1 depends only on $(M,v)$.
\item[3\(\,\rightarrow\,\)2] is clear.
\item[2\(\,\rightarrow\,\)3]
Suppose that there is a global holomorphic section $f$ of $M\to S$.
Denote by \(\Phi_y\) the Reeb flow for complex time \(y\).
The map 
\[
(x,y)\in S\times\C\xmapsto{\tau} \Phi_y(f(x))\in M,
\]
is clearly a holomorphic bijection, so a biholomorphism.
This biholomorphism intertwines translation in $y$ with the flow of $v$.
\item[3\(\,\rightarrow\,\)4] 
Suppose $M=S\times\C$ with $v=\partial_y$ and $\pi(x,y)=x$.
Let $\theta:=\xi-dy$, so $\ip_v\xi=1$, so $\ip_{\partial_y}\theta=0$.
Both $dy$ and $\xi$ are invariant under the Reeb vector field, so $\theta$ is also, so $\LieDer_{\partial y}\theta=0$.
The fibres are connected.
By Lemma~\vref{lemma:semibasic}, $\theta$ is pulled back, \ie $\theta=\pi^*\nu$ for a unique holomorphic $1$-form $\nu$ and $\pi^*d\nu=d\theta=d\xi=\pi^*\omega$ yields $d\nu=\omega$.

The $1$-form $\xi-\pi^*\nu$ is a flat connection. 
The bundle is holomorphically trivial, so the monodromy vanishes.
So the bundle admits a parallel global section, say $y=h(x)$.
Replace $y$ by $y-h(x)$ as our trivialisation of $M$.
The level sets of $y$ parallel, \ie $\xi-\pi^*\nu$ vanishes on them.
So $dy$ and $\xi-\pi^*\nu$ equal $1$ on $\partial_y$ and vanish on the level sets of $y$, so are equal:\(\xi=dy+\pi^*\nu\), fit.
\item[4\(\,\rightarrow\,\)5] 
A $\nu$-fit lift has $\omega=d\nu$ and trivial bundle so the monodromy of any flat connection vanishes: so $\mu_\nu=0$.
\item[5\(\,\rightarrow\,\)3] 
Vanishing monodromy makes the bundle trivial.
\end{ProofSteps}
\end{proof}
\begin{corollary}
Suppose that $(S,\omega)$ is a holomorphically exact holomorphic symplectic manifold.
Suppose that $H^1(S,\C)=0$.
Then every contact symplectic lift over $(S,\omega)$ is fit.
\end{corollary}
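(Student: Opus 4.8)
The plan is to deduce this directly from the list of equivalences in Theorem~\ref{theorem:fit}, so essentially no new work is required beyond identifying the right equivalent condition. The target conclusion---that every contact symplectic lift over $(S,\omega)$ is fit---is exactly the fourth bullet of Theorem~\ref{theorem:fit}. I would obtain it by verifying the fifth bullet, which reads: $(S,\omega)$ is holomorphically symplectically exact, say $\omega=d\nu$, and the monodromy $\mu_\nu=0\in H^1(S,\mathbb{C})$.

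First I would observe that the hypothesis that $(S,\omega)$ is a holomorphically exact holomorphic symplectic manifold supplies, by definition, a holomorphic symplectic potential $\nu$ with $\omega=d\nu$. This is precisely the first half of the fifth condition, and it is also what makes the monodromy $\mu_\nu$ well defined as an element of $H^1(S,\mathbb{C})$ (as discussed in Subsection~``Monodromy''). Next I would invoke the second hypothesis: since $H^1(S,\mathbb{C})=0$, the group in which $\mu_\nu$ lives is trivial, so $\mu_\nu=0$ automatically, with no computation needed. Thus both halves of the fifth condition hold.

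Having verified the fifth equivalent condition, the equivalence asserted in Theorem~\ref{theorem:fit} immediately yields the fourth, namely that the contact symplectic lift is fit; since $(M,V,v,S,\omega)$ was an arbitrary contact symplectic lift over $(S,\omega)$, this proves the corollary. There is no genuine obstacle here: the content is entirely carried by Theorem~\ref{theorem:fit}, and the only point worth flagging is the logical order---one must note that holomorphic exactness of $\omega$ is what licenses speaking of $\mu_\nu\in H^1(S,\mathbb{C})$ in the first place, after which the vanishing of that cohomology group trivially forces $\mu_\nu=0$.
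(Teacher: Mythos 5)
Your proposal is correct and follows exactly the paper's own proof: use the hypothesis of holomorphic exactness to produce a symplectic potential $\nu$, note that the monodromy $\mu_\nu$ lies in $H^1(S,\mathbb{C})=\{0\}$ and hence vanishes, and then apply the equivalence of Theorem~\ref{theorem:fit} between the fifth condition (exactness plus vanishing monodromy) and the fourth (fitness). Your remark on the logical order---that exactness is what makes $\mu_\nu$ well defined before one invokes the vanishing of $H^1(S,\mathbb{C})$---is a fair point of emphasis but does not change the argument.
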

\begin{proof}
Because $(S,\omega)$ is holomorphically symplectically exact, $\omega=d\nu$ for some holomorphic symplectic potential $\nu$.
Take a contact symplectic lift $(M,V,v,S,\omega)$.
The monodromy $\mu_\nu\in H^1(S,\C)=\{0\}$ vanishes.
By Theorem~\vref{theorem:fit}, $(M,V,v,S,\omega)$ is fit.
\end{proof}
\begin{corollary}
Take a holomorphically exact contact symplectic lift $(M,V,v,S,\omega)$.
Take the covering space $\hat{S}=\tilde{S}/\mu_\nu$ given by quotienting the universal covering by the monodromy.
Then $(M,V,v,S,\omega)$ pulls back to $\hat{S}$ to become fit.
Conversely, if the pullback of the contact symplectic lift to a covering space of $S$ is fit then the pullback of $\omega$ to that covering space is holomorphically exact, say with symplectic potential $\nu$.
The monodromy of that symplectic potential vanishes on that covering space.
Equivalently, the covering space is a covering space of \(\hat{S}\).
\end{corollary}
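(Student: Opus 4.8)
The plan is to reduce both directions to the equivalence (fit) $\iff$ ($\omega$ holomorphically exact with vanishing monodromy) established in Theorem~\ref{theorem:fit}, combined with the fact that the monodromy of a holomorphic symplectic potential is natural under pullback to a covering space. Write $\omega=d\nu$ for a holomorphic symplectic potential $\nu$, and let $m_\nu\colon\pi_1(S)\to\C$ denote the associated monodromy homomorphism $\gamma\mapsto\int_\gamma\mu_\nu$, so that $\mu_\nu\in H^1(S,\C)$ is its class. By definition, $\hat S=\tilde S/\mu_\nu$ is the covering space $\rho\colon\hat S\to S$ whose fundamental group $\rho_*\pi_1(\hat S)\subseteq\pi_1(S)$ is the kernel $\Gamma:=\ker m_\nu$ of the monodromy, exactly as in the Monodromy subsection.

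For the forward direction, I would first check that the pullback $(\hat M,\hat V,\hat v,\hat S,\hat\omega)$, where $\hat M:=\rho^*M$ and $\hat\omega:=\rho^*\omega$, is again a contact symplectic lift: by the pullback bundle construction, $\hat\pi\colon\hat M\to\hat S$ is a holomorphic $\C$-principal bundle, and since $\rho$ is a local biholomorphism, the pulled-back contact form $\hat\xi$ is holomorphic, satisfies $\hat\xi(\hat v)\equiv 1$, and obeys $d\hat\xi=-\hat\pi^*\hat\omega$. Then $\hat\nu:=\rho^*\nu$ is a holomorphic symplectic potential on $\hat S$, so $\hat\omega=d\hat\nu$ is holomorphically exact; and the monodromy homomorphism of $\hat\nu$ is the composite $\pi_1(\hat S)\xrightarrow{\rho_*}\pi_1(S)\xrightarrow{m_\nu}\C$. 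Since $\rho_*\pi_1(\hat S)=\Gamma=\ker m_\nu$, this composite vanishes, whence $\mu_{\hat\nu}=0\in H^1(\hat S,\C)$. By the last equivalence in Theorem~\ref{theorem:fit}, the pulled-back lift over $\hat S$ is fit.

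For the converse, suppose the pullback of $(M,V,v,S,\omega)$ along some covering $\rho\colon S'\to S$ is fit. Applying Theorem~\ref{theorem:fit} directly to this pullback contact symplectic lift over $S'$, fitness is equivalent to $(S',\rho^*\omega)$ being holomorphically symplectically exact---say $\rho^*\omega=d\nu'$ for a holomorphic symplectic potential $\nu'$---together with the vanishing $\mu_{\nu'}=0\in H^1(S',\C)$ of its monodromy, which is exactly the asserted conclusion. The only subtle point in the whole argument, and the one I would treat most carefully, is the naturality identity $m_{\rho^*\nu}=m_\nu\circ\rho_*$: this follows by pulling back the $\theta$-parallel lifts of loops described in the Monodromy subsection along $\rho$, since a loop in $S'$ lifts to the $\rho^*\theta$-parallel path whose endpoint is governed by $\int_{\rho\circ\hat\gamma}\mu_\nu$, the monodromy of its projection in $S$.
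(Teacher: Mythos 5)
Your proposal is correct and follows essentially the same route as the paper, which states this corollary as an immediate consequence of Theorem~\ref{theorem:fit} together with the preceding Monodromy subsection (where the covering $\hat{S}$ is exactly characterized as the one on which the flat connection $\xi+\pi^*\nu$, hence the bundle and the monodromy, becomes trivial). Your only added ingredient—the naturality identity $m_{\rho^*\nu}=m_\nu\circ\rho_*$, verified by matching $\rho^*\theta$-parallel lifts in the pullback with $\theta$-parallel lifts downstairs—is precisely the detail the paper leaves implicit, and your argument for it is sound.
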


\section{The lifts of a given symplectic manifold}\label{Sec:ineq-lifts}
In this section, we classify all contact symplectic lifts of a given holomorphic symplectic manifold, in terms of cohomological data on that manifold. The key result is that isomorphism classes of lifts are parametrised by a first cohomology group.
\subsection{Classifying the bundles that arise}
First, we find precisely which holomorphic principal \(\C\)-bundles arise as the bundles of contact symplectic lifts.

A \emph{torsor} \(X\) modelled on a group \(G\) is a set acted on freely and transitively by \(G\) \cite[p.~6]{Go15}.
Suppose that \(V\) is a vector space. 
Think of \(V\) as an abelian group under vector addition.
A torsor modelled on \(V\) is called an \emph{affine space} modelled on \(V\) \cite[p.~6]{Go15}.
We introduce this terminology because our moduli spaces below are affine spaces.
Recall that every holomorphic principal \(\C\)-bundle \(M\to S\) is identified up to isomorphism with its bundle class \([M]\in H^1(S,\mathcal{O})\).
Hence the isomorphism classes form an affine space modelled on \(H^1(S,\mathcal{O})\).
Since the trivial bundle is a distinguished point, this affine space is a vector space.
\begin{theorem}%
[Bundle moduli of contact symplectic lifts]%
\label{thm:bundle.moduli}
Suppose that \((S,\omega)\) is a \(C^{\infty}\)-exact holomorphic symplectic manifold.
Any class in \(H^1(S,\mathcal{O})\) arises as the bundle class of a contact symplectic lift \(M\to S\) just when it maps to \(\cls\omega\) in \(H^1(S,\mathcal{O})/\iota_*H^1(S,\C)\).
Any two contact symplectic lifts of the same \((S,\omega)\) have bundles whose associated bundle classes in \(H^1(S,\mathcal{O})\) differ by a unique element of \(\iota_*H^1(S,\C)\).
This difference vanishes precisely when the bundles are isomorphic.
Thus, the set of isomorphism classes of these bundles is an affine space modelled on \(\iota_*H^1(S,\C)\).
\end{theorem}
\begin{proof}
Suppose that we replace the choices  of \(\nu_\alpha\) and \(f_{\alpha\beta}\) in the proof of theorem~\vref{thm:C.infty.exact.iff.csl}, say to new choices \(\nu'_\alpha\) and \(f'_{\alpha\beta}\), to produce a new lift, with
\begin{align*}
d\nu'_\alpha&=\omega,\\
df'_{\alpha\beta}&=\nu'_\beta-\nu'_\alpha,\\
0&=f'_{\alpha\beta}+f'_{\beta\gamma}+f'_{\gamma\alpha},\\
\xi'&=dy'_{\alpha}+\nu'_\alpha.
\end{align*}
Then \(\nu'_\alpha=\nu_\alpha+df_\alpha\), for a holomorphic function \(f_\alpha\), unique up to adding a constant.
Let
\[
c_{\alpha\beta}:=f'_{\alpha\beta}-f_{\alpha\beta}-f_\beta+f_\alpha.
\]
Differentiate to find that each \(c_{\alpha\beta}\) is constant.
Check that the \(\{c_{\alpha\beta}\}\) satisfy the cocycle condition.
So the new data arises from the old by
\begin{align*}
f'_{\alpha\beta}&=c_{\alpha\beta}+f_{\alpha\beta}+f_\beta-f_\alpha,\\
\nu'_\alpha&=\nu_\alpha+df_\alpha.
\end{align*}

On the other hand, we take any constants \(c_{\alpha\beta}\) satisfying the cocycle condition, and any holomorphic function \(f_\alpha\) on each \(S_\alpha\).
These equations define a new contact symplectic lift.
Every element of \(H^1(S,\mathcal{O})\) mapping to \(\cls\omega\in H^1(S,\mathcal{O})/\iota_*H^1(S,\C)\) is the class of a holomorphic principal \(\C\)-bundle that arises as a contact symplectic lift.
%
%
\end{proof}
\begin{remark}
In the notation of the previous proof, given a closed holomorphic \(1\)-form \(\eta\) on \(S\), write \(\eta=dg_\alpha\) on each \(S_\alpha\).
Let \(c_{\alpha\beta}=g_\beta-g_\alpha\) on \(S_{\alpha\beta}\).
Check that these are constants.
Let
\begin{align*}
y'_\alpha&=y_\alpha-g_\alpha,\\
f'_{\alpha\beta}&=c_{\alpha\beta}+f_{\alpha\beta},\\
\nu'_{\alpha}&=\nu_\alpha+dg_\alpha.
\end{align*}
This is a bundle automorphism \(F\) so that \(F^*\xi=\xi+\pi^*\eta\).
So pulling back holomorphic \(1\)-forms and adding them to the contact form of a contact symplectic lift creates an isomorphic contact symplectic lift.
\end{remark}
\subsection{The moduli space}
\begin{definition}
Take a \(C^{\infty}\)-exact holomorphic symplectic manifold \((S,\omega)\).
Fix a contact symplectic lift \(\mathcal M\) of \((S,\omega)\), with local potentials \(\nu_\alpha\) and transition maps \(f_{\alpha\beta}\).
Every contact symplectic lift \(\mathcal M'\), using the same set of local potentials, has transition maps \(f'_{\alpha\beta}=f_{\alpha\beta}+c_{\alpha\beta}\) for a cocycle \(\{c_{\alpha\beta}\}\) of complex constants.
These determine a cohomology class, the \emph{difference class} of the two contact symplectic lifts, which we denote 
\[
[\mathcal M'-\mathcal M]:=[\{c_{\alpha\beta}\}]\in H^1(S,\C).
\]
\end{definition}
We prove that the difference class characterises contact symplectic lifts up to isomorphism.
The moduli space of contact symplectic lifts of a given holomorphic symplectic manifold \((S,\omega)\) is an affine space modelled on \(H^1(S,\C)\).
\begin{theorem}[Moduli of contact symplectic lifts]\label{thm:how-many-lifts}
Fix a contact symplectic lift \(\mathcal M\) of a \(C^{\infty}\)-exact holomorphic symplectic manifold \((S,\omega)\).
Every contact symplectic lift \(\mathcal M'\) of \((S,\omega)\) is determined uniquely up to isomorphism by the difference class \([\mathcal M'-\mathcal M]\).
Every cohomology class in \(H^1(S,\C)\) occurs as this difference class.
Thus, the space of  isomorphism classes of contact symplectic lifts of \((S,\omega)\) is an affine space modelled on \(H^1(S,\C)\).
\end{theorem}
\begin{proof}
Continuing from the proof of theorem~\vref{thm:bundle.moduli}, every bundle isomorphism on the new bundle is expressed precisely as
\[
y''_\alpha:=y'_\alpha+g_\alpha,
\]
for any holomorphic function \(g_\alpha\) on each \(S_\alpha\).
This bundle isomorphism changes the transition functions to
\[
f''_{\alpha\beta}=f'_{\alpha\beta}+g_\alpha-g_\beta.
\]
So if we choose \(g_\alpha=f_\alpha\), we arrange that the new contact symplectic lift arises from the old lift by:
\begin{align*}
f'_{\alpha\beta}&=c_{\alpha\beta}+f_{\alpha\beta},\\
\nu'_\alpha&=\nu_\alpha.
\end{align*}
So in a suitable choice of local trivialisations, we can arrange that the new and old contact symplectic lifts are expressed precisely by the differences \(c_{\alpha\beta}\).
These are then uniquely determined up to adding differences \(c_\beta-c_\alpha\) of constants.
\end{proof}

\begin{corollary}\label{Cor:unique-contact}
Let $S$ be a complex manifold of complex dimension $2N$. Then $H^1(S, \C)=\{0\}$ just when every $C^\infty$-exact holomorphic symplectic form $\omega$ on $S$ has a unique (up to isomorphism) contact symplectic lift.
Furthermore, if $\omega$ is a holomorphically exact symplectic form on $M$, say $\omega=d\nu$ for some holomorphic $1$-form on $S$, then, up to isomorphism, the only contact symplectic lift of $(S,\omega)$ is $(S\times \C, \ker \xi, \partial_y, S, d\nu)$ with contact form $\xi=dy+\nu$ and $y\in \C$.
\end{corollary}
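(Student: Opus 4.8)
The plan is to obtain both assertions as immediate consequences of Theorem~\ref{thm:how-many-lifts}, supplemented by one explicit construction. For the first statement, I would begin by noting that, since $\omega$ is $C^\infty$-exact, Theorem~\ref{thm:C.infty.exact.iff.csl} guarantees that the collection $\mathcal C_{S,\omega}$ of contact symplectic lifts of $(S,\omega)$ is nonempty. Fixing any $\mathcal M_0\in\mathcal C_{S,\omega}$, Theorem~\ref{thm:how-many-lifts} asserts that the map $\Psi_{\mathcal M_0}\colon\mathcal M_{(S,\omega)}\to H^1(S,\C)$, $\mathcal M\mapsto\Theta(\mathcal M_0,\mathcal M)$, is a bijection. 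Under the hypothesis $H^1(S,\C)=\{0\}$ the target is trivial, so $\mathcal M_{(S,\omega)}$ consists of a single point; that is, the contact symplectic lift of $(S,\omega)$ is unique up to equivalence.

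For the ``furthermore'' part, I would first exhibit a concrete lift and then invoke the uniqueness just proved. Assume $\omega=d\nu$ for a holomorphic $1$-form $\nu$ on $S$. Set $M:=S\times\C$ with coordinate $y$ on the second factor, take $\pi(x,y)=x$ and $v:=\partial_y$, so that $\pi\colon M\to S$ is the trivial $\C$-principal bundle whose $\C$-action is translation in $y$; this is condition~(1) of Definition~\ref{Def:contact-sympl-lift}. Putting $\xi:=dy-\pi^*\nu$, the remaining verifications are routine: $\xi(\partial_y)=1$, so $\xi$ is the contact form associated to $v$; one computes $d\xi=-\pi^*d\nu=-\pi^*\omega$, which is condition~(2); and
\[
\xi\wedge(d\xi)^N=(-1)^N(dy-\pi^*\nu)\wedge\pi^*\omega^N=(-1)^N\,dy\wedge\pi^*\omega^N\ne 0,
\]
the term $\pi^*\nu\wedge\pi^*\omega^N$ vanishing since it is the pullback of a $(2N+1)$-form from the $2N$-dimensional manifold $S$, while $dy\wedge\pi^*\omega^N\ne 0$ because $\omega^N$ is nowhere zero. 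Hence $V=\ker\xi$ is a genuine contact structure and $(S\times\C,\ker\xi,\partial_y,S,d\nu)$ is a contact symplectic lift of $(S,\omega)$.

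To conclude, I would observe that a holomorphically exact form is a fortiori $C^\infty$-exact, so the uniqueness established in the first part applies; the explicit trivial lift just constructed is therefore the unique contact symplectic lift of $(S,\omega)$ up to equivalence. I do not anticipate any genuine obstacle here, since the entire content is already packaged in Theorem~\ref{thm:how-many-lifts}; the only labor is the bookkeeping check that the trivial bundle equipped with $\xi=dy-\pi^*\nu$ meets Definition~\ref{Def:contact-sympl-lift}. The single point requiring a moment's care is confirming that holomorphic exactness entails $C^\infty$-exactness, so that the uniqueness statement may legitimately be invoked in the holomorphically exact setting.
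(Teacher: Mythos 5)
Your proposal is correct and follows exactly the route the paper intends: the paper states this corollary without proof as an immediate consequence of Theorem~\ref{thm:how-many-lifts} (existence coming from Theorem~\ref{thm:C.infty.exact.iff.csl}, uniqueness from the bijection with $H^1(S,\C)=\{0\}$), and the trivial lift $\xi=dy-\pi^*\nu$ you construct for the holomorphically exact case is the same model used elsewhere in the paper (e.g.\ Remark~\ref{Rem:trivial-case}). Your verification that $\xi$ is a contact form and that exactness in the holomorphic category implies $C^\infty$-exactness is the right bookkeeping, so nothing is missing.
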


Take two contact symplectic lifts $(M,V,v,S,\omega)$ and $(M',V',v',S,\omega)$.
It might be that $M\to S$ and  $M'\to S$ are isomorphic as $\C$-principal bundles but not isomorphic as contact symplectic lifts.
This can occur even if $S$ is Kobayashi hyperbolic.
\begin{example}%
\label{non-equiv}
Let $S:=\D^\ast\times\D$, where $\D^\ast$ denotes the punctured disc. Then $S$ is Kobayashi hyperbolic, Stein, and $H^1(S,\C)=\C$. Denote points of $\C^2$ by $(z,w)$.
Let $\omega:=dz\wedge dw$. Then $(S,\omega)$ is holomorphically exact. By Theorem~\ref{thm:how-many-lifts}, contact symplectic lifts of $(S,\omega)$ are in bijection with $H^1(S,\C)=\C$.
In particular, there are infinitely many lifts, all contact-hyperbolic.
As $S$ is Stein, $H^1(S,\mathcal O_S)=0$, so every holomorphic principal $\C$-bundle is trivial, so the lifts of \((S,\omega)\) are contact structures on the trivial bundle \(M=S\times\C\).

To be more explicit, consider the Leray cover \(\{S_0,S_1,S_2\}\) where
\[
S_j = \set[(z,w) \in S]{\Arg(z) \in \left(\frac{2\pi j}{3}-\varepsilon, \frac{2\pi(j+1)}{3} + \varepsilon\right)},
\]
and \(\varepsilon\) can be any real number with \(0 < \varepsilon < \pi/3\).
Any class in $H^1(\{S_\alpha\}, \C)$ has a representative of the form $(0,0,c)$ for $c \in \C$. Choose a branch of $\log z$ on each $S_j$ that $y_j:=y + \frac{c}{2 \pi i} \log z$ satisfy $y_1 - y_2 = 0$, $y_2 - y_3 = 0$, but $y_3 - y_1 = c$.
So $dy_j = dy + \frac{c}{2 \pi i} \frac{dz}{z}$.
We obtain nonisomorphic contact manifolds $(S \times \C, dy - (w - \frac{c}{2\pi iz})dz)$ for $c \in \C$.
\end{example}
\subsection{Reeb morphisms}
We describe morphisms between lifts.
\begin{definition}
Take Reeb manifolds \((M,V,v)\) and \((M',V',v')\).
Take a complex constant \(\lambda\ne 0\).
A \emph{\(\lambda\)-scale Reeb morphism} \((M,V,v)\xrightarrow{\Phi}(M',V',v')\) is a holomorphic immersion \(M\xrightarrow{\Phi}M'\)
equivariant for the \(\C\)-actions, so that \(\Phi_*v=v'/\lambda\) and \(\Phi^*\xi'=\lambda \xi\).
A \emph{scale Reeb morphism} is a \(\lambda\)-Reeb morphism for some \(\lambda\).
\end{definition}
\begin{definition}
A \emph{scale Reeb automorphism} of a Reeb manifold \((M,V,v)\) is a bijective scale Reeb morphism $\Phi\colon (M,V,v)\to (M,V,v)$.
Denote by \(\Aut (M,V,v)\) the group of scale Reeb automorphisms.
\end{definition}
\begin{definition}
Take holomorphic symplectic manifolds $(S,\omega)$ and $(S',\omega')$ and a complex number $\lambda\ne 0$.
A \emph{$\lambda$-scale symplectic map} is a holomorphic map \(S\xrightarrow{\varphi}S'\) so that \(\varphi^*\omega'=\lambda\omega\).
A \emph{scale symplectic map} is a $\lambda$-scale symplectic map for some complex constant \(\lambda\ne 0\).
\end{definition}
\begin{definition}
A \emph{scale symplectic automorphism} of a holomorphic symplectic manifold \((S,\omega)\) is a scale symplectic map $\varphi\colon (S,\omega)\to (S,\omega)$.
We also say that $\varphi$ \emph{preserves $\omega$ up to scale}.
Denote by \(\Aut_{\C\omega}S\) the group of scale symplectic automorphisms.
\end{definition}
\begin{definition}
Take contact symplectic lifts \(\mathcal M=(M,V,v,S,\omega)\) and \(\mathcal M'=(M',V',v',S',\omega')\).
Take a complex constant \(\lambda\ne 0\).
A \emph{\(\lambda\)-scale contact symplectic lift morphism} \(\mathcal M\xrightarrow{\Phi}\mathcal M'\) is a \(\lambda\)-scale Reeb morphism $(M,V,v)\xrightarrow{\Phi}(M',V',v')$ and a \(\lambda\)-scale symplectic map \(S\xrightarrow{\varphi}S'\) so that the diagram
\[
\begin{tikzcd}
M\arrow[d,"\pi"']\arrow[r,"\Phi"]&M'\arrow[d,"\pi'"]\\
S\arrow[r,"\varphi"]&S'
\end{tikzcd}
\]
commutes.
A \emph{scale contact symplectic lift morphism} is a \(\lambda\)-contact symplectic lift morphism for some \(\lambda\ne 0\).
A \emph{scale contact symplectic lift isomorphism} is a bijective scale contact symplectic lift morphism.
A \emph{scale contact symplectic lift automorphism} is a scale contact symplectic lift isomorphism \(\mathcal M\xrightarrow\varphi\mathcal M\) from a contact symplectic lift to itself.
\end{definition}
Part of the motivation for the definition of scale Reeb morphism is that we can only rescale Reeb vector fields by constants.
\begin{lemma}\label{lemma:rescaling.Reeb}
Suppose that \((M,V,v)\) is a Reeb manifold and \(f\) is a holomorphic function on \(M\).
Suppose that \(M\) is connected.
Then \(fv\) is also a Reeb vector field if and only if \(f\) is a nonzero constant.
\end{lemma}
\begin{proof}
Let \(w:=fv\).
Denote by \(\xi\) the contact form associated to \(v\).
To preserve \(V\), we need precisely that \(w\) preserves \(\xi\) up to a multiple of \(\xi\), \ie \(0=\xi\wedge \LieDer_w \xi\).
Recall
\[
0=\LieDer_v\xi=\ip_v(d\xi)+d(\ip_v\xi)=\ip_v(d\xi),
\]
so \(\xi\wedge\ip_v(d\xi)=0\).
Compute:
\begin{align*}
\xi\wedge\LieDer_w \xi
&=
\xi\wedge(\ip_w{d\xi}+d(\ip_w{\xi})),
\\
&=\xi\wedge(f\ip_v{d\xi}+df),
\\
&=\xi\wedge\wedge df.
\end{align*}
So \(\xi\wedge df=0\), say \(df=g\xi\).
Differentiate: \(0=dg\wedge \xi+g\,d\xi\).
Wedge: \(0=g\xi\wedge (d\xi)^N\).
So \(g=0\), and \(df=0\), \(f\) is locally constant.
Since \(M\) is connected, \(f\) is constant.
\end{proof}

\subsection{Pullback of contact symplectic lifts}
\begin{definition}
Suppose that $\mathcal M'=(M',V',v',S',\omega')$ is a contact symplectic lift and that $S\xrightarrow\varphi S'$ is a holomorphic map.
Let $M:=\varphi^*M'$ be the pullback bundle, with the pullback bundle morphism $\Phi$:
\[
\begin{tikzcd}
M\arrow[d,"\pi"']\arrow[r,"\Phi"]&M'\arrow[d,"\pi'"]\\
S\arrow[r,"\varphi"]&S'.
\end{tikzcd}
\]
For each point $p=(s,p')\in M$, let $V_p:=d\Phi_p^{-1}V'_{p'}\subseteq TM$.
As $M\to S$ is a principal right $\C$-bundle, differentiating the $\C$-action gives an infinitesimal generator, the unique holomorphic vector field $v$ on $M$ making the flows commute:
\[
e^{tv}(s,p')=(s,e^{tv'}p').
\]
Let \(\omega:=\varphi^*\omega'\).
Then the tuple $\mathcal M:=(M,V,v,S,\omega)$ is the \emph{pullback} of the contact symplectic lift $(M',V',v',S',\omega')$ by the map $S\xrightarrow\varphi S'$, also denoted 
\(\varphi^*\mathcal M'\).
\end{definition}

The pullback $\mathcal M$ is not necessarily a contact symplectic lift, because the pullback form $\varphi^*\omega$ is not necessarily symplectic.
\begin{proposition}\label{prop:pullback.symplectic}
The pullback $(M,V,v,S,\omega)=\varphi^*(M',V',v',S',\omega')$ of a contact symplectic lift by a holomorphic map $S\xrightarrow{\varphi}S'$ is a contact symplectic lift if and only if 
\[
(\varphi^*\omega')^{\dimC S/2}\neq 0,
\]
\ie $S\xrightarrow{\varphi}S'$ pulls back the holomorphic symplectic form on $S'$ to a holomorphic symplectic form.
If this occurs then the associated principal bundle morphism $M\xrightarrow{\Phi}M'$ is a $1$-scale Reeb morphism.
It is a $1$-scale contact symplectic lift morphism.
\end{proposition}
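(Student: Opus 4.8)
The plan is to recognise that, for the pullback $(M,V,v,S,\omega)=\varphi^*(M',V',v',S',\omega')$, \emph{every} defining property of a contact symplectic lift except non-degeneracy of the symplectic form is already built into the construction, and then to show that the contact condition on $V$ is \emph{equivalent} to non-degeneracy of $\omega=\varphi^*\omega'$.

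First I would put $\xi:=\Phi^*\xi'$, where $\xi'$ is the contact form of the given lift normalised by $\xi'(v')\equiv 1$. From the pullback-of-principal-bundles construction the map $\pi\colon M\to S$ is already a holomorphic $\C$-principal bundle on which the $\C$-action is free and proper, and its infinitesimal generator is precisely the field $v$ fixed by the commuting-flows condition $e^{tv}(s,p')=(s,e^{tv'}p')$; differentiating $\Phi\circ e^{tv}=e^{tv'}\circ\Phi$ gives $\Phi_*v=v'$. Using this, together with $\iota_{v'}\xi'=1$, $\mathcal L_{v'}\xi'=0$ and the $\C$-equivariance of $\Phi$, I would check $\iota_v\xi=1$ and $\mathcal L_v\xi=0$, so that by Lemma~\ref{lemma:quotient.by.vf} the hyperplane field $V=\ker\xi=d\Phi^{-1}V'$ is a holomorphic connection on $\C\to M\xrightarrow{\pi}S$ with connection form $\xi$ and curvature $d\xi$. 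Condition~(2) of Definition~\ref{Def:contact-sympl-lift} then drops out of the commuting square $\pi'\circ\Phi=\varphi\circ\pi$, namely $d\xi=\Phi^*d\xi'=-\Phi^*\pi'^*\omega'=-\pi^*\varphi^*\omega'=-\pi^*\omega$.

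It remains to decide when $(M,V,v)$ is a Reeb manifold, i.e.\ when $V$ is a contact structure. Since $M$ has odd complex dimension $2N+1$ and $v$ already flows freely and properly, the remark following Lemma~\ref{lemma:quotient.by.vf} reduces this to the curvature non-degeneracy $\xi\wedge(d\xi)^N\neq 0$. Here I would run the semibasic computation $\xi\wedge(d\xi)^N=(-1)^N\,\xi\wedge\pi^*(\omega^N)$; because $\pi^*(\omega^N)$ is annihilated by $v$ while $\iota_v\xi=1$, contracting with $v$ yields $\iota_v(\xi\wedge\pi^*\omega^N)=\pi^*\omega^N$, so $\xi\wedge(d\xi)^N\neq 0$ exactly when $\pi^*\omega^N\neq 0$, which by injectivity of pullback under the submersion $\pi$ means $\omega^N\neq 0$, i.e.\ that $\omega=\varphi^*\omega'$ is a holomorphic symplectic form on $S$ --- exactly the condition of the statement. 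This gives one implication; the converse is immediate, since a contact symplectic lift requires $(S,\omega)$ to be a holomorphic symplectic manifold by Definition~\ref{Def:contact-sympl-lift}, forcing $\varphi^*\omega'$ to be non-degenerate.

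For the final assertion I would observe that non-degeneracy of $\varphi^*\omega'$ forces $\varphi$ to be an immersion: a nonzero $X\in\ker d\varphi_s$ would give $(\varphi^*\omega')(X,\cdot)=\omega'(d\varphi_s X,\cdot)=0$, contradicting non-degeneracy at $s$. By the immersion clause of the pullback construction (equivalently, since $\ker d\Phi_p\cong\ker d\varphi_s$), $\Phi$ is then a holomorphic immersion; combined with $\Phi_*v=v'$ and $\Phi^*\xi'=\xi$ this makes $\Phi$ a $1$-Reeb morphism, and, as the square commutes with $\varphi^*\omega'=\omega$, a $1$-contact symplectic lift morphism. I expect the only genuinely delicate point to be exactly this last step: a Reeb morphism is by definition an immersion, so the non-degeneracy hypothesis is used not merely to produce a symplectic form on $S$ but precisely to upgrade the bundle map $\Phi$ to a morphism, which would otherwise fail to be an immersion wherever $d\varphi$ degenerates.
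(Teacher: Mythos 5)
Your proof is correct, but it takes a genuinely different route from the paper's. The paper argues locally: after noting that nondegeneracy of $\varphi^*\omega'$ makes $\varphi$ an immersion, it embeds $S$ locally into $S'$ as a symplectic submanifold, invokes the holomorphic Darboux--Weinstein theorem to get \emph{adapted} Darboux coordinates on a fit neighbourhood $\pi'^{-1}(U')\subseteq M'$ where $\xi'=dy-\nu'$, and then reads off that the restriction $\xi=dy-\nu$ to $\pi^{-1}(U)$ is the standard contact form, vanishing exactly on $V$. You instead argue globally and coordinate-free: setting $\xi:=\Phi^*\xi'$, you verify $\iota_v\xi=1$, $\mathcal{L}_v\xi=0$ and $d\xi=-\pi^*\omega$ from equivariance and the commuting square, so that Lemma~\ref{lemma:quotient.by.vf} and the remark following it reduce the whole question to nondegeneracy of the curvature; the contraction identity $\iota_v\bigl(\xi\wedge\pi^*\omega^N\bigr)=\pi^*\omega^N$ then shows that $\xi\wedge(d\xi)^N$ and $\pi^*\omega^N$ vanish at exactly the same points, giving a pointwise equivalence between the contact condition upstairs and nondegeneracy of $\omega=\varphi^*\omega'$ downstairs. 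Your route buys economy and self-containedness: it avoids Darboux--Weinstein entirely (which the paper cites only in the real category, asserting the holomorphic proof is identical), and it makes the equivalence transparently pointwise. The paper's route buys extra information: it exhibits an explicit local normal form showing the pullback lift is fit over suitable neighbourhoods, which is more than the bare statement needs. You are also slightly more careful than the paper on the final clause, isolating precisely where nondegeneracy is used to make $\varphi$, and hence $\Phi$, an immersion --- the point that upgrades the bundle map to a $1$-Reeb morphism, since a Reeb morphism is by definition an immersion.
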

\begin{proof}
Let $N:=(1/2)\dimC{S}$.
If the pullback is a contact symplectic lift then $\omega$ is a holomorphic symplectic form so $\omega^N\ne 0$.

Suppose that $\omega^N\ne 0$.
As $\omega=\varphi^*\omega'$, 
\[
d\omega=d(\varphi^*\omega')=\varphi^*d\omega'=0.
\]
So $\omega$ is a holomorphic symplectic form and $S\xrightarrow\varphi S'$ is a holomorphic immersion.
Take the contact form $\xi'$ on $M'$ associated to the Reeb vector field $v'$.
Pick a point $p\in M$, say $p=(s,p')\in S\times M'$.
By Lemma~\vref{lemma:nu.fit}, we have a connected open set $U'\subseteq S'$ containing $p'$ with Darboux coordinates $z,w$, and coordinates $(z,w,y)$ on the preimage of \(U'\) in \(M'\) making $U'$ fit.
Shrinking $U'$ if needed, some open neighborhood $U\subseteq S$ of $s$ is a closed embedded symplectic submanifold of $U'$.

By the Darboux--Weinstein theorem \cite[p.~155 Theorem~22.1]{GS1990}, with statement and proof applied verbatim for holomorphic symplectic manifolds, after perhaps replacing $U'$ by a smaller open set, we have \emph{adapted} global Darboux coordinates $z,w$, \ie global Darboux coordinates in which the symplectic submanifold is
\begin{align*}
z_j&=0, j=N+1,\dots,N', \\
w_j&=0, j=N+1,\dots,N':
\end{align*}
with symplectic form
\begin{align*}
\omega'&=
dz_1\wedge dw_1+
\dots
+
dz_N\wedge dw_N+
\dots
+
dz_{N'}\wedge dw_{N'},\\
\omega&=
dz_1\wedge dw_1+
\dots
+
dz_N\wedge dw_N.
\end{align*}
and \(\xi'=dy+\nu'\) with 
\[
\nu'=-w_1\,dz_1-\dots-w_N\,dz_N-\dots-w_{N'}dz_{N'}.
\]
The open set $\pi^{-1}U\subseteq M$ is precisely cut out by the same equations as $U$, \ie 
\begin{align*}
z_j&=0, j=N+1,\dots,N', \\
w_j&=0, j=N+1,\dots,N',
\end{align*}
with $y$ free.
With $\nu=:-\sum_{j=1}^Nw_j dz_j$, \(\xi=dy+\nu\) on $\pi^{-1}U$ is the pullback of $\xi'$, and a contact form with kernel $V_p$.
So \(\xi\) vanishes precisely on the vectors in $T_p M$ that lie tangent to $\pi^{-1}U$ and lie inside $V'_{p'}$, \ie have $\xi'=0$, \ie precisely on $V_p$.
\end{proof}
Given two contact symplectic lifts \(\mathcal M,\mathcal M'\) of the same holomorphic symplectic manifold, Theorem~\vref{thm:how-many-lifts} defines a difference class \([\mathcal M-\mathcal M']\in H^1(S,\C)\).
The difference class vanishes precisely when they are isomorphic.

For a contact symplectic lift 
\[
\mathcal M:=(M,V,v,S,\omega),
\]
and complex number \(\lambda\ne 0\), let
\[
\lambda\mathcal M:=(M,V,v/\lambda,S,\lambda\omega).
\]
This scaling commutes with isomorphism to define a scaling by complex nonzero constants on isomorphism classes and 
\[
\lambda[\mathcal M'-\mathcal M]
=
[\lambda\mathcal M'-\lambda\mathcal M]\in H^1(S,\C).
\]

Take two contact symplectic lifts \(\mathcal M:=(M,V,v,S,\omega)\), \(\mathcal M':=(M',V',v',S',\omega')\), a complex constant $\lambda\ne 0$, and a $\lambda$-scale symplectic map \(S\xrightarrow{\varphi}S'\).
Proposition~\vref{prop:pullback.symplectic} says that the pullback is an immersion taking tangent spaces to symplectic subspaces.
By Proposition~\vref{prop:pullback.symplectic}, the pullback is a contact symplectic lift.
\begin{theorem}\label{thm:S.S.prime}
The difference class \([\varphi^*\mathcal M'-\lambda \mathcal M]\) vanishes 
if and only if there is a Reeb morphism
\[
\begin{tikzcd}
M\arrow[r,"\Phi"]\arrow[d,"\pi"']&M'\arrow[d,"\pi'"]\\
S\arrow[r,"\varphi"]&S'.
\end{tikzcd}
\]
If this occurs, \(\Phi\) is a \(\lambda\)-Reeb morphism.
It is unique up to composition with a constant time flow the Reeb vector on \(M\) over each component of \(S\).
\end{theorem}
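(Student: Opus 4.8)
The plan is to reduce the statement to Theorem~\ref{thm:how-many-lifts} and Corollary~\ref{cor:autos.id.on.S} by observing that $\varphi^*\mathcal M'$ and $\lambda\mathcal M$ are two contact symplectic lifts over the \emph{same} symplectic base. Since $\varphi$ is $\lambda$-scale symplectic, the pullback $\varphi^*\mathcal M'$ carries the holomorphic $2$-form $\varphi^*\omega'=\lambda\omega$, and $(\varphi^*\omega')^{\dimC S}=\lambda^{\dimC S}\omega^{\dimC S}\ne 0$; hence $\varphi$ is an immersion pulling the symplectic form back to a symplectic form, so by Proposition~\ref{prop:pullback.symplectic} $\varphi^*\mathcal M'$ is a contact symplectic lift over $(S,\lambda\omega)$. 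On the other hand $\lambda\mathcal M=(M,V,v/\lambda,S,\lambda\omega)$ is by definition a contact symplectic lift over the same base $(S,\lambda\omega)$, with connection form $\lambda\xi$. Thus the difference class $[\varphi^*\mathcal M'-\lambda\mathcal M]\in H^1(S,\C)$ is defined, and Theorem~\ref{thm:how-many-lifts} tells us it vanishes if and only if $\varphi^*\mathcal M'\sim\lambda\mathcal M$.

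The heart of the matter is to identify Reeb morphisms $M\to M'$ over $\varphi$ with equivalences $\lambda\mathcal M\sim\varphi^*\mathcal M'$, through the universal property of the pullback. Writing $M''=\varphi^*M'$ with its bundle morphism $\Phi''\colon M''\to M'$, $(s,p')\mapsto p'$, I would attach to a holomorphic map $\Phi\colon M\to M'$ satisfying $\pi'\circ\Phi=\varphi\circ\pi$ the map $\Psi\colon M\to M''$, $\Psi(p):=(\pi(p),\Phi(p))$, and conversely recover $\Phi=\Phi''\circ\Psi$ from any $\Psi$ over the identity of $S$; because $\Phi$ is $\C$-equivariant and restricts to a bijection on each fibre, $\Psi$ is a biholomorphism covering $\mathrm{id}_S$. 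The contact form of $\varphi^*\mathcal M'$ is the pullback $\xi''=(\Phi'')^{*}\xi'$, so $\Psi^{*}\xi''=\Phi^{*}\xi'$; hence the defining relation $\Phi^{*}\xi'=\lambda\xi$ of a $\lambda$-Reeb morphism is exactly the statement that $\Psi$ pulls the connection form $\xi''$ of $\varphi^*\mathcal M'$ back to the connection form $\lambda\xi$ of $\lambda\mathcal M$. Since $\Psi$ covers $\mathrm{id}_S$, matching the connection forms automatically matches both the contact distributions ($\Psi_*V=V''$) and the Reeb vector fields ($v/\lambda\mapsto v''$), i.e.\ makes $\Psi$ an equivalence of lifts. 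Keeping careful track of the factor $\lambda$, which rescales simultaneously the Reeb field and the symplectic form in $\lambda\mathcal M$, is the step I expect to be the main obstacle; the rest is formal.

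These two reductions together prove both implications at once: $[\varphi^*\mathcal M'-\lambda\mathcal M]=0$ iff $\varphi^*\mathcal M'\sim\lambda\mathcal M$ (Theorem~\ref{thm:how-many-lifts}) iff there is an equivalence $\Psi$, iff there is a Reeb morphism $\Phi=\Phi''\circ\Psi$ over $\varphi$. Moreover the construction forces $\Phi^{*}\xi'=\lambda\xi$, so any such $\Phi$ is automatically a $\lambda$-Reeb morphism and the scaling factor need not be postulated in advance.

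Finally, for uniqueness I would compare two $\lambda$-Reeb morphisms $\Phi_1,\Phi_2$ over the same $\varphi$. They correspond to equivalences $\Psi_1,\Psi_2\colon M\to M''$, and $\Psi_1^{-1}\circ\Psi_2\colon M\to M$ is then a $V$-automorphism of the lift $\lambda\mathcal M$ that fixes $S$ pointwise and preserves its Reeb field $v/\lambda$. By Corollary~\ref{cor:autos.id.on.S} such an automorphism is a constant-time flow of the Reeb vector field, the time being locally constant and hence constant on each connected component of $S$. Composing with $\Psi_1$ and applying $\Phi''$, we conclude that $\Phi_2$ and $\Phi_1$ differ by a constant-time flow of the Reeb vector field on $M$ over each component of $S$, which is the asserted uniqueness.
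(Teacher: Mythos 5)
Your proposal is correct and follows essentially the same route as the paper's proof: both reduce the statement to Theorem~\ref{thm:how-many-lifts} by comparing $\lambda\mathcal M$ with the pullback $\varphi^*\mathcal M'$ over the common base, and both obtain uniqueness from Corollary~\ref{cor:autos.id.on.S}. Your write-up is merely more explicit about the bijection between Reeb morphisms over $\varphi$ and equivalences over $\mathrm{id}_S$ (the paper simply composes the equivalence furnished by Theorem~\ref{thm:how-many-lifts} with the pullback bundle morphism, after normalizing $\lambda=1$), which is a presentational difference, not a different argument.
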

\begin{proof}
Replacing \(\mathcal M\) by \(\lambda\mathcal M\), we can assume that \(\lambda=1\).
Denote by \(M''\xrightarrow\Gamma M'\) the bundle morphism of \(\mathcal M''=\varphi^*\mathcal M\).
By Theorem~\vref{thm:how-many-lifts}, since \(0=[\mathcal M''-\mathcal M]\), we  have a \(1\)-contact symplectic lift morphism.
Denote the morphism by \(M\xrightarrow\Delta M''\) over \(S\), so $\Gamma\circ\Delta$ is a $1$-Reeb morphism.
Conversely, any Reeb morphism factors through $\mathcal M''$.
Uniqueness up to constant-time Reeb flows follows from Corollary~\vref{cor:autos.id.on.S}.
\end{proof}

\section{Hyperbolicity of contact symplectic lifts}%
\label{section:hyperbolicity.of.lifts}
 In this section we prove that contact hyperbolicity of the total space of a contact symplectic lift corresponds to Kobayashi hyperbolicity of the base.

Suppose that $S$ is a complex manifold.
Denote by $\kappa_S$ its Kobayashi pseudometric and by $k_S$ its Kobayashi pseudodistance \cite{AbateTaut,Kob}.

\begin{lemma}\label{V.hyp.to.hyp}
If a contact symplectic lift is contact-hyperbolic then its symplectic base manifold is Kobayashi hyperbolic.
\end{lemma}
\begin{proof}
Take a contact-hyperbolic contact symplectic lift \((M,V,v,S,\omega)\).
We need to prove that $S$ is Kobayashi hyperbolic.
Take distinct $\bar p,\bar q\in S$ and any $p\in\pi^{-1}(\bar p)$. 
Suppose that $k_S(\bar p,\bar q)=0$.

By definition of the Kobayashi pseudodistance, and its agreement with the chain pseudodistance \cite{Ro71}, there exist chains of holomorphic discs in $S$ joining $\bar p$ to $\bar q$ with arbitrarily small length.  
By Proposition~\vref{Prop:V-lifts}, each such disc lifts to a $V$-disc in $M$.  
The lifted chain starts at $p$ and ends at a point in $\pi^{-1}(\bar q)$.
The length of the chain in $S$ is the length of its lift in $M$.

By equation~\vref{Eq:quasi-Barth}, there are $V$-paths of arbitrarily small length from $p$ to points of \(\pi^{-1}\bar q\).
As $k_V$ induces the manifold topology on $M$, points of $\pi^{-1}(\bar q)$ accumulate at $p$ in the manifold topology.  
This contradicts continuity of $\pi$.
\end{proof}
The \emph{velocity} of a holomorphic disc $\D\xrightarrow\varphi M$ is $\varphi'(0)$.
\begin{lemma}\label{lemma:kV.eq.kS}
Take a contact symplectic lift \((M,V,v,S,\omega)\) with projection $\pi$.
For each point $p\in M$, if $\bar p:=\pi(p)$, the linear isomorphism
\[
v\in V_p\xrightarrow{d\pi_p}\bar v\in T_{\bar p}S
\]
is a bijection between the set of velocities of $V$-discs and the set of velocities of discs, preserving the pseudometrics:
\begin{equation}\label{Eq:kappaSV}
\kappa_S(\bar p;\bar v)=\kappa_V(p;v).
\end{equation}
Any rectifiable $V$-path in $M$ projects to a path in $S$ of the same length:
\[
\ell_S(\bar\gamma)=\ell_V(\gamma), \qquad \bar\gamma:=\pi\circ\gamma.
\]
Projection does not expand the pseudodistances:
\[
k_S(\bar p,\bar q)\le k_V(p,q),
\]
or the chain pseudodistances:
\[
\tilde k_S(\bar p,\bar q)\le \tilde k_V(p,q),
\]
for any points \(p,q\in M\) and their images \(\bar p,\bar q\in S\).
We again see (without invoking chains of discs) that if $(M,V)$ is contact-hyperbolic then $S$ is Kobayashi hyperbolic.
\end{lemma}
\begin{proof}
As $V$ and $v$ are transverse, $T_p M=V_p\oplus\C v(p)$, so $d\pi_p\colon V_p\to T_p S$ is a complex linear isomorphism.

Take any $V$-disc $\D\xrightarrow{\varphi}M$.
This $V$-disc projects by $\pi$ to a holomorphic disc \(\D\xrightarrow{\bar\varphi}S\) by \(\bar\varphi:=\pi\circ\varphi\).
By the chain rule, $d\pi$ maps the velocity of \(\varphi\) to the velocity of \(\bar\varphi\):
\[
\bar\varphi'(0)=(\pi\circ\varphi)'(0)=d\pi_p(\varphi'(0)).
\]

Conversely, by Proposition~\vref{Prop:V-lifts} we can lift any disc \(\bar\varphi\) in $S$, say with \(\bar\varphi(0)=\bar p\), to a $V$-disc \(\varphi\) in \(M\) with \(\varphi(0)=p\), so that \(\bar\varphi:=\pi\circ\varphi\).
Again, by the chain rule,
\[
\bar\varphi'(0)=(\pi\circ\varphi)'(0)=d\pi_p(\varphi'(0)).
\]
So the velocity of the lift maps by the isomorphism \(d\pi_p\) to the velocity of the original disc.
By uniqueness of the lift, this inverts the projection.
So if we fix the point $p$, $d\pi_p$ is a bijection of velocities.

The pseudometrics agree because they are defined by the velocities, as their Minkowski functionals:
\[
\kappa_S(\bar p;\bar v)=\kappa_V(p;v).
\]

Paths in $M$, tangent to $V$, from $p$ to $q$ project to paths from $\bar p$ to $\bar q$.
Take such a path $\gamma(t)$, $t_0\le t\le t_1$.
Let $\bar\gamma:=\pi\circ\gamma$:
\begin{align*}
\ell_S(\bar\gamma)
&=
\int_{t_0}^{t_1}
\kappa_S(\bar\gamma(t);\bar\gamma'(t)),
\\
&=
\int_{t_0}^{t_1}
\kappa_S((\pi\circ\gamma)(t);(\pi\circ\gamma)'(t)),
\\
&=
\int_{t_0}^{t_1}
\kappa_S((\pi\circ\gamma)(t);d\pi_{\gamma(t)}\gamma'(t)),
\\
&=
\int_{t_0}^{t_1}
\kappa_V(\gamma(t);\gamma'(t)),
\\
&=
\ell_V(\gamma).
\end{align*}
Every rectifiable path in $S$ from $\bar p$ to $\bar q$ lifts up to $M$ to a unique $V$-tangent path starting at $p$, but the lifted path might not reach $q$.
The infimum path length in $S$ is infimum over a possibly larger set of path lengths:
\[
k_S(\bar p,\bar q)\leq k_V(p,q).
\]
Similarly for the chain pseudodistances.

From here the proof is as in Lemma~\vref{V.hyp.to.hyp}: if $(M,V)$ is contact-hyperbolic and $k_S(\bar p,\bar q)=0$ then there are arbitrarily $V$-short paths in $M$ between $p$ and $\pi^{-1}\bar q$.
By Proposition~\vref{prop:same-top}, $k_V$ induces the manifold topology on $M$.
So every open set about $p$ contains points of $\pi^{-1}\bar q$.
But $\pi(p)=\bar p\ne \bar q$, contradicting the continuity of $\pi$.
\end{proof}
An elementary result on discs in Kobayashi hyperbolic manifolds, controlling how rapidly discs stretch distances as measured in local coordinates:
\begin{lemma}\label{lemma:control.discs}
Suppose that $S$ is a Kobayashi hyperbolic complex manifold.
Every point $s_0\in S$ lies in an open set $U$, identified in some holomorphic chart with the unit ball $\B$, so that $s_0$ becomes the origin.
Pick any \(r,R\) with \(0<r<R<1\).
Let $\B_r:=\{(z,w)\in \C^N\times \C^N: \|(z,w)\|<r\}$. 
For some $\rho>0$, every holomorphic disc taking the origin to a point of $\B_r$ takes the disk of radius $\rho$ to $\B_R$.
\end{lemma}
\begin{proof}
If not, there is a sequence of discs $\{\varphi_n\}$ with $\varphi_n(0)\in\B_r$ and a sequence $\{\zeta_n\}\subset\D$ converging to $0$ such that $\varphi_n(\zeta_n)\in S\setminus\bar\B_R$. Thus,
\[
k_S(\bar\B_r, S\setminus\bar\B_R)\leq k_S(\varphi_n(0), \varphi_n(\zeta_n))\leq k_\D(0,\zeta_n)\to 0.
\]
But $S$ is Kobayashi hyperbolic, a contradiction.
\end{proof}

\begin{theorem}\label{Thm:Reeb-principal-bdle}
A contact symplectic lift \((M,V,v,S,\omega)\) is contact-hyperbolic if and only if $S$ is Kobayashi hyperbolic.
\end{theorem}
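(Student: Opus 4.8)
The plan is to exploit the fact that, along the fibres of $\pi\colon M\to S$, the $V$-geometry of $M$ and the Kobayashi geometry of $S$ coincide, and that the only new phenomenon is the \emph{vertical} (Reeb) displacement produced by the curvature $d\xi=-\pi^*\omega$. I would first record the basic dictionary. Using Proposition~\ref{Prop:V-lifts} (lifting holomorphic discs to $V$-Legendrian discs) together with the projection of $V$-discs, for every $p\in M$, $s:=\pi(p)$ and $\tilde X\in V_p$ one has the exact identity
\[
\kappa_V(p;\tilde X)=\kappa_S\bigl(s;d\pi_p(\tilde X)\bigr),
\]
since $d\pi_p$ restricts to an isomorphism $V_p\to T_sS$ and discs lift and project in both directions. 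Hence for any $V$-path $\gamma$ its projection $\bar\gamma:=\pi\circ\gamma$ satisfies $\ell_V(\gamma)=\ell_S(\bar\gamma)$, and $\pi$ is non-expansive: $k_S(\pi p,\pi q)\le k_V(p,q)$. Moreover paths lift \emph{horizontally}: given $\bar\gamma$ in $S$ and a starting point in the fibre, the condition $\xi(\gamma')=0$ determines a unique $V$-lift $\gamma$ (the $\C$-action is complete), again with $\ell_V(\gamma)=\ell_S(\bar\gamma)$.

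The implication ``$M$ is $V$-hyperbolic $\Rightarrow S$ is Kobayashi hyperbolic'' is then quick. If $S$ were not hyperbolic, pick $s\ne s'$ with $k_S(s,s')=0$ and paths $\bar\gamma_n$ from $s$ to $s'$ with $\ell_S(\bar\gamma_n)\to 0$; horizontally lifting from a fixed $p_0\in\pi^{-1}(s)$ gives $V$-paths $\gamma_n$ from $p_0$ to points $q_n\in\pi^{-1}(s')$ with $k_V(p_0,q_n)\le\ell_V(\gamma_n)=\ell_S(\bar\gamma_n)\to0$. Since $M$ is $V$-hyperbolic, $k_V$ induces the manifold topology (Proposition~\ref{prop:same-top}), so $q_n\to p_0$, whence $s'=\pi(q_n)\to\pi(p_0)=s$, a contradiction.

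For the converse, fix $p\ne q$. If $\pi(p)\ne\pi(q)$, non-expansiveness gives at once $k_V(p,q)\ge k_S(\pi p,\pi q)>0$. The essential case is $\pi(p)=\pi(q)=:s$ with $p\ne q$ in the same fibre. Here I would localise: choose a Darboux chart $U\ni s$ as in Lemma~\ref{Lem:contact-sympl-coordinates}, centred so that $s=\{z=w=0\}$, with $\omega=d\nu$, $\nu=\sum_j z_j\,dw_j$, and contact coordinates $(z,w,y)$ on $M_0=\pi^{-1}(U)$ in which $\xi=dy-\nu$ and $v=\partial_y$; write $p=(0,0,y_0)$, $q=(0,0,y_1)$ with $y_0\ne y_1$. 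Along any $V$-path $\gamma$ the relation $\xi(\gamma')=0$ reads $\dot y=\nu(\bar\gamma')$, so the vertical displacement is exactly the ``enclosed symplectic area''
\[
y_1-y_0=\int_{\bar\gamma}\nu=\int_{\bar\gamma}\sum_{j=1}^N z_j\,dw_j.
\]
If $\ell_V(\gamma)<\varepsilon$, then $\ell_S(\bar\gamma)=\ell_V(\gamma)<\varepsilon$; since $S$ is hyperbolic, $k_S$ induces the manifold topology, so for $\varepsilon$ small the loop $\bar\gamma$ stays inside $U$, in fact inside $\{|z|\le\delta(\varepsilon)\}$ with $\delta(\varepsilon)\to0$. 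Estimating the line integral,
\[
|y_1-y_0|\le \Bigl(\sup_{\bar\gamma}|z|\Bigr)\,\ell_{\mathrm{eucl}}(\bar\gamma)\le \delta(\varepsilon)\,\ell_{\mathrm{eucl}}(\bar\gamma),
\]
and it remains only to control the Euclidean length of $\bar\gamma$ by its Kobayashi length.

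The main obstacle is precisely this last estimate, which replaces the sub-Riemannian ball–box theorem alluded to in the introduction. I would invoke the local lower bound for the Kobayashi--Royden metric of a hyperbolic manifold: after shrinking $U$ there is $c>0$ with $\kappa_S(x;X)\ge c|X|$ for all $x\in U$ (equivalently $k_S(x,x')\ge c|x-x'|$ on $U$), which follows from hyperbolicity of $S$ by a Brody/Zalcman rescaling argument, a non-constant entire curve $\C\to S$ being impossible as it would force $k_S$ to degenerate (see \cite{Kob}). Granting it, summing $k_S\ge c|{\cdot}|$ over partitions yields $\ell_{\mathrm{eucl}}(\bar\gamma)\le c^{-1}\ell_S(\bar\gamma)<c^{-1}\varepsilon$, so $|y_1-y_0|\le c^{-1}\delta(\varepsilon)\,\varepsilon$. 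Since the right-hand side tends to $0$ while $|y_1-y_0|>0$ is fixed, there is $\varepsilon_0>0$ admitting no $V$-path from $p$ to $q$ of length $<\varepsilon_0$; hence $k_V(p,q)\ge\varepsilon_0>0$ and $M$ is $V$-hyperbolic. I expect the delicate points to be (i) extracting the local lower bound on $\kappa_S$ from hyperbolicity, the step genuinely using that $S$ is \emph{globally} hyperbolic (a sublinear modulus of continuity would not suffice, since it fails to bound Euclidean length), and (ii) the bookkeeping ensuring that sufficiently short $V$-paths remain over a single Darboux chart, for which the shrinking of $k_S$-balls is the tool.
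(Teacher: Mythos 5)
Your proposal is correct in its overall architecture, and in the hard direction it takes a genuinely different route from the paper's. The shared ingredients are the projection inequality \eqref{Eq:kappaSV}, disc lifting via Proposition~\ref{Prop:V-lifts} (which indeed upgrades the inequality to the exact identity $\kappa_V(p;\tilde X)=\kappa_S(\pi(p);d\pi_p\tilde X)$, since $d\pi_p|_{V_p}$ is an isomorphism), and Proposition~\ref{prop:same-top} plus continuity of $\pi$ for the implication ``$M$ $V$-hyperbolic $\Rightarrow$ $S$ hyperbolic''. The divergence is in the converse, on a single fibre. The paper stays upstairs: using the contact ODE and Schwarz--Pick it shows that $V$-discs based near the fibre have bounded $y$-drift, hence land in a bounded domain $W_T=\B\times\{|y|<T\}\subset\C^{2N+1}$; this gives $\kappa_V\gtrsim\kappa_{W_T}$ on a compact set, and the contradiction comes from $k_{W_T}\bigl((0,0,0),\{|y|=|y_0|\}\bigr)>0$. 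You instead work downstairs: the contact condition along the connecting path gives the identity $y_1-y_0=\int_{\bar\gamma}\nu$ over the projected \emph{loop}, bounded by $\sup_{\bar\gamma}\|z\|\cdot\ell_{\mathrm{eucl}}(\bar\gamma)$, and both factors are $O(\varepsilon)$ once one knows $\kappa_S\ge c\|\cdot\|$ near $\pi(p)$. Your version isolates the curvature mechanism (vertical displacement equals the line integral of the symplectic potential) more transparently and shortens the bookkeeping; the paper's version needs only its own localization trick and hyperbolicity of bounded domains. Your treatment of the first direction via horizontal lifts of paths (rather than lifted chains, as in the paper) is also fine, though note it quietly invokes Royden's theorem on $S$ to convert $k_S(s,s')=0$ into short smooth paths.

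The one step you must repair is the justification of the key estimate $\kappa_S\ge c\|\cdot\|$ on a neighborhood of $\pi(p)$. A Brody/Zalcman rescaling argument does not prove it here: $S$ is never compact (as $\omega$ is $C^\infty$-exact, $\int_S \omega^N\wedge\bar\omega^N$ would vanish by Stokes), rescaled discs need not subconverge in a noncompact target, and for noncompact manifolds Brody hyperbolicity is strictly weaker than Kobayashi hyperbolicity, so ``no entire curves'' is not the right obstruction. The statement you need is nonetheless true and standard: it is Royden's localization criterion. Since $k_S$ induces the manifold topology on a hyperbolic manifold \cite{Bar72}, every holomorphic disc centered in a small $k_S$-ball about $\pi(p)$ maps a fixed subdisc $\rho\D$ into a bounded coordinate chart $U$, whence $\kappa_S\ge\rho\,\kappa_U\ge c\|\cdot\|$ there; see \cite{Ro71} and \cite{Kob}. (This is exactly the localization the paper runs upstairs for $\kappa_V$.) With that substitution your proof closes, and in fact this bound alone forces short projected curves to remain in the chart and gives $\sup_{\bar\gamma}\|z\|\le\varepsilon/c$, so your separate appeals to Barth and to Royden in the confinement step become unnecessary.
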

\begin{proof}%
By lemma~\vref{V.hyp.to.hyp}, if \((M,V)\) is contact-hyperbolic then $S$ is Kobayashi hyperbolic.
We need only prove the other direction, so assume that $S$ is Kobayashi hyperbolic.

Fix two points $p,q\in M$.
Denote their projections by $\bar p,\bar q\in S$.
We need precisely to prove that $k_V(p,q)>0$, in other words to separate $p$ from $q$.
So assume that $k_V(p,q)=0$.
Our plan:
\begin{itemize}
\item 
Prove that $\bar p=\bar q\in S$.
\item
Localise the problem in $S$ near \(\bar p\) by either of two arguments: 
\begin{enumerate}
\item
Geometrically: relate path lengths in $M$ to those in $S$, using lemma~\vref{lemma:kV.eq.kS}. Relate path lengths in $S$ to Euclidean path lengths in coordinates on $S$, using continuity of the Kobayashi metric.
Estimate Euclidean path lengths in coordinates on $M$ from those on $S$ using quadrature.
\item
Complex analytically: use lemma~\vref{lemma:control.discs} to ensure that all $V$-discs mapping the origin to $p$ project to $S$ to stay near $\bar p$ on a fixed subdisc.
Use a Schwarz--Pick estimate on the derivatives of the disc to ensure that the pseudodistance on $M$ is approximated to a bounded error factor by the pseudodistance on an open subset of $M$, with small projection to $S$.
Controlling the velocities of $V$-discs, we control the pseudometric, hence $V$-lengths, hence the pseudodistance. So short enough paths between $p$ and $q$ stay in a Kobayashi hyperbolic open set in $M$.
\end{enumerate}
\end{itemize}
\begin{ProofSteps}
\ProofStep{Prove that $\bar p=\bar q$}
By lemma~\vref{lemma:kV.eq.kS}, \(k_S(\bar p,\bar q)\le k_V(p,q)\) for any points \(p,q\in M\) and their images \(\bar p,\bar q\in S\).
As $S$ is Kobayashi hyperbolic, \(\bar p=\bar q\).
Connect $p$ and $q$ by $V$-paths of arbitrarily small $V$-length.
These project to paths in $S$ of equal length, by lemma~\vref{lemma:kV.eq.kS}.
\ProofStep{Localise on the base $S$}
Pick an open set $U$ around $\bar p$.
The $k_S$-balls in $S$ induce the manifold topology on $S$.
So short enough paths in $S$ starting at $\bar p$ stay inside $U$.
Projection preserves path length by lemma~\vref{lemma:kV.eq.kS}.
So all $V$-paths of sufficiently small $V$-length stay in the preimage of $U$.
\ProofStep{Introduce coordinates}
Pick $U$ to be a coordinate ball \(U=\B_\varepsilon\) centered at \(\bar p\), in contact symplectic coordinates on \(S\) as in Lemma~\ref{Lem:contact-sympl-coordinates}.
(We might not be able to choose \(U\) to be the unit ball, without changing the symplectic coordinates, since \(\int_U \omega\wedge\bar\omega\) is an invariant.)
\ProofStep{Control vertical motion geometrically}
We have localised on \(S\): our short paths lie in \(U\subseteq S\).
Next we need to control the ``vertical drift'' in fibers of $M\to S$.
By Kobayashi hyperbolicity of $S$, we can bound $k_S$ from below, throughout any closed ball $\bar \B'\subseteq U$, by the Euclidean metric of the coordinates: $k_S(x;v)\ge\alpha|v|$, $x\in U$ \cite[p.~133, Theorem~2]{Ro71}. 
Replace \(U\) by \(\B'\), so this bound holds in \(U\).

Any short enough path $\gamma(t)$, \(t_0\le t\le t_1\) from $p$ to $q$ stays in $U\times\C$.
So we write it in coordinates $\gamma(t)=(z(t),w(t),y(t))$.
Its projection $\bar\gamma(t):=\pi(\gamma(t))\in S$ is given by $\bar\gamma(t)=(z(t),w(t))\in U=\B$, so \(|w(t)|<\varepsilon\) and \(\bar\gamma\) has Euclidean length at most \(\ell_S(\bar\gamma)/\alpha\).

By lemma~\vref{lemma:kV.eq.kS},
\begin{align*}
\ell_V(\gamma)
&=\ell_S(\bar\gamma),
\\
&=\int_{t_0}^{t_1} \kappa_S(z(t),w(t);z'(t),w'(t)),
\\
&\ge\alpha\int_{t_0}^{t_1}|(z',w')|,
\\
&\ge\alpha\int|z'|.
\end{align*}
So \(|w|<\varepsilon\) and \(\int|z'|\le \ell_V(\gamma)/\alpha\).
Because $\gamma$ is $V$-tangent,
\[
y'=\sum_j w_jz'_j.
\]
Bound the Euclidean distance in coordinates between the end points:
\begin{align*}
|y(t_1)-y(t_0)|
&=
\left|\int_{t_0}^{t_1} \sum_j w_jz'_j\right|,
\\
&\le
\operatorname{max}|w|\int_{t_0}^{t_1}|z'|,
\\
&<
\frac{\varepsilon\ell_V(\gamma)}{\alpha}.
\end{align*}
Take \(\ell_V(\gamma)\to 0\): \(y(t_1)=yt(t_0)\) so \(p=q\).
So $(M,V)$ is contact-hyperbolic.
\ProofStep{Localise the vertical motion complex-analytically}%
We provide an alternate proof of localisation in $M$, using tools of complex analysis in place of the geometry above.
To do this, we need to control discs more precisely in $S$ first 
and then control their lifts in $M$.
Let $\epsilon>0$ and let $\gamma_\epsilon$ be a path from \(p\) to \(q\) 
such that 
\begin{equation}\label{eq-lungeps}
\ell_V(\gamma_\epsilon)<k_V(p,q)+\epsilon=\epsilon.
\end{equation}
%
\ProofStep{Controlling discs in $S$}
By lemma~\vref{lemma:control.discs}, there is a ball \(\B\) in $U$, centered at the origin, and constants $r,\rho>0$, so that every holomorphic disc whose center maps to some smaller ball $\B_r$ maps $\D_{\rho}$ into $\B_{2r}$.
This estimate gives precise control on how discs in $S$ stretch, near the origin of each disc.
We can assume that $U=\B$.
\ProofStep{Bounding the vertical component of $V$-discs}
Let $Z=\B\times \C$. 
We denote any $V$-disc as $\varphi(\zeta)=(z(\zeta),w(\zeta),y(\zeta))$.
We claim that, for any $r\in(r,1)$, there is a constant $T>|y_0|$, so that every $V$-disc $\varphi$ with $\pi(\varphi(0))\in\B_r$ and with $|y(0)|\le|y_0|$ satisfies
\[
|y(r_1\zeta)|<T \text{ for all } \zeta\in\D.
\]
To see this, as $\varphi$ is a $V$-disc,
\[
y'(\zeta)=\sum_{j=1}^N w_j(\zeta)z_j'(\zeta).
\]
Integrating
\[
y(\zeta)=\sum_{j=1}^N\int_0^{\zeta}w_j(\tau)z_j'(\tau)d\tau.
\]
\ProofStep{Approximating $V$-discs by $V$-discs with bounded projection}
Let $Z=\B\times \C$. 
For every $p\in M$ such that $\bar p=\pi(p)\in \B_r$ and for every $X\in V_p$, take a $V$-disc $\varphi$ such that $\varphi(0)=p$ and $\varphi'(0)=\lambda X$.
Let $\tilde\varphi(\zeta):=\varphi(\rho\zeta)$ for $\zeta\in \D$, a $V$-disc whose projection to $S$ is contained in $\bar\B_{2r}$, so $\tilde\varphi$ is a $V$-disc in $Z$:
\begin{equation}\label{eq:firstlocZ}
\kappa_{Z,V}(p;X)\leq \frac{1}{\rho}\kappa_V(p;X).
\end{equation}
Here $\kappa_{Z,V}$ is the pseudometric of $\left.V\right|_Z$.
\ProofStep{Controlling vertical motion of $V$-discs with bounded projection}
On $Z$, $V$ is the standard contact structure $\xi_0:=dy-\sum_{j=1}^N w_j dz_j$. We can assume that $p=(0,0,0)$ and $q=(0,0,y_0)$ for some $y_0\in\C\setminus\{0\}$. 

Denote each $V$-disc $\varphi$ in $M$ by
\[
\varphi(\zeta)=(z_1(\zeta),  \ldots, z_N(\zeta), w_1(\zeta),\ldots, w_N(\zeta), y(\zeta))\in \C^N\times \C^N\times \C.
\]
Fix $r_1\in (r,1)$. 
Consider all $V$-discs \(\varphi\) so that $\pi(\varphi(0))\in \B_r$ and $|y(0)|\leq |y_0|$.
We claim that there is $T>|y_0|$ so that all these $V$-discs satisfy
\begin{equation}\label{eq:stima-vyT}
|y(r_1\zeta)|< T \quad \hbox{for all $\zeta\in \D$}.
\end{equation}
The contact condition $i_{\varphi'}\xi=0$ is precisely
\[
y'(\zeta)=\sum_{j=1}^N w_j(\zeta)z_j'(\zeta).
\]
This integrates to
\begin{equation}\label{eq:vary-ball}
y(\zeta)=y(0)+\sum_{j=1}^N \int_0^\zeta w_j(\tau)z_j'(\tau)d\tau.
\end{equation}
Each point $\pi(\varphi(\zeta))=(z(\zeta),w(\zeta))$ lies in $\B_{2r}$, for $|\zeta|<r$.
As $\pi(\varphi(\D))\subset\B$, $|y(0)|\leq |y_0|$ and $|z_j(\tau)|<2r$ for $|\tau|<r$, $j=1,\dots,N$ , for $j=1,\ldots, N$. Also, for $j\in \{1,\ldots, N\}$, $z_j$ is a holomorphic self-map of the unit disc, hence for all $\zeta\in \D$ such that $|\zeta|\leq r_1$
\[
k_\D(z_j(\zeta),z_j(0))\leq k_\D(0, \zeta)\leq k_\D(0,r_1).
\]
Since $z_j(0)\in \{\zeta\in\D: |\zeta|\leq r\}$, and $k_\D$ is complete, there exists $c_0\in (0,1)$ such that $|z_j(\zeta)|\leq c_0$ for all $\zeta\in \D$ such that $|\zeta|\leq r_1$.

By the classical Schwarz--Pick lemma (see, \eg, \cite[Corollary~1.1.4]{AbateTaut}), for all $\zeta\in\D$ such that $|\zeta|\leq r_1$,
\[
|z_j'(\zeta)|\leq \frac{1-|z_j(\zeta)|^2}{1-|\zeta|^2}\leq \frac{1-c_0}{1-r_1}.
\]
By \eqref{eq:vary-ball}, for all $\zeta\in \D$ such that $|\zeta|<r_1$,
\[
|y(\zeta)|\leq |y_0|+N r_1 \frac{1-c_0}{1-r_1}.
\]
Equation \eqref{eq:stima-vyT} follows by taking any $T> |y_0|+N r_1 \frac{1-c_0}{1-r_1}$. 
\ProofStep{Comparing the $V$-metric with the Kobayashi metric on a bounded domain}
Let $W:=\B\times\{y\in\C: |y|<T\}$, a bounded domain in $\C^{2N+1}$, hence Kobayashi hyperbolic; denote it $\kappa_W, k_W$ its Kobayashi metric and distance.
Let 
\[
K:=\{(z,w,y)\in W: \|(z,w)\|<r_1, |y|\leq y_0\}.
\]
Fix $p\in K$ and $X\in V_p$. 
Suppose that $\varphi$ is any $V$-disc in $M$ such that $\varphi(0)=p$ and $\varphi'(0)=\lambda X$.
Let $\tilde\varphi(\zeta)=\varphi(r_1\zeta)$ for $\zeta\in \D$. 
By~\eqref{eq:vary-ball}, $\tilde\varphi(\D)\subset W$. Also, $\tilde\varphi(0)=p$ and $\tilde\varphi'(0)=r_1\lambda X$. By \eqref{eq:firstlocZ}, for every $p\in K$ and $X\in V_p$,
\[
\kappa_W(p;X)\leq \frac{1}{r_1} \kappa_V(p;X)\leq\frac{1}{\rho r_1} \kappa_V(
p;X).
\]
This implies that if $\gamma\colon [0,1]\to K$ is any piecewise smooth curve almost everywhere tangent to $V$, then 
\begin{equation}\label{eq:estima-sotto}
\ell_V(\gamma)\geq \rho r_1 \ell_W(\gamma):=\rho r_1\int_0^1\kappa_W(\gamma(t);\gamma'(t))dt.
\end{equation}
Choose $\epsilon_0>0$ so small that $\pi(\gamma_\epsilon([0,1]))\in \B_r$ for all $\epsilon<\epsilon_0$. The starting and ending points are $\gamma_\epsilon(0)=p=(0,0,0)$ and $\gamma_\epsilon(1)=(0,0,y_0)$. 
Let
\[
t_0:=\sup \{t\in [0,1]: |y_\epsilon(s)|<|y_0|, \forall s\in [0,t]\}.
\] 
Here, $y_\epsilon$ denotes the component of $\gamma_\epsilon$ in the $y$-coordinate.
Then $t_0\in (0,1]$ and $|y_\epsilon(t_0)|=|y_0|$. Therefore, $\gamma_\epsilon([0,t_0])\subset K$.
Let $L:=\{(z,w,y)\in W: |y|=|y_0|\}$.
By \eqref{eq:estima-sotto},
\begin{equation*}\begin{split}
0<k_W((0,0,0), L)& \leq k_W(\gamma_\epsilon(0), \gamma_\epsilon(t_0))\leq \int_0^{t_0}\kappa_W(\gamma_\epsilon(s); \gamma_\epsilon'(s))ds\\ &\leq \int_0^{1}\kappa_W(\gamma_\epsilon(s); \gamma_\epsilon'(s))ds=\ell_W(\gamma_\epsilon)\leq \frac{1}{\rho r_1} \ell_V(\gamma_\epsilon)<\epsilon.
\end{split}
\end{equation*}
As $\epsilon$ is arbitrary, we reach a contradiction. Therefore, $k_V(p,q)>0$ and  $(M,V)$ is contact-hyperbolic.  
\end{ProofSteps}
\end{proof}

Theorem~\vref{thm:S.S.prime} characterises when a scale symplectic map lifts to a contact morphism.
Conversely, contact hyperbolicity of the target contact manifold forces every contact morphism to respect the Reeb vector field:
\begin{corollary}\label{cor:lambda}
Suppose that \((M,V,v)\) and \((M',V',v')\) are Reeb manifolds.
Suppose that \((M',V')\) is contact-hyperbolic and that \(M\) is connected.
Every contact morphism \(M\to M'\) is a Reeb morphism.
\end{corollary}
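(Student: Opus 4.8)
The plan is to exploit the hyperbolicity of the target to force $\Phi$ to respect the Reeb foliation, and then to use the contact condition to show that the resulting scaling function is a nonzero constant. Since $(M',V')$ is a hyperbolic Reeb manifold, Proposition~\ref{prop:3bound} gives that $(M',V',v')$ is a \emph{proper} Reeb manifold, so by Lemma~\ref{lemma:Reeb.to.csl} it is the Reeb manifold of a contact symplectic lift $(M',V',v',S',\omega')$ with projection $\pi'\colon M'\to S'$; by Theorem~\ref{Thm:Reeb-principal-bdle} the base $S'$ is Kobayashi hyperbolic, hence Brody hyperbolic. Write $\xi,\xi'$ for the contact forms normalised by $\xi(v)=1$ and $\xi'(v')=1$. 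As $\Phi$ is a contact morphism, $\Phi^*\xi'$ annihilates $V=\ker\xi$, so there is a unique holomorphic function $g$ on $M$ with $\Phi^*\xi'=g\,\xi$.

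Next I would show that $\Phi$ carries each Reeb orbit of $M$ into a Reeb orbit of $M'$. Because $v$ is $\C$-complete and flows freely, $t\mapsto e^{tv}p$ is a nonconstant entire curve $\C\to M$, so $t\mapsto \pi'(\Phi(e^{tv}p))$ is an entire curve $\C\to S'$ and is therefore constant, $S'$ being Brody hyperbolic. Hence $\Phi(e^{tv}p)$ remains in the $\pi'$-fibre through $\Phi(p)$, which is exactly the $v'$-orbit through $\Phi(p)$. Differentiating at $t=0$ shows $d\Phi_p(v(p))\in\C\,v'(\Phi(p))$, and pairing with $\xi'$ gives $\xi'(d\Phi_p(v(p)))=(\Phi^*\xi')(v(p))=g(p)$, whence $d\Phi_p(v(p))=g(p)\,v'(\Phi(p))$ for every $p$; that is, $\Phi_*v=g\,(v'\circ\Phi)$.

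It then remains to check that $g$ is a nonzero constant. From $\Phi_*v=g\,(v'\circ\Phi)$ and $\iota_{v'}d\xi'=0$ one gets $\iota_v\Phi^*d\xi'=0$, while $\Phi^*d\xi'=d(g\xi)=dg\wedge\xi+g\,d\xi$; using $\iota_v d\xi=0$ and $\xi(v)=1$ the contraction yields $\iota_v\Phi^*d\xi'=dg(v)\,\xi-dg$, so $dg=dg(v)\,\xi$. Setting $h:=dg(v)$ and applying $d$ gives $dh\wedge\xi+h\,d\xi=0$; wedging this with $\xi\wedge(d\xi)^{N-1}$ annihilates the first term (two factors of $\xi$) and leaves $h\,\xi\wedge(d\xi)^N=0$, so the contact condition $\xi\wedge(d\xi)^N\neq0$ forces $h\equiv0$. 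Thus $dg=0$ and, $M$ being connected, $g\equiv\lambda$ for a constant $\lambda$; if $\lambda=0$ then $d\Phi_p(v(p))=\lambda\,v'(\Phi(p))=0$, contradicting that $\Phi$ is an immersion, so $\lambda\neq0$.

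With $g\equiv\lambda$ constant, $\frac{d}{dt}\Phi(e^{tv}p)=\lambda\,v'(\Phi(e^{tv}p))$ integrates to $\Phi(e^{tv}p)=e^{\lambda t\,v'}\Phi(p)$, which is the equivariance of $\Phi$ for the two $\C$-actions (up to the time-rescaling by $\lambda$); combined with $\Phi^*\xi'=\lambda\xi$ this exhibits $\Phi$ as a Reeb morphism. The main obstacle is the second step: a priori $d\Phi(v)$ could acquire a nonzero component in $V'$, and it is precisely the Brody hyperbolicity of $S'$---equivalently the hyperbolicity of $(M',V')$---that eliminates this component and delivers the clean relation $\Phi_*v=g\,(v'\circ\Phi)$, which is what turns the ensuing contraction computation into the desired rigidity.
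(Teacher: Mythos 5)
Your proof is correct, and its first half is the paper's: both arguments pass to the contact symplectic lift of the \emph{target} (Proposition~\ref{prop:3bound} plus Lemma~\ref{lemma:Reeb.to.csl}), use Theorem~\ref{Thm:Reeb-principal-bdle} to get Kobayashi hyperbolicity of $S'$, and conclude that $\Phi$ sends Reeb orbits of $M$ (nonconstant entire curves) into fibres of $\pi'$, i.e.\ into Reeb orbits of $M'$, so that $\Phi_*v=g\,(v'\circ\Phi)$ for a holomorphic function $g$. You diverge from the paper afterwards, in two ways, both to your advantage. First, you never touch the source side: the paper additionally argues that $M$ itself is $V$-hyperbolic (since $\Phi$ is distance-nonincreasing) and hence proper, and constructs its lift $(M,V,v,S,\omega)$, but as your argument shows this is superfluous. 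Second, the constancy of the factor is proved differently: the paper introduces the rescaled field $w:=v/f$, shows its flow intertwines with that of $v'$ so that $w$ is again a Reeb vector field, and applies the Cartan formula to it; you instead identify the factor with the coefficient $g$ in $\Phi^*\xi'=g\xi$ and kill $dg$ directly, via $\iota_v\Phi^*d\xi'=0\Rightarrow dg=dg(v)\,\xi$, then $d^2=0$ together with $\xi\wedge(d\xi)^N\neq 0$ forcing $dg(v)=0$. Your computation is self-contained and, incidentally, cleaner than the paper's, whose displayed identity $0=\mathcal{L}_w\xi=f(\iota_v d\xi)+df$ is garbled (the form normalized for $w$ is $f\xi$, not $\xi$), though the intended conclusion $df=0$ is the same. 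One last remark in your favour: your conclusion $\Phi_*v=\lambda\,(v'\circ\Phi)$ together with $\Phi^*\xi'=\lambda\xi$ is the internally consistent form of the paper's definition of a $\lambda$-Reeb morphism (the definition as printed, $\Phi_*v=v'/\lambda$ with $\Phi^*\xi'=\lambda\xi$, is contradictory upon evaluating $\Phi^*\xi'$ on $v$, and the paper's own proof arrives at the same normalization you do).
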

\begin{proof}
As \((M',V')\) is contact-hyperbolic, \(v'\) has proper flow, \ie \((M',v',V')\) is a proper Reeb manifold.
Take a contact morphism \(M\xrightarrow{\Phi}M'\).
Since the map \(\Phi\) does not increase pseudodistances, \((M,V)\) is also contact-hyperbolic.
Hence \((M,V,v)\) is also a proper Reeb manifold.
So both Reeb manifolds arise from contact symplectic lifts \((M,V,v,S,\omega)\), \((M',V',v',S',\omega')\), unique up to isomorphism.
By Theorem~\vref{Thm:Reeb-principal-bdle}, \(S\) and \(S'\) are Kobayashi hyperbolic.

Any entire curve \(\C\to M\) projects \(\C\to M\to S\) to a constant, since \(S\) is Kobayashi hyperbolic, so is tangent to \(v\).
The same holds in \(M'\).
Since \(\Phi\) maps entire curves to entire curves, it carries Reeb flow lines to Reeb flow lines.
Since \(\Phi\) is an immersion, \(d\Phi_pv(p) =f(p)v'(p')\) for a unique holomorphic function \(M\xrightarrow{f}\C^*\).

Let \(w:=v/f\).
Then $\Phi_*w=v'$.
For any complex time $t$, the time $t$ flow of \(w\) moves \(V\) to a contact structure $e^{tw}V$ on $M$ which, intertwining with the flow of $v'$, has $\Phi_* e^{tw}V=e^tv'\Phi_*V\subseteq e^{tv'}V'=V'$.
But \(\Phi_*^{-1}V'=V\) so \(e^{tw}_*V=V\).
So \(w\) is also a Reeb vector field for \(V\).
By lemma~\vref{lemma:rescaling.Reeb}, \(f\) is constant.
\end{proof}

Theorem~\ref{Thm:Reeb-principal-bdle} allows us to make the following definition:
\begin{definition}
A contact symplectic lift $(M, V, v, S, \omega)$ is  \emph{hyperbolic} if $S$ is Kobayashi hyperbolic, or, equivalently, if $(M,V)$ is contact-hyperbolic.
\end{definition}
\begin{remark}
If \((M,V)\to (M',V')\) is a contact morphism and \((M',V')\) is contact-hyperbolic then \((M,V)\) is contact-hyperbolic.
\end{remark}
\subsection{Complete hyperbolic contact symplectic lifts}
In this section we compare completeness of the contact-hyperbolic distance with completeness of the Kobayashi hyperbolic distance on the symplectic quotient.

\begin{lemma}\label{prop:lift.chains}
Take a contact symplectic lift \((M, V, v, S, \omega)\).
Denote by \(k_S\) the Kobayashi pseudodistance of \(S\).
Take two points \(\bar p,\bar q\in S\) and a point \(p\in M\) so that \(\pi(p)=\bar p\).
Then
\begin{align*}
k_S(\bar p,\bar q)
&=
\inf\set[\tilde k_V(p,q)]{\pi(q)=\bar q},\\
&=
\inf\set[k_V(p,q)]{\pi(q)=\bar q}.
\end{align*}
In particular, the values of these infima are the same for all points \(p\in M\) projecting to \(\bar p\).
If the contact symplectic lift is hyperbolic, then the infima are minima.
\end{lemma}
\begin{proof}
Using finitely many local trivialisations, any path in \(S\) from \(\bar p\) to \(\bar q\) lifts to a unique  $V$-path in \(M\) from \(p\) to some point \(q\) with \(\pi(q)=\bar q\).
By Lemma~\vref{lemma:kV.eq.kS}, these paths have the same length.
Conversely, every $V$-path in \(M\) from \(p\) to some point \(q\) with \(\pi(q)=\bar q\) projects to a path in \(S\) from \(\bar p\) to \(\bar q\), of the same length.
%
Suppose that the contact symplectic lift is hyperbolic.
The distance gives the manifold topology of \(M\), so balls are compact.
As we make paths in \(S\) whose length approaches the distance between the end points, \(q\) stays in a compact set in the fiber.
\end{proof}

Given a Cauchy sequence \(x_1,x_2,\dots\) in a metric space \((X,d)\), an \emph{acceleration} of the sequence is a subsequence \(y_1=x_{i_1},y_2=x_{i_2},\dots\) so that
\[
d(y_j,y_{j+1})+\dots+d(y_{k-1},y_k)\to 0
\]
as \(j,k\to\infty\).
Any subsequence for which the distance between successive points decays faster than exponentially is an acceleration.
In particular, accelerations exist.

The main result of this subsection is:
\begin{theorem}\label{Thm:complete-lift-hyp}
Let $(M, V, v, S, \omega)$ be a hyperbolic contact symplectic lift. Then $(M,V)$ is complete contact-hyperbolic (that is, $k_V$ is a complete distance on $M$) if and only if $S$ is Kobayashi complete hyperbolic.
\end{theorem}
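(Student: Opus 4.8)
The plan is to prove both implications by chasing Cauchy sequences, exploiting that $S$ is already Kobayashi hyperbolic and $M$ is $V$-hyperbolic, so that $k_V$ and $k_S$ induce the respective manifold topologies by Proposition~\ref{prop:same-top}. The two comparison estimates I will lean on are the non-expansiveness of $\pi$ recorded in \eqref{Eq:kappaSV}, giving $\ell_S(\pi\circ\gamma)\le\ell_V(\gamma)$ and $k_S(\pi(p),\pi(q))\le k_V(p,q)$, and the chain-lifting estimate of Lemma~\ref{prop:lift.chains}, which lets me lift short chains in $S$ to short $V$-chains in $M$.

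For the implication ``$M$ complete $V$-hyperbolic $\Rightarrow$ $S$ complete'', I start from a $k_S$-Cauchy sequence $\{s_n\}$ in $S$ and, after passing to a subsequence, arrange $k_S(s_n,s_{n+1})<2^{-n}$. Fixing $p_1\in\pi^{-1}(s_1)$ and applying Lemma~\ref{prop:lift.chains} repeatedly (with $\varepsilon=2^{-n}$), I build $p_n\in\pi^{-1}(s_n)$ with $k_V(p_n,p_{n+1})<2^{-n+1}$; the telescoping bound makes $\{p_n\}$ a $k_V$-Cauchy sequence, hence convergent to some $p^\ast$ by completeness of $M$. Continuity of $\pi$ then gives $s_n=\pi(p_n)\to\pi(p^\ast)$, so this subsequence converges and therefore the original Cauchy sequence does as well. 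Thus $S$ is complete hyperbolic.

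For the converse, the essential work is to show that a $k_V$-Cauchy sequence $\{p_n\}$ converges. By \eqref{Eq:kappaSV} its projection $\{s_n=\pi(p_n)\}$ is $k_S$-Cauchy, hence converges to some $s^\ast\in S$ by completeness of $S$. I then localize around $s^\ast$ using Lemma~\ref{Lem:contact-sympl-coordinates}: on a Stein neighborhood I identify $\pi^{-1}(S_0)\cong S_0\times\C$ with $s^\ast=0$, coordinates $(z,w,y)$, $\pi(z,w,y)=(z,w)$, and contact form $\xi=dy-\sum_j z_j\,dw_j$, arranging $\B\subseteq S_0$. Writing $p_n=(z_n,w_n,y_n)$ with $(z_n,w_n)\to 0$, it remains to prove that $\{y_n\}$ converges. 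Given small $\varepsilon$, I pick $N$ with $s_n\in\B_{r/2}$ and $k_V(p_n,p_m)<\varepsilon$ for $n,m\ge N$, and choose a $V$-path $\gamma$ from $p_n$ to $p_m$ of $V$-length $<\varepsilon$. Since $\ell_S(\pi\circ\gamma)\le\ell_V(\gamma)<\varepsilon$ and $k_S$ induces the manifold topology, for $\varepsilon$ small (uniformly on $\bar\B_{r/2}$ by compactness) the projected path $\pi\circ\gamma$ stays in $\B_r$, so $\gamma$ lies in $\pi^{-1}(\B_r)$. Tangency to $V=\ker\xi$ forces $y'(t)=\sum_j z_j(t)\,w_j'(t)$ almost everywhere, whence $|y_m-y_n|\le r\int_0^1|w'(t)|\,dt\le r\,L_E(\pi\circ\gamma)$, where $L_E$ denotes Euclidean length. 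Because $S$ is complete hyperbolic, hence taut, $\kappa_S$ is continuous and positive \cite{AbateTaut,Kob}, so on the compact sphere bundle over $\bar\B_r$ it is bounded below by some $c>0$; therefore $L_E(\pi\circ\gamma)\le c^{-1}\ell_S(\pi\circ\gamma)\le c^{-1}\ell_V(\gamma)$ and $|y_m-y_n|\le (r/c)\varepsilon$. Thus $\{y_n\}$ is Cauchy, $p_n\to(0,0,y^\ast)=:p^\ast$ in $M$, and by Proposition~\ref{prop:same-top} this is $k_V$-convergence, so $M$ is complete $V$-hyperbolic.

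I expect the second direction to be the main obstacle, specifically controlling the vertical coordinate. Two points on a common Reeb fibre are separated vertically only through the drift $dy=\sum_j z_j\,dw_j$ generated by horizontal motion, so one must simultaneously confine the connecting paths to a single chart (using hyperbolicity and topology-matching of $S$) and convert the intrinsic $V$-length into a genuine Euclidean lower bound on the horizontal displacement (using the lower bound on $\kappa_S$ afforded by tautness). The delicate points are the uniform confinement and this metric lower bound; the drift estimate $y(1)-y(0)=\int_0^1\sum_j z_j\,w_j'\,dt$ is the short computation already appearing in the proof of Theorem~\ref{Thm:Reeb-principal-bdle}.
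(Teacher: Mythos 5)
Your proof is correct, and in the harder direction it takes a genuinely different route from the paper's. In the direction ``$M$ complete $\Rightarrow$ $S$ complete'' you do essentially what the paper does (lift Cauchy sequences via Lemma~\ref{prop:lift.chains}); your successive-lifting with $\varepsilon=2^{-n}$ and telescoping is in fact a cleaner implementation than the paper's one-line invocation of that lemma, which asserts a simultaneous pairwise bound that the lemma, as stated, only gives one base point at a time. In the converse direction both arguments project the Cauchy sequence, localize around the limit via Lemma~\ref{Lem:contact-sympl-coordinates}, and reduce to showing the vertical coordinates $y_n$ form a Cauchy sequence, but the key estimate differs. The paper confines the (Reeb-flow-normalized) connecting curves inside a compact $k_V$-ball of $M$, embeds that ball in a bounded domain $W_T\subset\C^{2N+1}$, and uses a localization of $V$-discs to get a Euclidean lower bound $\kappa_V(q;u)\ge C\|u\|$ there, so that $|y_n-y_m|\le C^{-1}\ell_V$. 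You instead exploit the drift identity $y'=\sum_j z_jw_j'$, which makes the vertical displacement depend only on the \emph{projected} path, and then bound the Euclidean length of that projection by $c^{-1}\ell_S(\pi\circ\gamma)\le c^{-1}\ell_V(\gamma)$ using a Euclidean lower bound for $\kappa_S$ on $\overline{\B}_r$ (complete hyperbolic $\Rightarrow$ taut $\Rightarrow$ $\kappa_S$ continuous and positive; alternatively Royden's local positivity criterion, valid for any hyperbolic $S$). This buys you two simplifications: you never need to confine curves in the $y$-direction (the chart is globally $S_0\times\C$), so the Reeb-flow normalization and the auxiliary domain $W_T$ disappear; and the lower bound you need lives on $S$, where classical Kobayashi theory (tautness, continuity of $\kappa_S$, Royden's theorem identifying $k_S$ with the integrated distance, which you use implicitly when bounding $k_S$ by $\ell_S$ in the confinement step) is available, whereas the paper must build the analogous bound for the sub-Finsler metric $\kappa_V$ by hand. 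The price is that your argument leans on that classical machinery for $S$, while the paper's is more self-contained within its own localization lemmas.
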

\begin{proof}
Suppose that $(M,V)$ is complete contact-hyperbolic.
Take a Cauchy sequence $\{\bar p_n\}\subset S$.
Replace by an acceleration.
Select a path $\bar\gamma_j$ joining $\bar p_j$ to $\bar p_{j+1}$, parameterised by arc length, whose $V$-length is only slightly larger than the distance.
Concatenating the paths together, we get a path \(\bar\gamma\) passing through all of the points, so that the piece of that path from $p_j$ to $p_k$ has length vanishing when $j,k\to\infty$.
Pick a point \(p_1\in M\) above \(\bar p_1\).
The path \(\bar\gamma\) lifts to a path \(\gamma\), of the same length, starting at \(p_1\).
Successive points \(p_j,p_{j+1}\) have distance no smaller than the length of \(\bar\gamma_j\), so not much larger than distance from \(\bar p_j\) to \(\bar p_{j+1}\).
By the triangle inequality, the sequence upstairs is Cauchy.
By completeness of \(M\), the sequence upstairs converges; by continuity of the projection, the sequence downstairs converges.


Suppose that $S$ is complete Kobayashi hyperbolic.
Take a Cauchy sequence $\{p_n\}\subset M$.
Replace by an acceleration.
Its image $\{\bar p_n\}\subset S$ converges, say to $\bar p$.
Fix a neighborhood $U$ of $\bar p$; the $\bar p_n$ eventually remain in \(U\).
We can assume that \(U\) is the unit ball in contact symplectic coordinates.
After dropping finitely many points from the sequence, we can assume that all \(\bar p_n\) lie in \(U\).
So the \(p_n\) remain in the preimage of $U$ in $M$.
We can pick $U$ to be a ball in the domain of coordinates, as in the proof of Theorem~\vref{Thm:Reeb-principal-bdle}, centered at \(\bar p\), say
 \(p_n=(z_n,w_n,y_n)\) and \(\bar p_n=(z_n,w_n)\to\bar p=(0,0)\).

Pick a $V$-path $\gamma_j$ joining $p_j$ to $p_{j+1}$, parameterised by arc length, whose $V$-length is only slightly larger than the distance.
Concatenating the paths together, we get a path \(\gamma\) so that the portion \(\gamma_{jk}\) of that path from $p_j$ to $p_k$ has length vanishing when $j,k\to\infty$.
Each $V$-path \(\gamma_j\) projects to a path \(\bar\gamma_j\) of the same length in $S$.
As in the proof of Theorem~\vref{Thm:Reeb-principal-bdle}, when we localise in $M$, these short $V$-paths have small differences in $y$-value across their end points.
In the notation of that proof,
\[
|y_k-y_j|\le\frac{\ell_S(\bar\gamma_{jk})}{\alpha}.
\]
Thus, \(\{p_n=(z_n,w_n,y_n)\}\) is Cauchy in the Euclidean metric of the coordinates.

Again we also provide a more complex analytic proof.
We assume that \(U=\B\) and that \(\bar p=(0,0)\).
We first prove a  localisation result of the metric.  Fix $T> 0$ and let  
\[
W_T:=\{(z,w,y)\in \B^{2N}\times \C:  |y|<T\}.
\]
The image of $W_T$ in $M$, through the trivialisation chosen above, is an open subset of $M$. 
Let $\underline{0}=(0,0,0)$.  Since $(M,V)$ is contact-hyperbolic, $k_V$ induces the manifold topology of $M$. Therefore, there exists $R>0$ such that 
\[
B_V(\underline{0}, 4R):=\{q\in Z: k_V(\underline{0}, q)<4R\}\subset\subset W_T.
\]
We claim that there exists $\rho_T>0$ such that for every $V$-disc $\varphi\colon \D\to M$ such that $\varphi(0)\in B_V(\underline{0}, 3R)$ we have $\varphi(\rho_T\D)\subset W_T$. If not, there is a sequence $\{\varphi_n\}$ of $V$-discs such that $\varphi_n(0)\in B_V(\underline{0}, 3R)$ and a sequence $\{\zeta_n\}\subset \D$ converging to $0$ such that $\varphi_n(\zeta_n)\in \partial B_V(\underline{0}, 4R)$. By Lemma~\ref{lem:Schwarz}, we have
\[
k_\D(0, \zeta_n)\geq k_V(\varphi_n(0), \varphi_n(\zeta_n))\geq k_V(\overline{B_V(\underline{0}, 3R)}, \partial B_V(\underline{0}, 4R))=c>0,
\]
a contradiction. Thus, for every $q\in B_V(\underline{0}, 3R)$, every $V$-disc $\varphi\colon \D\to M$ such that $\varphi(0)=q$ has the property that $\varphi(\rho_T\D)\subset W_T$. For every $q\in B_V(\underline{0}, 3R)$ and every $u\in V_q$ we have
\begin{equation}\label{Eq:estim-local2}
\kappa_V(q; u)\geq \rho_T \kappa_{V, W_T}(q;u)\geq \rho_T \kappa_{W_T}(q;u).
\end{equation}
Here, $\kappa_{V, W_T}$ denotes the $V$-infinitesimal metric defined by $\zeta$ on $W_T$ and $\kappa_{W_T}$ denotes the Kobayashi infinitesimal metric of $W_T$.  Since $W_T$ is a bounded domain in $\C^{2N+1}$, it is Kobayashi hyperbolic, therefore (see, \eg, \cite[Prop.~2.3.33]{AbateTaut}) there exists a constant $C_0>0$ such that for every $q\in \overline{B_V(\underline{0}, 2R)}$ and for every $u\in V_q$ we have
\[
\kappa_{W_T}(q;u)\geq C_0 \|u\|.
\]
By \eqref{Eq:estim-local2}, taking $C:=\rho_T C_0$ we have
\begin{equation}\label{Eq:stima-from-below}
\kappa_V(q; u)\geq C \|u\|, \quad \hbox{ for all $q\in \overline{B_V(\underline{0}, 2R)}$ and $u\in V_q$}.
\end{equation}

Fix $\epsilon>0$. We are assuming that $\{\bar p_n\}$ converges to $0$ and $B_V(\underline{0}, R)$ is an open neighborhood of $\underline{0}$.
We can assume that, for every $n\in \N$
\begin{equation}\label{Eq:in-ball-cauchy}
\bar p_n\in B_V(\underline{0}, R).
\end{equation}

Also, $\{p_n\}$ is a Cauchy sequence for $k_V$.
Thus there exists $n_0$ such that $k_V(p_n,p_m)<\epsilon$ for every $n\geq m>n_0$. Let $\gamma_{n,m}\colon [0,1]\to Z$ be a piecewise smooth curve tangent almost everywhere to $V$ such that $\gamma_{n,m}(0)=p_m$, $\gamma_{n,m}(1)=p_n$ and 
\[
\ell_V(\gamma_{m,n})\leq k_V(p_n,p_m)+\frac{1}{n}<\epsilon+\frac{1}{n}. 
\]
Let us write $(z_n,w_n, q_n):=p_n$. The Reeb vector field flow preserves the contact structure and the complex structure on $M$.
So it is an isometry of the pseudometric and the pseudodistance.
Let $Q$ be the time $-q_m$ flow of the Reeb vector field, which, in our local coordinates is $(z,w,y)\mapsto (z,w,y-q_m)$.  So $Q$ is an isometry for $k_V$ and $\kappa_V$ and $\tilde  \gamma_{m,n}:=Q\circ \gamma_{m,n}$ is a piecewise smooth curve tangent almost everywhere to $V$. Therefore, 
\[
\ell_V(\tilde \gamma_{m,n})<\epsilon+\frac{1}{n}. 
\]
By \eqref{Eq:in-ball-cauchy}, $\tilde\gamma_{m,n}(0)=(z_m,w_m,0)=(\pi(p_m),0)\in B_V(\underline{0}, R)$.  
Suppose that there is some $r\in (0,1)$ such that $\tilde\gamma_{m,n}(r)\not\in B_V(\underline{0}, 2R)$.
Then 
\[
\ell_V(\tilde \gamma_{m,n})>k_V(\overline{B_V(\underline{0}, R)}, \partial B_V(\underline{0}, 2R))=:c>0,
\]
reaching a contradiction if $\epsilon$ is sufficiently small and $n_0$ sufficiently large. Therefore, $\tilde\gamma_{m,n}([0,1]))\subset B_V(\underline{0}, 2R)$. 

If $q=(z,w,y)\in \B^{2N}\times \C$, we let $q_y:=y$ (the coordinate in the $y$-component). Thus, by \eqref{Eq:stima-from-below}, taking into account that $\tilde\gamma_{m,n}(1)=Q(p_n)$.
Thus, $(\tilde\gamma_{m,n})_y(0)=0$ and $(\tilde\gamma_{m,n})_y(1)=y_n-y_m$:\begin{equation*}
\begin{split}
|y_n-y_m|&\leq \int_0^1 |(\tilde\gamma_{m,n})_y'(t)|dt\leq \int_0^1 \|\tilde\gamma_{m,n}'(t)\|dt\\&\leq \frac{1}{C} \int_0^1\kappa_V(\tilde\gamma_{m,n}(t)); \tilde\gamma_{m,n}'(t))dt=\frac{1}{C}\ell_V(\tilde \gamma_{m,n})<\frac{1}{C}\left(\epsilon+\frac{1}{n}\right).
\end{split}
\end{equation*}
So $\{y_n\}$ is a Cauchy sequence in $\C$, hence convergent.
\end{proof}
\begin{definition}
A hyperbolic contact symplectic lift $(M, V, v, S, \omega)$ is \emph{complete}  when $(M,V)$ is complete contact-hyperbolic or, equivalently, $S$ is Kobayashi complete hyperbolic.
\end{definition}
\begin{corollary}
The pullback of a complete hyperbolic contact symplectic lift by a holomorphic scale symplectic immersion of complete Kobayashi hyperbolic symplectic manifolds is complete hyperbolic.
\end{corollary}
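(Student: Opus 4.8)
The plan is to assemble the pullback as a contact symplectic lift and then read off completeness from the equivalence in Theorem~\ref{Thm:complete-lift-hyp}, so that no new metric analysis is required. Write $\mathcal M'=(M',V',v',S',\omega')$ for the given complete hyperbolic contact symplectic lift, and let $\varphi\colon S\to S'$ be a $\lambda$-scale symplectic immersion with $\lambda\neq 0$, where $S$ (and $S'$) are complete Kobayashi hyperbolic. I would form the pullback $\varphi^*\mathcal M'=(M,V,v,S,\omega)$ with $\omega=\varphi^*\omega'$, exactly as in the definition of the pullback of a contact symplectic lift, so that $M$ sits over $S$ as a principal $\C$-bundle carrying the induced contact data.

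The one point that genuinely needs checking is that $\varphi^*\mathcal M'$ really is a contact symplectic lift and not merely a principal bundle with degenerate curvature. By the $\lambda$-scale symplectic hypothesis, $S$ carries a symplectic form $\omega_S$ with $\varphi^*\omega'=\lambda\,\omega_S$, so
\[
(\varphi^*\omega')^{\dimC S}=\lambda^{\dimC S}\,(\omega_S)^{\dimC S}\neq 0
\]
since $\lambda\neq 0$ and $\omega_S$ is nondegenerate. This is precisely the hypothesis of Proposition~\ref{prop:pullback.symplectic}, which therefore guarantees that $\varphi^*\mathcal M'$ is a contact symplectic lift (and, as a bonus, that the pullback bundle morphism $M\to M'$ is a $1$-Reeb morphism).

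With this in hand the conclusion is immediate. The base of $\varphi^*\mathcal M'$ is $S$, which is complete Kobayashi hyperbolic by hypothesis and in particular Kobayashi hyperbolic; hence $\varphi^*\mathcal M'$ is a hyperbolic contact symplectic lift in the sense of the definition following Theorem~\ref{Thm:Reeb-principal-bdle}. Applying Theorem~\ref{Thm:complete-lift-hyp} to $\varphi^*\mathcal M'$, completeness of the Kobayashi distance on $S$ is equivalent to completeness of $k_V$ on $M$; since $S$ is complete Kobayashi hyperbolic, $M$ is complete $V$-hyperbolic, that is, $\varphi^*\mathcal M'$ is complete hyperbolic.

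Rather than a deep obstacle, the only step requiring care is the nondegeneracy verification feeding Proposition~\ref{prop:pullback.symplectic}: one must use $\lambda\neq 0$ together with the immersion hypothesis to certify that the pulled-back form $\varphi^*\omega'$ is still a holomorphic symplectic form, and hence that the tuple $(M,V,v,S,\omega)$ is honestly a contact symplectic lift. Once that is secured, completeness of the lift is inherited directly from $S$ through Theorem~\ref{Thm:complete-lift-hyp}, so the estimates already established there suffice and nothing further is needed; note in particular that the completeness of the \emph{target} $S'$ enters only through the standing hypotheses and is not itself used in the argument.
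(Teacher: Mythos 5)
Your proof is correct and follows exactly the route the paper intends (the corollary is stated without an explicit proof there): use the scale symplectic hypothesis to verify nondegeneracy of $\varphi^*\omega'$, invoke Proposition~\ref{prop:pullback.symplectic} to conclude the pullback is a contact symplectic lift, and then apply Theorem~\ref{Thm:complete-lift-hyp} with the complete hyperbolicity of the base $S$. Your closing observation that completeness of the target $S'$ is never actually used is also accurate.
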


\section{Contact automorphisms and symplectomorphisms}%
\label{Sec:Lie}
In this section, we relate the automorphism group of a proper Reeb manifold to the automorphism group of its symplectic base manifold.

\begin{lemma}
Suppose that \((S,\omega)\) is a Kobayashi hyperbolic holomorphic symplectic manifold.
Then \(\Aut_{\C\omega}S\) is a Lie group acting smoothly and properly on \(S\).
\end{lemma}
\begin{proof}
Since $S$ is Kobayashi hyperbolic, $\Aut S$ is a Lie group acting smoothly and properly on $S$. 
Clearly, $\Aut_{\C\omega}S$ is a closed subgroup of $\Aut S$.
So it is a Lie subgroup \cite[p.~319 Theorem~9.3.7]{HN2012}.
It is closed, so embedded, so also acts smoothly and properly.
\end{proof}

\begin{theorem}\label{Thm:maps-up-down}
Let $(M, V, v, S, \omega)$ be a hyperbolic contact symplectic lift.
Suppose that $M$ is connected.
Then $M\to S$ is equivariant for a unique morphism of Lie groups $\Aut_V M\to\Aut_{\C\omega} S$.
The kernel of this morphism is a Lie group of real dimension $2$.
It consists precisely of the flow of the Reeb vector field $v$.
We see a short exact sequence of Lie group morphisms
\[
\begin{tikzcd}
1\arrow{r}&\C\arrow{r}&\Aut_V{M}\arrow{r}&\Aut_{\C\omega} S.
\end{tikzcd}
\]
\end{theorem}
\begin{proof}%
By Theorem~\vref{Thm:Riccardo}, \(\Aut_V M\) is a Lie group acting smoothly and properly on $M$.
By Corollary~\vref{cor:lambda}, every automorphism $F\in\Aut_V M$ is a \(\lambda\)-Reeb morphism.
So it is the lift of an associated scale symplectic automorphism \(\hat F\), unique up to Reeb flow.
So \(F\mapsto \hat F\) is a group morphism $\Aut_V M\to\Aut S$, for which $M\to S$ is equivariant.
In the compact-open topology, each element of $\Aut_V M$ acts continuously on $M$ taking Reeb flow lines to one another, so acts continuously on the space of Reeb flow lines: $S=M/\C$.
A continuous Lie group action is smooth, so $\Aut_V M\to \Aut S$ is a smooth Lie group morphism.
By corollary~\vref{cor:autos.id.on.S}, the kernel of this Lie group morphism is the Reeb flow.
\end{proof}
For a contact symplectic lift, this theorem improves the bound on the automorphism group dimension from Proposition~\vref{prop:3bound}:
\begin{corollary}\label{corollary:bound}
Let $(M,V,v,S,\omega)$ be a hyperbolic contact symplectic lift.  
Then
\[
\dimR\Aut_V M\leq2+\dimR\Aut_{\C\omega} S\leq 2+\dimR\Aut S.
\]
In particular, if $S$ has complex dimension $2N$,
\[
\dimR\Aut_V M\leq (2N)^2+4N+2.
\]
\end{corollary}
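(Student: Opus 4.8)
The plan is to extract the entire statement from Theorem~\ref{Thm:maps-up-down}, so that the proof is essentially a dimension count. That theorem provides a morphism of Lie groups $\Psi\colon \Aut_V M\to\Aut_{\C\omega} S$ whose kernel is the flow of the Reeb vector field $v$, a real $2$-dimensional Lie group. First I would apply the rank--nullity identity for the induced Lie algebra morphism $d\Psi_e$ (equivalently, the first isomorphism theorem, noting $\Aut_V M/\ker\Psi\cong\operatorname{im}\Psi$), which gives
\[
\dimR \Aut_V M=\dimR\ker\Psi+\dimR\operatorname{im}\Psi=2+\dimR\operatorname{im}\Psi.
\]

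Next, since $\operatorname{im}\Psi$ is a subgroup of $\Aut_{\C\omega} S$, I would bound $\dimR\operatorname{im}\Psi\le\dimR\Aut_{\C\omega}S$, giving the first inequality $\dimR\Aut_V M\le 2+\dimR\Aut_{\C\omega}S$. Because Theorem~\ref{Thm:maps-up-down} also records that $\Aut_{\C\omega}S$ is a closed Lie subgroup of $\Aut S$, I would then use $\dimR\Aut_{\C\omega}S\le\dimR\Aut S$ to complete the displayed chain of inequalities. For the concluding numerical bound I would invoke the classical estimate for Kobayashi hyperbolic manifolds: if $S$ is Kobayashi hyperbolic of complex dimension $n$, then $\Aut S$ is a real Lie group with $\dimR\Aut S\le n^2+2n$, obtained because the isotropy subgroup at a point is compact and faithfully linearised into a subgroup of $U(n)$ (of real dimension at most $n^2$), while each orbit has real dimension at most $2n=\dimR S$. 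Applying this with $n=\dimC S=2N$ yields $\dimR\Aut S\le(2N)^2+4N$, and hence $\dimR\Aut_V M\le 2+(2N)^2+4N=(2N)^2+4N+2$.

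I do not expect a genuine obstacle: all the substantive work is already done in Theorem~\ref{Thm:maps-up-down} and in the standard hyperbolic automorphism bound (see the results of Isaev cited earlier). The only points demanding care are bookkeeping ones: using the real dimension $2$ of the kernel rather than its complex dimension $1$, and ensuring that the complex dimension feeding the hyperbolic bound is that of the base $S$, namely $2N$, and not the $2N+1$ of the total space $M$.
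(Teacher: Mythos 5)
Your proposal is correct and follows essentially the same route as the paper: the first chain of inequalities is read off from the morphism $\Psi$ of Theorem~\ref{Thm:maps-up-down} (kernel of real dimension $2$, image inside the closed subgroup $\Aut_{\C\omega}S\subseteq\Aut S$), and the numerical bound comes from the classical estimate $\dimR\Aut S\le n^2+2n$ for a Kobayashi hyperbolic manifold of complex dimension $n=2N$, which the paper simply cites from Isaev. Your extra care with rank--nullity and with using $\dimC S=2N$ rather than $\dimC M=2N+1$ is exactly the right bookkeeping, but it is not a different argument.
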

\begin{proof}
The relation between dimensions follows from Theorem~\vref{Thm:maps-up-down}. 
Since $S$ is Kobayashi hyperbolic we have $\dimR \Aut S\leq (2N)^2+4N$ by Isaev~\cite{Isa07}.
\end{proof}
Not every scale symplectic automorphism lifts to a contact automorphism:
\begin{example}
Continuing Example \ref{non-equiv}, the symplectic manifold $(S=\D^\ast \times \D, \omega = dz \wedge dw)$ has nonisomorphic hyperbolic contact symplectic lifts $(S \times \C, \xi_c = dy - (w - \frac{c}{2\pi iz})dz)$ for $c \in \C$.
Consider the biholomorphism $F \in \Aut_{\C\omega} S$:
\[
F(z,w) = \left(-\frac{1}{z},
z^2w
\right),
\]
which acts non-trivially on $H^1(S, \C)$.
We want to prove that \(F\) has no lift to a contact morphism.

Fix $c \neq 0$.
Let $V_c := \ker \xi_c$ be the associated contact distribution. Any lift $G \in \Aut_{V_c} M$ of $F$ has the form
\[
G(z,w,y) = \left(
-\frac{1}{z}, 
z^2w, 
Y(z,w,y)\right),
\]
and satisfies $G^\ast \xi_c = \lambda \xi_c$, where $F^\ast \omega = \lambda \omega$ so $\lambda = 1$. Explicitly, we have
\[
dy - \left(w - \frac{c}{2\pi iz}\right)dz= \xi_c = G^\ast \xi_c = dY - \left(w + \frac{c}{2\pi i z}\right) dz,
\]
so
\[
d(Y-y)=\frac{c}{\pi i z} dz,
\]
but $\frac{c}{\pi i z} dz$ has no globally defined single valued holomorphic primitive on \(S\).
\end{example}

Theorem~\vref{thm:S.S.prime} characterises which scale symplectic maps lift to contact manifold automorphisms; in particular
\begin{theorem}
Suppose that $(S,\omega)$ is a $C^\infty$-exact holomorphic symplectic manifold and that $H^1(S,\C)=0$.
Then 
\[
\Aut_V M\xrightarrow{\Psi}\Aut_{\C\omega} S
\]
is onto.
In other words, we have a short exact sequence of Lie group morphisms
\[
\begin{tikzcd}
1\arrow{r}&\C\arrow{r}&\Aut_V{M}\arrow{r}&\Aut_{\C\omega} S\arrow{r}&1.
\end{tikzcd}
\]
\end{theorem}
\begin{remark}\label{Rem:trivial-case}
When $\omega$ is holomorphically exact and $H^1(S, \C)=0$, we can write the lift explicitly.
By Corollary~\ref{Cor:unique-contact}, $M=S\times \C$, $\pi\colon S\times \C\to S$ is the projection on the first factor, $v=\partial_y$ and the contact form is
\[
\xi=d y+\pi^\ast\nu,
\]
where $\nu$ is a holomorphic $1$-form on $S$ such that $d\nu=\omega$.  Let $F\in \Aut_{\C\omega} S$ be such that $F^\ast \omega=\lambda \omega$, for some $\lambda\in\C\setminus\{0\}$. 
Since $d(F^\ast\nu-\lambda\nu)=0$ and $H^1(S,\C)=0$, by de~Rham's theorem, $F^\ast\nu-\lambda\nu=dh$ for a smooth function $h$, unique up to a constant. 
Since \(dh\) is a holomorphic \(1\)-form, \(h\) is holomorphic.
Define 
\[
\tilde F\colon S\times\C\to S\times\C, \qquad
\tilde F(\bar p,y):=(F(\bar p), \lambda y-h(\bar p)), \quad (\bar p,y)\in S\times \C.
\]
Check that $\tilde F$ is an automorphism of $S\times\C$ and \(\tilde F^\ast\xi=\lambda \xi\).
So $\tilde F\in \Aut_V M$ and $\Psi(\tilde F)=F$.
\end{remark}

\section{Classification of automorphism groups in low dimension}\label{Sec:class}
In this section we study automorphism groups of hyperbolic Reeb manifolds in low dimension.  
Our goal is to bound the dimension of the automorphism group and to classify the extremal cases.
We have reduced the problem to symplectic geometry on the symplectic quotient.
On the quotient, we apply Isaev’s classification \cite{Isa07} of Kobayashi hyperbolic complex manifolds with large automorphism groups.
We need to consider which subgroups of Isaev's groups can act as scale symplectic automorphisms, hence we start by looking at invariant forms.

\subsection{Invariant forms}
While Isaev's classification gives us a starting point, we still need to preserve symplectic forms up to scale.
We need the following lemma of classical invariant theory to determine which contact, symplectic, or volume forms are compatible with a given group action.

\begin{lemma} \label{lem:torus}
Let $\Omega \subset \C^N$ be a complete Reinhardt domain.
Let
\[
\mathbb T_n:= \{(z_1, \dots z_N) \mapsto (e^{i \theta_1} z_1, \dots, e^{i \theta_n} z_n, z_{n+1},\ldots, z_N) :\ \theta_1,  \dots \theta_n \in \R \} 
\subset \Aut\Omega,
\]
for some $n$ with $1\leq n\leq N$. 
Let \(\mu\) be a nowhere-vanishing holomorphic volume form.
The following are equivalent:
\begin{itemize}
\item
Every element of $\mathbb T_n$ preserves $\mu$ up to a complex scalar factor.
\item
Every element of $\mathbb T_n$ preserves $\mu$.
\item
The volume form \(\mu\) has the form
\[
\mu=f\,dz_1\wedge\dots\wedge dz_N,
\]
where \(f=f(z_{n+1},\dots,z_N)\) is holomorphic and nowhere vanishing.
\end{itemize}
\end{lemma}
\begin{proof}
The torus preserves \(\mu\) up to a complex scalar factor: for each $T\in \mathbb T_n$, there exists $\lambda_T\in \C^\ast$ such that $T^\ast \mu=\lambda_T \mu$. 
Write
\[
\mu:=f(z_1, \dots, z_N) dz_1 \wedge \dots \wedge dz_N,
\]
for some holomorphic nowhere-vanishing function $f$ on \(\Omega\).
We need to prove that $f$ depends only on $(z_{n+1}, \ldots, z_N)$. In particular, if $n=N$ then $f$ is constant.
Set \(z':=(z_2,\dots,z_N)\); we can expand in a Taylor series \cite{Hor90} p.~35:
\begin{equation}\label{Eq:Taylor-exp-volume}
f(z_1,z')=\sum_{j=0}^\infty f_j(z') z_1^j,
\end{equation}
where $f_j$ are holomorphic.
Since $f$ vanishes nowhere, $f_0$ vanishes nowhere. 
For $T_\theta(z_1,z'):=(e^{i\theta}z_1, z')$, $T_\theta^*\mu=\lambda_{\theta}\mu$ expands to \(f(e^{i\theta}z_1,z')e^{i\theta}=\lambda_\theta f(z_1,z')\).
Matching up Taylor coefficients, $\lambda_\theta=e^{i\theta}$ and $f_j(z')=0$ for $j\ge 1$.
So $f(z_1,z')=f_0(z')$; repeat this argument for $z_2,\ldots, z_n$.
\end{proof}
\subsection{Subgroups of semisimple groups}
For our classification result, we need to apply some elementary Lie group theory to automorphism groups:
\begin{theorem}\label{thm:tilted.subgroups}
Let $G$ and $G'$ be simple Lie groups. Every connected codimension-1 subgroup of $G \times G'$ is of the form $N \times G'$ or $G \times N'$ for a connected codimension-1 subgroup $N \subset G$ or $N' \subset G'$. 
\end{theorem}
\begin{proof}
Let $L \subset G \times G'$ be a codimension-1 subgroup.
Let $\LieL$ be its Lie algebra.
Take $\alpha \in T_1 (G \times G')^\ast$ such that $\LieL = \ker \alpha$.
Extend $\alpha$ to a left-invariant form, still denoted by $\alpha$, on $G \times G'$.

The left cosets of $L$ form a foliation of $G \times G'$ with tangent distribution $\ker \alpha$. By the Frobenius theorem, $\alpha \wedge d\alpha = 0$.

Decomposing $T (G \times G') = TG \times TG'$, write $\alpha = \alpha_0 + \alpha_0'$ for two left-invariant forms $\alpha_0$ and $\alpha_0'$ on $G$ and $G'$ respectively. Assuming that $L$ is \emph{not} of the form $N \times G'$ or $G \times N'$, neither $\alpha_0$ nor $\alpha_0'$ vanishes.

Extend  $\{\alpha_0\}$ and $\{\alpha_0'\}$ to bases of left-invariant forms $\{\alpha_0, \alpha_i\}$ and $\{\alpha_0',\alpha_a'\}$ on $G$ and $G'$. By left invariance,
\begin{align*}
d\alpha^0&=c^0_{0j}\alpha_0\wedge\alpha_j+c^0_{ij}\alpha_i\wedge\alpha_j,\\
d\alpha_0'&=c^{\prime 0}_{0b}\alpha_{0}'\wedge\alpha_{b}'+c^{\prime 0}_{ab}\alpha_{a}'\wedge\alpha_b',\\
\end{align*}
for structure constants 
\(c^{\dotIndex}_{\dotIndex\dotIndex}\), \(c^{\prime \dotIndex}_{\dotIndex\dotIndex}\).
Here we use the convention of summation over distinct indices.

This gives
\begin{align*}
0
&=\alpha\wedge d\alpha,
\\
&=
\alpha_0\wedge d\alpha_0
+
\alpha_0'\wedge d\alpha_0
+\alpha_0\wedge d\alpha_0'
+\alpha_0'\wedge d\alpha_0',
\\
&=
c^0_{ij}\alpha_0\wedge\alpha_i\wedge\alpha_j
+c^{\prime 0}_{0b}\alpha_0\wedge\alpha_0'\wedge\alpha_b'
+c^{\prime 0}_{ab}\alpha_0\wedge\alpha_a'\wedge\alpha_b'\\
&\qquad+c^0_{0i}\alpha_0'\wedge\alpha_0\wedge\alpha_i+ 
c^0_{ij}\alpha_0'\wedge\alpha_i\wedge\alpha_j+ 
c^{\prime 0}_{ab}\alpha_0'\wedge\alpha_a'\wedge\alpha_b'. 
\end{align*}

By linear independence of these forms, each term vanishes separately, ensuring that \(d\alpha_0=0\) and $d\alpha_0'=0$.

Let $\{v_0,v_i \}$ be the dual basis of $\{\alpha_0, \alpha_i\}$ and let $\LieN$ be the kernel of $\alpha_0$, spanned by $\{v_i\}$ (excluding $v_0$):
\begin{align*}
\alpha_0([v_i,v_j]) &= v_i (\alpha_0 (v_j)) - v_j (\alpha_0 (v_i)) - d\alpha_0 (v_i, v_j) = 0,
\\
\alpha_0([v_0,v_j]) &= v_0 (\alpha_0 (v_j)) - v_j (\alpha_0 (v_0)) - d\alpha_0 (v_0, v_j) = 0,
\end{align*}
Hence $\LieN$ is a Lie subalgebra and an ideal in the Lie algebra $\LieG$ of $G$. This contradicts the assumption that $G$ is a simple group.
\end{proof}

Fabio Podest\`a provided the authors with a second proof, which gives a stronger result, and which we detail here.
We first require a lemma.
\begin{lemma}\label{lemma:normal.in.closure}
Every connected immersed Lie subgroup of a Lie group is normal in its closure.
\end{lemma}
\begin{proof}
Take a connected immersed Lie subgroup \(H\subset G\).
Under the adjoint \(H\)-action, the Lie algebra of \(H\) is invariant, and is a closed linear subspace in the Lie algebra of \(G\).
Hence the Lie algebra of \(H\) is adjoint invariant under the closure \(\bar H\).
Exponentiating, \(H\) is invariant under conjugation by \(\bar H\).
\end{proof}

Podest\`a then gives an improvement of theorem~\vref{thm:tilted.subgroups}:
\begin{theorem}\label{thm:tilted.subgroups.2}
Let $G$ and $G'$ be simple Lie groups. 
Every connected codimension-1 subgroup of $G \times G'$ is of the form $N \times G'$ or $G \times N'$ for a connected closed codimension-1 subgroup\ $N \subset G$ or $N' \subset G'$. 
\end{theorem}
\begin{proof}
Suppose that \(L\subset G\times G'\) is a codimension-1 connected subgroup.
The closure \(\bar L\) is a Lie subgroup.
Either \(\dimR\bar L = \dimR L\) or \(\bar L\) is the identity component of \(G\times G'\). 
By lemma~\vref{lemma:normal.in.closure}, \(L\) is normal in its closure \(\bar L\).

Suppose that \(\bar L=G\times G'\).
So \(L\) is normal in \(G\times G'\).
The projection of \(L\) to either factor \(G,G'\) is onto or trivial.
If trivial, \(L\) lies in the other factor.
Simple Lie groups have dimension at least \(3\).
So \(L\) has codimension larger than \(1\), a contradiction.
So \(L\) projects onto both factors.
The kernels of the two projections are normal subgroups of \(G',G\) respectively, of dimension equal to \(G',G\) respectively.
So \(G,G'\subseteq L\), hence \(G\times G'\subseteq L\), a contradiction since \(L\) is codimension-\(1\).

So \(L = \bar L\) is a closed subgroup. 
The quotient manifold \(X:=(G\times G')/L\) is 1-dimensional. 
By a classical result of Sophus Lie \cite{Br91} p.~61 Exercise~11, the maximal dimensional algebra of vector fields any \(1\)-dimensional manifold is \(\mathfrak{sl}(2,\R)\).
Indeed, every action of a connected Lie group on a connected \(1\)-dimensional manifold \(X\) factors through the action of the group \(\PSL{2,\R}\) of M\"obius transformations on the real projective line \(\RP{1}\), for an equivariant immersion \(X\to\RP{1}\) with image an invariant open set.
Therefore \(G,G'\) have homomorphisms to \(\PSL{2,\R}\), either onto or trivial since \(\PSL{2,\R}\) is a simple group.
Their images commute, forcing one or the other to be mapped to the identity.
So one of \(G\) or \(G'\) lies inside \(L\); suppose it is \(G\).
For every element \((g,g')\in L\), \((g^{-1},1)\in L\) so \((g,1),(1,g')\in L\), so \(L\) is a product \(L=G\times N\).
\end{proof}

\subsection{Symplectic automorphisms of the bidisc}
We apply Lemma~\vref{thm:tilted.subgroups} to rule out certain group actions:
\begin{corollary}\label{cor:no.5}
No $5$-dimensional Lie group acting faithfully on the bidisc preserves any holomorphic symplectic form, even up to a constant factor.
\end{corollary}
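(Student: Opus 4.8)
The plan is to use the structure of $\Aut\D^2$ together with Theorem~\ref{thm:tilted.subgroups} to force any such group to contain a full copy of $PSL_2\R$ acting as the whole automorphism group of one of the two disc factors, and then to rule out an invariant volume form already for this sub-action. Recall that the identity component of $\Aut\D^2$ is $PSL_2\R\times PSL_2\R$, of real dimension $6$, acting coordinatewise on $(z,w)\in\D^2$. A faithful action of a $5$-dimensional Lie group $H$ yields an injective homomorphism $H\to\Aut\D^2$, whose image is a $5$-dimensional, hence codimension-one, immersed subgroup; its Lie algebra $\LieL$ is a codimension-one subalgebra of $\mathfrak{sl}(2,\R)\oplus\mathfrak{sl}(2,\R)$.

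First I would invoke the Lie-algebra content of Theorem~\ref{thm:tilted.subgroups} with $G=G'=PSL_2\R$: the Frobenius argument there shows that $\LieL$ must contain one of the two simple ideals, say $\{0\}\oplus\mathfrak{sl}(2,\R)$ (the other case being symmetric). Integrating this ideal, the identity component of $H$ contains the subgroup $\{\mathrm{id}\}\times\Aut\D$, acting by $(z,w)\mapsto(z,\phi(w))$ for $\phi$ ranging over all of $\Aut\D$. Now suppose, for contradiction, that $H$ preserves a holomorphic symplectic form $\omega$ up to a constant factor. On the surface $\D^2$ every holomorphic $2$-form is closed, so $\omega=f\,dz\wedge dw$ for a nowhere-vanishing $f\in\mathcal O^\ast(\D^2)$, and each $\Phi\in H$ satisfies $\Phi^\ast\omega=\lambda_\Phi\,\omega$ with $\lambda_\Phi\in\C^\ast$ a constant. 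Applying this to $\Phi(z,w)=(z,\phi(w))$ with $\phi\in\Aut\D$ gives
\[
f(z,\phi(w))\,\phi'(w)=\lambda_\phi\,f(z,w),\qquad (z,w)\in\D^2.
\]
Fixing $z_0\in\D$ and setting $h:=f(z_0,\cdot)\in\mathcal O^\ast(\D)$, the one-variable form $\mu:=h\,dw$ on $\D$ then satisfies $\phi^\ast\mu=\lambda_\phi\,\mu$ for every $\phi\in\Aut\D$.

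Finally I would extract the contradiction from this one-variable statement. Restricting to the rotations $\phi_\theta(w)=e^{i\theta}w\in\Aut\D$ and applying Lemma~\ref{lem:torus} with $N=n=1$ and $\Omega=\D$ forces $h$ to be a nonzero constant $c$. But then, for any non-rotational element $\phi\in\Aut\D$, say $\phi(w)=(w+a)/(1+\bar a w)$ with $0<|a|<1$, the relation $c\,\phi'(w)=\lambda_\phi\,c$ would make $\phi'$ constant, which is false since $\phi'(w)=(1-|a|^2)/(1+\bar a w)^2$. This contradiction completes the argument. The main obstacle is conceptual rather than computational: it is precisely the observation, supplied by Theorem~\ref{thm:tilted.subgroups}, that codimension one forces an entire simple factor---and hence the full automorphism group of one disc---to act; once this is in hand, the rigidity of $\Aut\D$ encoded in Lemma~\ref{lem:torus} eliminates every invariant holomorphic volume form.
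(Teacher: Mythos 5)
Your proof is correct and follows essentially the same route as the paper: Theorem~\ref{thm:tilted.subgroups} forces the group to contain a full $PSL_2\R$ factor acting as the complete automorphism group of one disc, and then an elementary one-variable computation rules out any $2$-form invariant up to scale. The only difference is cosmetic---you work in the disc model using rotations (via Lemma~\ref{lem:torus}) plus a non-rotational M\"obius map, while the paper works on $\mathbb{H}\times\mathbb{H}$ using real translations and the inversion $w\mapsto -1/w$.
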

\begin{proof}
Apply Lemma~\vref{thm:tilted.subgroups} with $G = G' = PSL_2\R$.
Up to swapping factors, the group has identity component \(N\times PSL_2\R\), with \(N \subset PSL_2\R \) a connected subgroup. Consider the standard action on the product of upper-half planes $\mathbb{H} \times \mathbb{H}$.
Any holomorphic \(2\)-form \(\omega=f(z,w)dz\wedge dw\) on $\mathbb{H} \times \mathbb{H}$ scaled by translations has
\[
f(z,w+b) = \lambda_b f(z,w),
\]
for all $b \in \R$. 
Differentiate in $b$ to find $f(z,w)=f(z)$.
But then \(w\mapsto -1/w\) is in $PSL_2\R$ and does not preserve \(\omega\) even up to scale.
\end{proof}

\subsection{Three dimensional contact manifolds}
Our main result of this section:
\begin{theorem} \label{thm:classification3}
Let $(M, V, v)$ be a contact-hyperbolic Reeb \(3\)-manifold.
Then
\[
2\leq \dimR \Aut_V M \leq 7. 
\]
If equality holds on the right then the Reeb manifold is Example~\vref{example:big.aut}, up to isomorphism.
\end{theorem}
The bound in Theorem~\ref{thm:classification3} is sharper than the one in Corollary~\vref{corollary:bound}.
\begin{proof}
Since $(M,V)$ is contact-hyperbolic, by Proposition~\vref{prop:3bound}, \((M,V,v)\) is a proper Reeb manifold.
By Lemma~\vref{lemma:Reeb.to.csl}, it arises in a contact symplectic lift $(M,V,v,S,\omega)$.
By Theorem~\vref{Thm:Reeb-principal-bdle}, since $(M,V)$ is contact-hyperbolic, $S$ is Kobayashi hyperbolic.
By Theorem~\vref{thm:C.infty.exact}, \(\omega\) is \(C^\infty\)-exact.
Let $d_S:= \dimR \Aut_{\C\omega} S$ and  $d_M:=\dimR\Aut_V M$.
Since $\Aut_{\C\omega} S \subseteq \Aut S$, $d_S \leq 8$ \cite{Isa07}.
By Corollary~\vref{corollary:bound} and Theorem~\vref{Thm:maps-up-down}, \(d_M\leq2+d_S\).
By Corollary~\vref{corollary:bound} and Theorem~\vref{Thm:maps-up-down}, $d_M=d_S+2$.
So we need to prove that \(d_S\le 5\) and classify the possibilities for \((S,\omega)\) with \(d_S=5\).
Consider possible values of $d_S$:
\begin{ProofSteps}
\ProofStep{$d_S = 8$.} Up to biholomorphism, $\Aut_{\C\omega} S =  \Aut S$ and $S = \B^2$. By Lemma \ref{lem:torus}, $\omega$ is a constant multiple of $dz \wedge dw$; but $\Aut \B^2$ does not preserve this form, even up to scale.
\ProofStep{$d_S = 7$.} No $7$-dimensional group acts properly on a complex surface \cite{Isa08}.
\ProofStep{$d_S = 6$.} By Isaev \cite{Isa08}, up to biholomorphism, $S = \D^2$, and the connected component of the identity in $\Aut_{\C\omega} S$ is $\Aut \D \times \Aut \D$. Once again using Lemma \ref{lem:torus}, we see that the form $\omega$ is a multiple of the standard volume form.
Since $\Aut \D \times \Aut \D$ does not preserve it, we reach another contradiction.
Therefore \(\dimR\Aut_V M\le 7\).
\ProofStep{$d_S = 5$.} So $\dimR \Aut S \geq 5$, so $S= \B^2$ or $\D^2$ \cite{Isa07}. By Corollary~\vref{cor:no.5}, $S\ne\D^2$ so $S=\B^2$.

The group $\Aut_{\C\omega} S$ acts transitively \cite{Isa08}, so by \cite[Lemma A.1]{BIM16} it contains a closed, simply connected, solvable subgroup $H$ acting freely and transitively on $\B^2$. In particular, $\dimR H=4$.
The Lie algebra $\mathfrak{su}(2,1)$ of $\Aut \B^2 = \PSU(2,1)$ admits precisely two solvable Lie subalgebras of dimension 4 up to conjugacy \cite[p.~25 Theorem~6.6, p.~28, Table~11]{DouGraaf25X}.
We now describe the corresponding groups. Both arise inside the 5-dimensional group $G'$ of automorphisms of the Siegel model \(S'=\Ha^2\) of the ball
\[
\Ha^2=\set[(Z,W) \in \C^2]{\Re{Z}>|W|^2},
\] 
fixing the point at infinity, which are precisely \cite{BCD07} those of the form
\[
(Z,W)\mapsto (|a|^2 Z + 2 \bar{s}a W + i c +|s|^2, a W + s) \text { for } a \in \C^\ast, c \in \mathbb{R}, s \in \C.
\]
Let $H'\subseteq G'$ be the subgroup corresponding to $H$ under the Cayley transform.
The subgroup given by $|a|=1$ is not simply connected, so up to conjugation, $H'$ is the subgroup of transformations
\[
(Z,W)\mapsto (a^2 Z + 2 \bar{s}a W + i c +|s|^2, a W + s) \text { for } a>0, c \in \mathbb{R}, s \in \C.
\]
In particular, $H'$ contains all \emph{parabolic transformations} $F_s(Z,W)=(Z + 2 \bar{s}W+|s|^2, W + s)$ for $s \in \C$. 
Some symplectic form $\omega'$ on $S'$ corresponds to $\omega$ on $S$ under the Cayley transform.
Write $\omega' = f(Z,W) dW \wedge dZ$ for some holomorphic function $f$ on $\Ha^2$.
But $\omega'$ is invariant up to scale for this set of transformations:
\[
\lambda(s) f(Z,W) dZ \wedge dW =  f (F_s(Z,W)) dZ \wedge dW, \ \lambda(s) \in \C^\ast.
\] 
Differentiating with respect to $s$ and $\bar{s}$ and evaluating at $s=0$:
\begin{align*}
\partial_s\lambda(0) f &= \partial_W f \\
\partial_{\bar s} \lambda(0) f &= 2W \partial_z f.
\end{align*}
The only solution is $f$ constant; up to scale $\omega' = dZ \wedge dW$.

Since $G'$ is connected, 5-dimensional and preserves $\C\omega'$, it is the connected component of the identity in $\Aut_{\C\omega'}S'$. We show it is the full group.
The Cayley transform \(S=\B^2\to S'=\Ha^2\) is
\[
(z,w)\mapsto (Z,W):=\left(\frac{1+z}{1-z}, \frac{w}{1-z} \right).
\]
It pulls $\omega'=dZ\wedge dW$ back to $\omega:=\frac{2}{(1-z)^3} dz \wedge dw$.
It identifies $G'$ with the closed subgroup $G\subseteq\Aut B^2$ fixing \((1,0)\).
(Every automorphism of $\B^2$ extends homeomorphically through $\partial\B^2$ \cite[Corollary 2.2.2]{AbateTaut}.) 

Suppose that $F \in \Aut\B^2$ satisfies  $F^\ast \tilde \omega = \lambda \tilde \omega$,  $\lambda\in\C\setminus\{0\}$. 
Write $F=(F_1, F_2)$. So
\[
\frac{2}{(1-F_1(z,w))^3} \det (dF_{(z,w)}) dz\wedge dw=F^\ast \tilde \omega=\lambda \tilde \omega=\frac{2\lambda}{(1-z)^3}dz\wedge dw,
\] 
\ie
\begin{equation}\label{Eq:final-form-FB}
|1-F_1(z,w)|^3=\frac{|\det (dF_{(z,w)})|}{|\lambda|} |1-z|^3.
\end{equation}
Since $|\det dF|$ stays bounded near \((1,0)\), the right hand side of \ref{Eq:final-form-FB} tends to $0$ as $(z,w)\to (1,0)$, forcing $F_1(1,0)=1$, hence $F(1,0)=(1,0)$ and $F\in\tilde G$.
\end{ProofSteps}
\end{proof}

\subsection{Three dimensional examples}
Theorem~\ref{thm:classification3} shows that, up to isomorphism, the only contact-hyperbolic Reeb $3$-manifold with $d_M:=\dimR \Aut_V M = 7$ is $\B^2 \times \C$ with contact form $dy + \frac{1}{(1-z)^2} dw$. For $d_M = 6$, the action of $\Aut_{\C\omega} S$ on $S$ need not be transitive.
Suppose that it is transitive.
Then $S$ is either the ball $\B^2$ or the bidisc $\D \times \D$ \cite{OR84}.
We cannot pinpoint a unique symplectic structure on these manifolds as we can when \(d_M=7\). We provide examples with $d_M\le 6$.
\begin{example}
For $S := \B^2$ and $M = S \times \C$ equipped with the standard symplectic and contact structure respectively. The group $\Aut_{\C\omega} S$ coincides with the 4-dimensional Lie group $U(2)$, hence $d_M=6$.
\end{example}
\begin{example}
For $S := \D\times\D$, let $\mathbb H:=\{\zeta\in \C: \Re\zeta>0\}$, $S' := \mathbb H \times \D$, $\omega'(\zeta,w):=d\zeta\wedge dw$, $A:=\{h\in \Aut\mathbb H: h(\zeta)=a\zeta+ib: a>0, b\in \R\}$ the closed subgroup of $\Aut\Ha$ fixing $\infty$ and $G':=\{(f,g)\in \Aut S': f\in A, g(w)=e^{i\theta}w, \theta\in \R\}$.
So $G'\subset \Aut_{\C\omega'}S'$ is a closed 3-dimensional subgroup.
The Cayley transform $C(z)=\frac{1+z}{1-z}$ gives a biholomorphism $\Phi: S\ni (z,w)\mapsto (C(z), w)\in S'$ with
\[
\omega:=\Phi^\ast\omega'=\frac{2}{(1-z)^2}dz\wedge dw.
\]
The Cayley transform identifies $G'$ with the group $G\subset\Aut_{\C\omega}S$ of automorphisms $(z,w)\mapsto (f(z), w)$.
Here $f$ is an automorphism of $\D$ fixing $1$.

The group $\Aut S$ has two connected components \cite{AbateTaut}: the identity component consists of the map $(z,w)\mapsto (f(z), g(w))$ with $f, g \in \Aut \D$.
Compose these with the map $(z,w)\mapsto (w,z)$ to construct the other connected component.

The swap does not preserve $\omega$ even up to scale, so $\Aut_{\C\omega}S\) lies in the identity component.
If $F(z,w):=(f(z),g(w))$ is in this group with $F^*\omega=\lambda\omega$,
\[
\frac{2}{(1-f(z))^2} f'(z)g'(w)=\frac{2\lambda}{(1-z)^2}.
\] 
So $g'$ is constant, $g(w)=e^{i\theta} w$, and $f(1)=1$. So $\Aut_{\C\omega}S=G$ and the unique contact symplectic lift of $(S, \omega)$ is contact-hyperbolic with $d_S=3$ so $d_M=5$.
\end{example}

\begin{example}
For $S := \D \times \D$ and $M = S \times \C$, take the standard symplectic and contact structure. The two-dimensional group $\Aut_{\C\omega} S$ includes the inversion $(z,w) \mapsto (w,z)$ and rotations around both the $z$ and $w$ axes, hence $d_S=2$ and $d_M=4$.
\end{example}

\begin{example}
For $S := \D \times \D$, this time equipped with the symplectic form $(1+z)dz \wedge dw$, and let $M = S \times \C$ with the unique contact-symplectic lift. The group $\Aut_{\C\omega} S$ now allows rotations only around the $w$-axis, hence $d_S=1$ and $d_M=3$.
\end{example}

To obtain an example with $d_M=2$, take any Kobayashi hyperbolic surface with trivial automorphism group and lift it to a contact structure.

\subsection{Five dimensional contact manifolds}
\begin{theorem}
Let $(M,V,v)$ be a contact-hyperbolic proper Reeb contact manifold of complex dimension 5. Then $d_M:=\dimR \Aut_V M \leq 19$.
\end{theorem}
Again, this improves the bound of Corollary~\vref{corollary:bound}.
The classification of these with $d_M=19$ is an open problem.
\begin{proof}
By Theorem~\ref{Thm:Riccardo}, $d_M\le 22$.
As in the proof of Theorem~\vref{thm:classification3}, we study $d_S:=\dimR \Aut_{\C\omega} S$.
By Corollary~\vref{corollary:bound} and Theorem~\vref{Thm:maps-up-down}, $d_M=d_S+2$, so $d_S\leq 20$.
\begin{ProofSteps}
\ProofStep{$d_S = 20,19$} No Lie group of dimension $d_S$ can act properly on a Kobayashi hyperbolic complex manifold of complex dimension 4. \cite{Isa08}.
\ProofStep{$d_S = 18$} By \cite{Isa08}, $S = \B^3 \times \D$ and $\Aut_{\C\omega} S$ has identity component $\Aut \B^3 \times \Aut \D$. By Lemma \ref{lem:torus}, $\omega$ is a multiple of the standard volume form, which is not preserved up to scale by $\Aut \B^3 \times \Aut\D$, a contradiction.
\end{ProofSteps}
So $d_S\le 17$ and so $d_M\le 19$.
\end{proof}


\end{document}